\newcommand{\NN}{\mathbb N}
\newcommand{\CC}{\mathbb C}
\newcommand{\RR}{\mathbb R}
\newcommand{\ZZ}{\mathbb Z}
\newcommand{\ds}{\displaystyle}
\newcommand{\EE}{\mathcal E}
\newcommand{\DD}{\mathcal D}
\newcommand{\SSS}{\mathcal S}
\newcommand{\OO}{\mathcal O}
\newcommand{\supp}{\mathrm{supp\,}}
\newcommand{\Op}{\mathrm{Op}}
\newtheorem{theorem} {Theorem}[section]
\newtheorem{proposition}{Proposition}[section]
\newtheorem{lemma}{Lemma}[section]
\newtheorem{definition}{Definition}[section]
\newtheorem{remark}{Remark}[section]
\newtheorem{example}{Example}[section]
\newcommand{\beq}{\begin{eqnarray}}
\newcommand{\eeq}{\end{eqnarray}}
\newcommand{\beqs}{\begin{eqnarray*}}
\newcommand{\eeqs}{\end{eqnarray*}}
\begin{document}

\title{Anti-Wick and Weyl quantization on ultradistribution spaces}

\author{Stevan Pilipovic, Bojan Prangoski}

\date{}
\maketitle

\begin{abstract}
The connection between the Anti-Wick and Weyl quantization is given for certain class of global symbols, which corresponding pseudodifferential operators act continuously on the space of tempered ultradistributions of Beurling, respectively, of Roumieu type. The largest subspace of ultradistributions is found for which the convolution with the gaussian kernel exist. This gives a way to extend the definition of Anti-Wick quantization for symbols that are not necessarily tempered ultradistributions.
\end{abstract}

\section{Introduction}

The Anti-Wick and the Weyl quantization of global symbols, as well as their connection, in the case of Schwartz distributions was vastly studied during the years (see for example \cite{NR} and \cite{Shubin} for a systematic approach to the theory). The importance in studying the Anti-Wick quantization lies in the facts that real valued symbols give rise to formally self-adjoint operators and positive symbols give rise to positive operators. On the other hand the Weyl quantization is important because it is closely connected with the Wigner transform and also, the Weyl quantization of real valued symbol is formally self-adjoint operator.\\
\indent The results that we give here are related to the global symbol classes defined and studied in \cite{BojanS}, which corresponding operators act continuously on the space of tempered ultradistributions of Beurling, resp. Roumieu type.\\
\indent For a symbol $a$ which is an element of the space of tempered (ultra)distributions, its Anti-Wick quantization is equal to the Weyl quantization of a symbol $b$ that is given as the convolution of $a$ and the gaussian kernel $e^{-|\cdot|^2}$. The purpose of this paper is twofold. In the first part we extend results from \cite{NR} (see also \cite{Shubin}) to ultradistributions. More precisely, we give the connection of Anti-Wick and Weyl quantization for symbols belonging to specific symbol classes developed by one of the authors in \cite{BojanS}. The last two sections are devoted to finding the largest subspace of ultradistributions for which the convolution with the gaussian kernel exist. The answer to this question in the case of Schwartz distributions was already given in \cite{Wagner}. This gives a way to extend the definition of Anti-Wick operators with symbols that are not necessarily tempered ultradistributions. In particular, we prove theorem \ref{trb}, which gives such class of symbols.\\
\indent The paper is organized as follows:\\
\indent \textbf{Section 1} contains some basic facts concerning spaces of ultradistribution.\\
\indent In \textbf{Section 2} we recall important results related to the symbol classes and their corresponding pseudodifferential operators defined and studied in \cite{BojanS}.\\
\indent \textbf{Section 3} is devoted to the connection between the Anti-Wick and Weyl quantization of symbols belonging to the mentioned symbol classes.\\
\indent In \textbf{Section 4} we find the largest subspace of ultradistributions for which the convolution with $e^{s|\cdot|^2}$, $s\in\RR\backslash\{0\}$, exist.\\
\indent In \textbf{Section 5} we extend the definition of Anti-Wick operators for symbols that are not necessarily tempered ultradistributions, by using the results obtained in the previous sections.

\section{Preliminaries}

The sets of natural, integer, positive integer, real and complex numbers are denoted by $\NN$, $\ZZ$, $\ZZ_+$, $\RR$, $\CC$. We use the symbols for $x\in \RR^d$: $\langle x\rangle =(1+|x|^2)^{1/2} $,
$D^{\alpha}= D_1^{\alpha_1}\ldots D_n^{\alpha_d},\quad D_j^
{\alpha_j}={i^{-1}}\partial^{\alpha_j}/{\partial x}^{\alpha_j}$, $\alpha=(\alpha_1,\alpha_2,\ldots,\alpha_d)\in\NN^d$. If $z\in\CC^d$, by $z^2$ we will denote $z^2_1+...+z^2_d$. Note that, if $x\in\RR^d$, $x^2=|x|^2$.\\
\indent Following \cite{Komatsu1}, we denote by $M_{p}$ a sequence of positive numbers $M_0=1$ so that:\\
\indent $(M.1)$ $M_{p}^{2} \leq M_{p-1} M_{p+1}, \; \; p \in\ZZ_+$;\\
\indent $(M.2)$ $\ds M_{p} \leq c_0H^{p} \min_{0\leq q\leq p} \{M_{p-q} M_{q}\}$, $p,q\in \NN$, for some $c_0,H\geq1$;\\
\indent $(M.3)$  $\ds\sum^{\infty}_{p=q+1}   \frac{M_{p-1}}{M_{p}}\leq c_0q \frac{M_{q}}{M_{q+1}}$, $q\in \ZZ_+$,\\
although in some assertions we could assume the weaker ones $(M.2)'$ and $(M.3)'$ (see \cite{Komatsu1}). For a multi-index $\alpha\in\NN^d$, $M_{\alpha}$ will mean $M_{|\alpha|}$, $|\alpha|=\alpha_1+...+\alpha_d$. Recall,  $m_p=M_p/M_{p-1}$, $p\in\ZZ_+$ and the associated function for the sequence $M_{p}$ is defined by
\beqs
M(\rho)=\sup  _{p\in\NN}\log_+   \frac{\rho^{p}}{M_{p}} , \; \; \rho > 0.
\eeqs
It is non-negative, continuous, monotonically increasing function, which vanishes for sufficiently small $\rho>0$ and increases more rapidly then $(\ln \rho)^p$ when $\rho$ tends to infinity, for any $p\in\NN$.\\
\indent Let $U\subseteq\RR^d$ be an open set and $K\subset\subset U$ (we will use always this notation for a compact subset of an open set). Then $\EE^{\{M_p\},h}(K)$ is the space of all $\varphi\in \mathcal{C}^{\infty}(U)$ which satisfy $\ds\sup_{\alpha\in\NN^d}\sup_{x\in K}\frac{|D^{\alpha}\varphi(x)|}{h^{\alpha}M_{\alpha}}<\infty$ and $\DD^{\{M_p\},h}_K$ is the space of all $\varphi\in \mathcal{C}^{\infty}\left(\RR^d\right)$ with supports in $K$, which satisfy $\ds\sup_{\alpha\in\NN^d}\sup_{x\in K}\frac{|D^{\alpha}\varphi(x)|}{h^{\alpha}M_{\alpha}}<\infty$;
$$
\EE^{(M_p)}(U)=\lim_{\substack{\longleftarrow\\ K\subset\subset U}}\lim_{\substack{\longleftarrow\\ h\rightarrow 0}} \EE^{\{M_p\},h}(K),\,\,\,\,
\EE^{\{M_p\}}(U)=\lim_{\substack{\longleftarrow\\ K\subset\subset U}}
\lim_{\substack{\longrightarrow\\ h\rightarrow \infty}} \EE^{\{M_p\},h}(K),
$$
\beqs
\DD^{(M_p)}(U)=\lim_{\substack{\longrightarrow\\ K\subset\subset U}}\lim_{\substack{\longleftarrow\\ h\rightarrow 0}} \DD^{\{M_p\},h}_K,\,\,\,\,
\DD^{\{M_p\}}(U)=\lim_{\substack{\longrightarrow\\ K\subset\subset U}}\lim_{\substack{\longrightarrow\\ h\rightarrow \infty}} \DD^{\{M_p\},h}_K.
\eeqs
The spaces of ultradistributions and ultradistributions with compact support of Beurling and Roumieu type are defined as the strong duals of $\DD^{(M_p)}(U)$ and $\EE^{(M_p)}(U)$, resp. $\DD^{\{M_p\}}(U)$ and $\EE^{\{M_p\}}(U)$. For the properties of these spaces, we refer to \cite{Komatsu1}, \cite{Komatsu2} and \cite{Komatsu3}. In the future we will not emphasize the set $U$ when $U=\RR^d$. Also, the common notation for the symbols $(M_{p})$ and $\{M_{p}\} $ will be *.\\
\indent For $f\in L^{1} $, its Fourier transform is defined by
$(\mathcal{F}f)(\xi ) = \int_{{\RR^d}} e^{-ix\xi}f(x)dx$, $\xi \in {\RR^d}$.

By $\mathfrak{R}$ is denoted a set of positive sequences which monotonically increases to infinity. For $(r_p)\in\mathfrak{R}$, consider the sequence $N_0=1$, $N_p=M_p\prod_{j=1}^{p}r_j$, $p\in\ZZ_+$. One easily sees that this sequence satisfies $(M.1)$ and $(M.3)'$ and its associated function will be denoted by $N_{r_p}(\rho)$, i.e. $\ds N_{r_{p}}(\rho )=\sup_{p\in\NN} \log_+ \frac{\rho^{p }}{M_p\prod_{j=1}^{p}r_j}$, $\rho > 0$. Note, for given $(r_{p})$ and every $k > 0 $ there is $\rho _{0} > 0$ such that $\ds N_{r_{p}} (\rho ) \leq M(k \rho )$, for $\rho > \rho _{0}$. In \cite{Komatsu3} it is proven that for each $K\subset\subset \RR^d$, the topology of $\ds \DD_K^{\{M_p\}}=\lim_{\substack{\longrightarrow\\ h\rightarrow\infty}}\DD^{\{M_p\},h}_K$ is generated by the seminorms $\ds p_{(t_j),K}(\varphi)=\sup_{\alpha\in\NN^d}\frac{\left\|D^{\alpha}\varphi\right\|_{L^{\infty}}} {M_{\alpha}\prod_{j=1}^{|\alpha|}t_j}$, where $(t_j)\in\mathfrak{R}$. In \cite{BojanL} the following lemma is proven.
\begin{lemma}\label{pst}
Let $(k_p)\in\mathfrak{R}$. There exists $(k'_p)\in\mathfrak{R}$ such that $k'_p\leq k_p$ and
\beqs
\prod_{j=1}^{p+q}k'_j\leq 2^{p+q}\prod_{j=1}^{p}k'_j\cdot\prod_{j=1}^{q}k'_j, \mbox{ for all }p,q\in\ZZ_+.
\eeqs
\end{lemma}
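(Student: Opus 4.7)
The plan is to construct $(k'_p)$ explicitly as a dyadic step function sitting below $(k_p)$ and to verify the product bound by a double-counting argument on thresholds.

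\emph{Construction.} Since $k_p\to\infty$, I inductively pick positive integers $1=n_0<n_1<n_2<\cdots$ satisfying both $n_{l+1}\ge 2n_l$ and $k_{n_l}\ge 2^l$ for every $l\ge 0$, and set $k'_p:=2^l$ whenever $p\in[n_l,n_{l+1})$. By construction $(k'_p)$ is nondecreasing, tends to infinity, and satisfies $k'_p=2^l\le k_{n_l}\le k_p$ by the monotonicity of $(k_p)$; hence $(k'_p)\in\mathfrak R$ with $k'_p\le k_p$.

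\emph{Reduction to a counting inequality.} After taking $\log_2$, the submultiplicative bound becomes $D(p,q):=\sum_{j=1}^q(\log_2 k'_{p+j}-\log_2 k'_j)\le p+q$. The summand equals the number of thresholds $n_l$ lying in $(j,p+j]$, so reversing the order of summation yields
\[
D(p,q)=\sum_{l\ge 1}N_l,\qquad N_l:=\#\{j\in[1,q]:n_l-p\le j<n_l\}\le\min(p,q,n_l-1,p+q-n_l+1).
\]
Assume $p\le q$ and split the sum over $l$ according as $n_l\le p+1$ (piece $A$), $n_l\in(p+1,q+1]$ (piece $B$), or $n_l\in(q+1,p+q]$ (piece $C$), and write $l_1,l_2$ for the largest indices with $n_{l_1}\le p+1$, $n_{l_2}\le q+1$, and $m:=l_2-l_1$.

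\emph{Estimating $A+B+C$.} The at-least-doubling of $(n_l)$ gives $\sum_{l=0}^{l_1}n_l\le n_{l_1+1}$ and $n_{l_2+1}\ge 2^m n_{l_1+1}$, whence $A\le n_{l_1+1}-1-l_1$, $B=pm$ (the middle range contains exactly $m$ thresholds), and $C\le\max(0,\,p+q-n_{l_2+1}+1)$. Using $n_{l_1+1}>p$ together with the elementary inequality $m+1\le 2^m$ (when $C>0$), respectively $q+1>2^{m-1}p$ combined with $m\le 2^{m-1}$ (when $C=0$), the middle contribution $pm$ is absorbed against the telescoping of $A+C$, yielding
\[
A+B+C\le p+q-l_1\le p+q,
\]
which is the required estimate.

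\emph{Main obstacle.} The delicate point is obtaining the sharp base $2$ on the right hand side of the lemma, as opposed to some strictly larger exponential base. This rests entirely on the at-least-doubling of the threshold sequence $(n_l)$, together with the tight elementary bounds $m+1\le 2^m$ and $m\le 2^{m-1}$ (both attained with equality at small $m$); any weaker growth of $(n_l)$, say $n_{l+1}\ge(1+\epsilon)n_l$ with $\epsilon<1$, would only produce an estimate with a base $C^{p+q}$ for some $C>2$.
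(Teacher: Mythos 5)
Your dyadic step-function construction (thresholds $n_l$ with $n_{l+1}\ge 2n_l$ and $k_{n_l}\ge 2^l$, and $k'_p=2^l$ on $[n_l,n_{l+1})$) and the reduction to the threshold-counting quantity $D(p,q)=\sum_l N_l$ with $N_l\le\min(p,q,n_l-1,p+q-n_l+1)$ are correct; since the paper cites \cite{BojanL} for this lemma rather than proving it, I am evaluating your argument on its own terms. Your estimates do go through in the two cases you address explicitly: when $C>0$, the telescoping $A+C\le n_{l_1+1}-n_{l_2+1}+p+q-l_1$ combined with $n_{l_2+1}\ge 2^m n_{l_1+1}$, $n_{l_1+1}>p$ and $m+1\le 2^m$ absorbs $pm$; and when $C=0$ with $m\ge 1$, the chain $n_{l_1+1}\le n_{l_2}/2^{m-1}\le (q+1)/2^{m-1}$ together with $m\le 2^{m-1}$ works.

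There is, however, a genuine gap in the remaining case $m=0$, $C=0$, i.e.\ when every threshold $\le p+q$ already satisfies $n_l\le p+1$. Then $B=C=0$ and you must show $A\le p+q$, but the only control you derive is $A\le n_{l_1+1}-1-l_1$, and here $n_{l_1+1}=n_{l_2+1}>p+q$, so this bound is strictly \emph{worse} than $p+q$. Concretely, with $n_0=1$, $n_1=2$, $n_2=100$ and $p=q=1$, your bound gives $A\le 98$ while $A=1$; and $m=0$ holds automatically whenever $p=q$, so this is not a degenerate corner. The fix is to use doubling in the other direction: $\sum_{l=1}^{l_1}n_l\le n_{l_1}\bigl(2-2^{-l_1}\bigr)-n_0\le 2n_{l_1}-2$ (the last step because $n_{l_1}\ge 2^{l_1}$), whence $A\le 2n_{l_1}-2-l_1\le 2(p+1)-2-l_1=2p-l_1\le p+q-l_1$, which closes the gap. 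A minor further remark: the construction tacitly assumes $k_1\ge 1$ (you need $k_{n_0}\ge 2^0$ with $n_0=1$), while $\mathfrak{R}$ only requires positivity; this is harmless because the product inequality is invariant under replacing $(k'_p)$ by $(ck'_p)$ for any $c>0$, but it deserves a word.
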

\noindent Hence, for every $(k_p)\in\mathfrak{R}$, we can find $(k'_p)\in\mathfrak{R}$, as in lemma \ref{pst}, such that $N_{k_p}(\rho)\leq N_{k'_p}(\rho)$, $\rho>0$ and the sequence $N_0=1$, $N_p=M_p\prod_{j=1}^{p}k'_j$, $p\in\ZZ_+$, satisfies $(M.2)$ if $M_p$ does.\\
\indent From now on, we always assume that $M_p$ satisfies $(M.1)$, $(M.2)$ and $(M.3)$. It is said that $P(\xi ) =\sum _{\alpha \in \NN^d}c_{\alpha } \xi^{\alpha}$, $\xi \in \RR^d$, is an ultrapolynomial of the class $(M_{p})$, resp. $\{M_{p}\}$, whenever the coefficients $c_{\alpha }$ satisfy the estimate $|c_{\alpha }|  \leq C L^{|\alpha| }/M_{\alpha}$, $\alpha \in \NN^d$ for some $L > 0$ and $C>0$, resp. for every $L > 0 $ and some $C_{L} > 0$. The corresponding operator $P(D)=\sum_{\alpha} c_{\alpha}D^{\alpha}$ is an ultradifferential operator of the class $(M_{p})$, resp. $\{M_{p}\}$ and they act continuously on $\EE^{(M_p)}(U)$ and $\DD^{(M_p)}(U)$, resp. $\EE^{\{M_p\}}(U)$ and $\DD^{\{M_p\}}(U)$ and the corresponding spaces of ultradistributions. In \cite{BojanL} a special class of ultrapolynomials of class * were constructed. We summarize the results obtained there in the following proposition.
\begin{proposition}\label{orn}
Let $c>0$ and $k>0$, resp. $c>0$ and $(k_p)\in\mathfrak{R}$ are arbitrary but fixed. Then there exist $l>0$ and $q\in\ZZ_+$, resp. there exist $(l_p)\in\mathfrak{R}$ and $q\in\ZZ_+$ such that $\ds P_l(z)=\prod_{j=q}^{\infty}\left(1+\frac{z^2}{l^2 m_j^2}\right)$, resp. $\ds P_{l_p}(z)=\prod_{j=q}^{\infty}\left(1+\frac{z^2}{l_j^2 m_j^2}\right)$, is an entire function that doesn't have zeroes on the strip $W=\RR^d+i\{y\in\RR^d||y_j|\leq c,\,j=1,...,d\}$. $P_l(x)$, resp. $P_{l_p}(x)$, is an ultrapolynomial of class *. Moreover $|P_l(z)|\geq \tilde{C}e^{M(|z|/k)}$, resp. $|P_{l_p}(z)|\geq \tilde{C}e^{N_{k_p}(|z|)}$, $z\in W$, for some $\tilde{C}>0$ and $\ds\left|\partial^{\alpha}_x\frac{1}{P_l(x)}\right|\leq C\cdot\frac{\alpha!}{r^{|\alpha|}}e^{-M\left(|x|/k\right)}$, resp. $\ds\left|\partial^{\alpha}_x\frac{1}{P_{l_p}(x)}\right|\leq C\cdot\frac{\alpha!}{r^{|\alpha|}}e^{-N_{k_p}(|x|)}$, $x\in\RR^d$, $\alpha\in\NN^d$, where $C$ depends on $k$ and $l$, resp. $(k_p)$ and $(l_p)$, and $M_p$; $r\leq c$ arbitrary but fixed.
\end{proposition}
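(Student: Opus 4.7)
The plan is to follow the construction used in \cite{BojanL} and verify each of the four assertions in turn. First, exploit the fact that $m_p\to\infty$ (which follows from $(M.1)$ and $(M.3)$) to choose $q\in\ZZ_+$ so that $lm_q>c\sqrt{d}$ in the Beurling case; in the Roumieu case, first replace a given $(k_p)\in\mathfrak{R}$ by a sequence $(k'_p)$ as in Lemma \ref{pst}, then pick $(l_p)\in\mathfrak{R}$ and $q$ with $l_qm_q>c\sqrt{d}$. The convergence of the product on compact subsets is immediate because $z^2/(l^2m_j^2)\to 0$, so $P_l$ and $P_{l_p}$ are entire functions.

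Next, verify that $P_l$ (resp.\ $P_{l_p}$) has no zero on $W$. A zero of the $j$-th factor requires $z^2=-l^2m_j^2$; writing $z=x+iy$ and comparing real and imaginary parts of $z^2=(|x|^2-|y|^2)+2i(x\cdot y)$, one obtains $x\cdot y=0$ and $|y|^2\geq l^2m_j^2$. On $W$ we have $|y|^2\leq dc^2$, so the choice $lm_q>c\sqrt{d}$ (resp.\ $l_qm_q>c\sqrt{d}$) rules out zeros for every $j\geq q$. Expanding the product and estimating the symmetric functions of $1/(l^2m_j^2)$ by repeated use of $(M.1)$ and $(M.2)$ (in the Roumieu case, using the multiplicative property of $(l'_p)$ from Lemma \ref{pst}) yields Taylor coefficients of the ultrapolynomial type, so $P_l$ resp.\ $P_{l_p}$ is an ultrapolynomial of class $*$.

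For the lower bound on $W$, observe that for $z=x+iy\in W$,
\begin{equation*}
\left|1+\frac{z^2}{l^2m_j^2}\right|\geq 1+\frac{|x|^2-|y|^2}{l^2m_j^2}\geq 1+\frac{|z|^2-2dc^2}{l^2m_j^2},
\end{equation*}
since $|z|^2=|x|^2+|y|^2$ and $|y|^2\leq dc^2$. Taking the product over $j\geq q$ and invoking the standard identity relating $\prod_{j=q}^{\infty}(1+\rho^2/(l^2m_j^2))$ to $e^{M(\rho/k)}$ (resp.\ $e^{N_{k_p}(\rho)}$), which is exactly the infinite-product characterization of the associated function in \cite{Komatsu1}, one obtains $|P_l(z)|\geq\tilde{C}e^{M(|z|/k)}$ on $W$ after appropriately linking $k$ and $l$ (resp.\ $(k_p)$ and $(l_p)$). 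The derivative bound then follows from Cauchy's integral formula: since $1/P_l$ is holomorphic on $W$ and $W$ contains the polydisc of radius $r\leq c$ centered at any $x\in\RR^d$, the bound on $|1/P_l|$ from (iii) and $|z|\geq|x|-r\sqrt{d}$ on that polydisc give $|\partial^\alpha(1/P_l)(x)|\leq C\alpha!/r^{|\alpha|}e^{-M(|x|/k)}$, after absorbing the $r\sqrt{d}$ shift into the constant by enlarging $k$ slightly.

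The main obstacle is step (iii): the passage from the complex-strip estimate on $|1+z^2/(l^2m_j^2)|$ to an inequality of the form $e^{M(|z|/k)}$ with the correct parameter. One must track how $l$ is chosen as a function of $k$ so that the infinite-product inequality of Komatsu applies uniformly on $W$. In the Roumieu case the non-trivial point is that the sequence $(l_p)$ must still combine multiplicatively when the product is re-expanded as a series, which is exactly what Lemma \ref{pst} delivers; without that lemma the coefficient estimates for $P_{l_p}$ would fail to give an ultrapolynomial of class $\{M_p\}$.
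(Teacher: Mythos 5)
Note first that the paper does not itself prove Proposition \ref{orn}: it is explicitly presented as a summary of results established in \cite{BojanL}, so there is no in-paper argument to compare against. Your outline nevertheless follows the construction one would expect from \cite{BojanL}, and the overall scheme is correct: pick $q$ so that $lm_q>c\sqrt{d}$ (resp.\ $l_qm_q>c\sqrt d$) to kill zeros on $W$, invoke Komatsu's comparison between $\prod_j(1+\rho^2/m_j^2)$ and $e^{M(\rho)}$ for the lower bound, and apply Cauchy's formula on a polydisc of radius $r\leq c$ for the derivatives of $1/P_l$. The zero-free computation from $z^2=-l^2m_j^2$ is correct, and your use of Lemma \ref{pst} in the Roumieu case is the right idea.

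A few details need tightening, one of which is a genuine slip. The phrase ``absorbing the $r\sqrt{d}$ shift into the constant by enlarging $k$ slightly'' is backwards: $k$ is prescribed and cannot be changed. The correct bookkeeping is to choose $l$ (resp.\ $(l_p)$) so that the lower bound on $W$ holds with a strictly smaller parameter $k'<k$; then on the polydisc one has $(|x|-r\sqrt d)/k'>|x|/k$ once $|x|$ is large enough, so $e^{-M((|x|-r\sqrt d)/k')}\leq Ce^{-M(|x|/k)}$ for all $x$, and this is where the Cauchy estimate closes. Relatedly, your presentation introduces $q$ in terms of an $l$ that is only determined later; the logical order is $k\mapsto l\mapsto q$. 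Also, the displayed chain of inequalities may make $1+(|z|^2-2dc^2)/(l^2m_j^2)$ negative when $|z|^2<2dc^2$, which would invalidate the product bound; either strengthen the choice of $q$ to $lm_q>c\sqrt{2d}$ so every factor stays positive, or treat $|z|$ bounded separately, where both $|P_l(z)|$ and $e^{M(|z|/k)}$ are bounded. Finally, the stated reason for convergence of the infinite product (``$z^2/(l^2m_j^2)\to 0$'') is insufficient; one needs $\sum_j 1/m_j^2<\infty$, which follows from $(M.3)$ and $m_j\to\infty$. These are all routine to repair, and you correctly identify the $l$--$k$ (resp.\ $(l_p)$--$(k_p)$) calibration and the symmetric-function estimates as the substantive technical content that still has to be supplied.
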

\indent We denote by $\SSS^{M_{p},m}_{2} \left(\RR^d\right)$, $m > 0$, the space of all smooth functions $\varphi$ which satisfy
\beqs
\sigma_{m,2}(\varphi ): = \left( \sum_{\alpha,\beta\in\NN^d} \int_{\RR^d} \left|\frac{m^{|\alpha|+|\beta|}\langle x\rangle^{|\alpha|}D^{\beta}\varphi(x)}{M_{\alpha}M_{\beta}}\right| ^{2} dx \right) ^{1/2}<\infty,
\eeqs
supplied with the topology induced by the norm $\sigma _{m,2}$. The spaces $\SSS'^{(M_{p})}$ and $\SSS'^{\{M_{p}\}}$ of tempered ultradistributions of Beurling and Roumieu type respectively, are defined as the strong duals of the spaces $\ds\SSS^{(M_{p})}=\lim_{\substack{\longleftarrow\\ m\rightarrow\infty}}\SSS^{M_{p},m}_{2}\left(\RR^d\right)$ and $\ds\SSS^{\{M_{p}\}}=\lim_{\substack{\longrightarrow\\ m\rightarrow 0}}\SSS^{M_{p},m}_{2}\left(\RR^d\right)$, respectively. In \cite{PilipovicK} (see also \cite{PilipovicU}) it is proved that the sequence of norms $\sigma_{m,2}$, $m > 0$, is equivalent with the sequences of norms $\|\cdot\|_{m}$, $m > 0$, where $\ds \|\varphi\|_m:=\sup_{\alpha\in \NN^d}\frac{m^{|\alpha|}\| D^{\alpha}\varphi(\cdot) e^{M(m|\cdot|)}\|_{L_{\infty}}}{M_{\alpha }}$. If we denote by $\SSS^{M_p,m}_{\infty}\left(\RR^d\right)$ the space of all infinitely differentiable functions on $\RR^d$ for which the norm $\|\cdot\|_m$ is finite (obviously it is a Banach space), then $\ds\SSS^{(M_p)}\left(\RR^d\right)=\lim_{\substack{\longleftarrow\\ m\rightarrow\infty}} \SSS^{M_p,m}_{\infty}\left(\RR^d\right)$ and $\ds\SSS^{\{M_p\}}\left(\RR^d\right)=\lim_{\substack{\longrightarrow\\ m\rightarrow 0}} \SSS^{M_p,m}_{\infty}\left(\RR^d\right)$. Also, for $m_2>m_1$, the inclusion $\SSS^{M_p,m_2}_{\infty}\left(\RR^d\right)\longrightarrow\SSS^{M_p,m_1}_{\infty}\left(\RR^d\right)$ is a compact mapping. So, $\SSS^*\left(\RR^d\right)$ is a $(FS)$ - space in $(M_p)$ case, resp. a $(DFS)$ - space in the $\{M_p\}$ case. Moreover, they are nuclear spaces. In \cite{PilipovicK} (see also \cite{PilipovicT}) it is proved that $\ds\SSS^{\{M_{p}\}} = \lim_{\substack{\longleftarrow\\ (r_{i}), (s_{j}) \in \mathfrak{R}}}\SSS^{M_{p}}_{(r_{p}),(s_{q})}$, where $\ds\SSS^{M_{p}}_{(r_{p}),(s_{q})}=\left\{\varphi \in \mathcal{C}^{\infty} \left(\RR^d\right)|\|\varphi\|_{(r_{p}),(s_{q})}<\infty\right\}$ and $\ds\|\varphi\|_{(r_{p}),(s_{q})} =\sup_{\alpha\in \NN^d}\frac{\left\|D^{\alpha}\varphi(x)e^{N_{s_p}(|x|)}\right\|_{L^{\infty}}} {M_{\alpha}\prod^{|\alpha|}_{p=1}r_{p}}$. Also, the Fourier transform is a topological automorphism of $\SSS^*$ and of $\SSS'^*$.\\
\indent Denote by $\OO_C'^*$ the space of convolutors for $\SSS^*$, i.e. the space of all $T\in\SSS'^*$ for which the mapping $\varphi\mapsto T*\varphi$ is well defined and continuous mapping from $\SSS^*$ to itself. Denote by $\OO_M^*$ the space of multipliers for $\SSS^*$, i.e. the space of all $\psi\in\EE^*$ for which the mapping $\varphi\mapsto \psi\varphi$ is well defined and continuous mapping from $\SSS^*$ to itself. For the properties of these spaces we refer to \cite{PBD}.\\
\indent We need the following kernel theorem for $\SSS'^*$ from \cite{BojanS}. The $(M_p)$ case was already considered in \cite{LPK} (the authors used the characterization of Fourier-Hermite coefficients of the elements of the space in the proof of the kernel theorem).

\begin{proposition}\label{ktr}
The following isomorphisms of locally convex spaces hold
\beqs
&{}&\SSS^*\left(\RR^{d_1}\right)\hat{\otimes}\SSS^*\left(\RR^{d_2}\right)\cong\SSS^*\left(\RR^{d_1+d_2}\right)\cong \mathcal{L}_b\left(\SSS'^*\left(\RR^{d_1}\right),\SSS^*\left(\RR^{d_2}\right)\right),\\
&{}&\SSS'^*\left(\RR^{d_1}\right)\hat{\otimes}\SSS'^*\left(\RR^{d_2}\right)\cong\SSS'^*\left(\RR^{d_1+d_2}\right)\cong \mathcal{L}_b\left(\SSS^*\left(\RR^{d_1}\right),\SSS'^*\left(\RR^{d_2}\right)\right).
\eeqs
\end{proposition}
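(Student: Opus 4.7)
The plan is to establish the first isomorphism $\SSS^*(\RR^{d_1})\hat\otimes\SSS^*(\RR^{d_2})\cong\SSS^*(\RR^{d_1+d_2})$ by hand and then deduce the remaining identifications from general facts about nuclear (F) and (DF) spaces. Once the tensor product identification is in place, the relation with $\mathcal{L}_b(\SSS'^*(\RR^{d_1}),\SSS^*(\RR^{d_2}))$ follows because $\SSS^*$ has been shown, in the paragraph preceding the statement, to be a nuclear $(FS)$ space in the Beurling case and a nuclear $(DFS)$ space in the Roumieu case; both are reflexive, so the standard identification $E\hat\otimes F\cong\mathcal{L}_b(E'_b,F)$ for $E$ nuclear (F) or (DF) applies. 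The distributional identification follows by dualising: for nuclear reflexive $E,F$ one has $(E\hat\otimes F)'_b\cong E'_b\hat\otimes F'_b$, and this applied to $\SSS^*(\RR^{d_i})$ yields $\SSS'^*(\RR^{d_1+d_2})\cong\SSS'^*(\RR^{d_1})\hat\otimes\SSS'^*(\RR^{d_2})\cong\mathcal{L}_b(\SSS^*(\RR^{d_1}),\SSS'^*(\RR^{d_2}))$.

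For the core tensor-product isomorphism I would use the Hermite basis. The multidimensional Hermite functions $h_\alpha(x)=\prod_{j=1}^d h_{\alpha_j}(x_j)$ form an orthonormal basis of $L^2(\RR^d)$, and one shows that a function $\varphi\in L^2(\RR^d)$ belongs to $\SSS^*(\RR^d)$ if and only if its Hermite coefficients $a_\alpha=\langle\varphi,h_\alpha\rangle$ satisfy ultradifferentiable-type decay of the form $|a_\alpha|\leq C e^{-M(m\sqrt{|\alpha|})}$ for every $m>0$ (Beurling), respectively for some $m>0$ (Roumieu), or its analogue in terms of $N_{r_p}$. This turns $\SSS^*(\RR^d)$ into the sequence space built from these weight conditions, and the topology given by $\sigma_{m,2}$ in the excerpt is equivalent to the one defined by the decay of the $a_\alpha$; this equivalence is standard and uses the action of the harmonic oscillator $H=-\Delta+|x|^2$, whose eigenvalues $2|\alpha|+d$ on $h_\alpha$ give the correct weight.

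Because $h_{(\alpha,\beta)}(x,y)=h_\alpha(x)h_\beta(y)$ for $\alpha\in\NN^{d_1}$, $\beta\in\NN^{d_2}$, the sequence description is multiplicative in the obvious way. In the Beurling case one verifies directly that the projective topology on $\SSS^{(M_p)}(\RR^{d_1})\hat\otimes\SSS^{(M_p)}(\RR^{d_2})$ corresponds to the sequence topology on $\SSS^{(M_p)}(\RR^{d_1+d_2})$ after identifying basis elements. In the Roumieu case one uses the characterisation $\SSS^{\{M_p\}}=\varprojlim_{(r_p),(s_q)}\SSS^{M_p}_{(r_p),(s_q)}$ recalled in the excerpt, and uses Lemma \ref{pst} together with Proposition \ref{orn} to absorb products of sequences from $\mathfrak{R}$ into single sequences, which is exactly what is needed to match the tensor product topology with the topology on $\SSS^{\{M_p\}}(\RR^{d_1+d_2})$.

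The main obstacle is precisely the Roumieu step: passing between a projective limit over pairs $\big((r_p),(s_q)\big)\in\mathfrak{R}^2$ for the tensor product and the same structure over single pairs for $\SSS^{\{M_p\}}(\RR^{d_1+d_2})$ requires controlling products of the form $\prod_{j=1}^{p+q}r'_j$ by $\prod_{j=1}^p r'_j\cdot\prod_{j=1}^q r'_j$, which is the purpose of the auxiliary sequences from Lemma \ref{pst}; one must also check that the weight $N_{s_p}(|(x,y)|)$ splits suitably into $N_{s'_p}(|x|)+N_{s'_p}(|y|)$ up to equivalent weights, again using the flexibility provided by $\mathfrak{R}$. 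Once these technical manipulations are carried out, density of the algebraic tensor product in $\SSS^*(\RR^{d_1+d_2})$ (already visible from the Hermite basis) and nuclearity (so that $\pi$- and $\varepsilon$-topologies agree) complete the identification, after which, as noted, the $\mathcal{L}_b$ and dual versions are immediate.
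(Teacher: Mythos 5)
The paper does not actually prove this proposition; it is imported from \cite{BojanS}, with a parenthetical remark that the Beurling case appeared earlier in \cite{LPK} where the authors used the Fourier--Hermite coefficient characterization of $\SSS^{(M_p)}$. There is therefore no in-text proof to compare against. That said, your strategy --- establish $\SSS^*(\RR^{d_1})\hat\otimes\SSS^*(\RR^{d_2})\cong\SSS^*(\RR^{d_1+d_2})$ via the Hermite expansion and then extract the $\mathcal{L}_b$ and dual isomorphisms from nuclearity and reflexivity of $(FS)$ and $(DFS)$ spaces --- is exactly the method the paper attributes to \cite{LPK} for the Beurling case, and the functional-analytic reduction you invoke ($E\hat\otimes F\cong\mathcal{L}_b(E'_b,F)$ and $(E\hat\otimes F)'_b\cong E'_b\hat\otimes F'_b$ for nuclear, reflexive $(F)$ or $(DF)$ spaces) is a standard consequence of Grothendieck's theory, so the soft part of the argument is fine.

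Two points need tightening. First, the entire weight of the argument sits in the step you treat most briefly: proving the two-sided Hermite-coefficient characterization of $\SSS^*(\RR^d)$ (that $\varphi\in\SSS^*$ iff $|\langle\varphi,h_\alpha\rangle|\leq Ce^{-M(m\sqrt{|\alpha|})}$ for every, resp.\ some, $m>0$), and verifying that the associated K\"othe matrices tensor multiplicatively across dimensions. Here $(M.2)$ is what lets you pass between $M(k\sqrt{|\alpha|+|\beta|})$ and $M(k'\sqrt{|\alpha|})+M(k'\sqrt{|\beta|})$ up to a change of $k$, and in the Roumieu case you must additionally pass from an arbitrary $(s_p)\in\mathfrak{R}$ to a smaller $(s'_p)$ for which $N_{s'_p}$ inherits $(M.2)$; Lemma \ref{pst} is indeed the right device for that, but you should state explicitly that this is how the splitting of the weight is obtained. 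Second, the appeal to Proposition \ref{orn} is misplaced: that proposition constructs nonvanishing ultrapolynomials $P_l$, $P_{l_p}$ with lower bounds of the form $e^{M(|z|/k)}$ or $e^{N_{k_p}(|z|)}$, a tool for parametrices, structure theorems and oscillatory integrals, not for comparing K\"othe weight systems; nothing in the tensor-product identification needs it, and including it suggests a confusion between the two roles the $\mathfrak{R}$-indexed families play in the paper.
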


\indent As in \cite{PilipovicT}, we define $\DD^*_{L^{\infty}}$ by $\ds \DD^{(M_p)}_{L^{\infty}}=\lim_{\substack{\longleftarrow\\ h\rightarrow\infty}}\DD^{M_p}_{L^{\infty},h}$, resp. $\ds\DD^{\{M_p\}}_{L^{\infty}}=\lim_{\substack{\longrightarrow\\ h\rightarrow 0}}\DD^{M_p}_{L^{\infty},h}$, where $\DD^{M_p}_{L^{\infty},h}$ is the Banach space of all $\varphi\in\mathcal{C}^{\infty}$ for which the norm $\ds\sup_{\alpha\in\NN^d}\frac{h^{|\alpha|}\left\|D^{\alpha}\varphi\right\|_{L^{\infty}}}{M_{\alpha}}$ is finite. We define $\tilde{\DD}^{\{M_p\}}_{L^{\infty}}$ as the space of all $\mathcal{C}^{\infty}$ functions such that, for every $(t_j)\in\mathfrak{R}$, the norm $\ds p_{(t_j)}(\varphi)=\sup_{\alpha\in\NN^d}\frac{\left\|D^{\alpha}\varphi\right\|_{L^{\infty}}} {M_{\alpha}\prod_{j=1}^{|\alpha|}t_j}$ is finite. The space $\tilde{\DD}^{\{M_p\}}_{L^{\infty}}$ is complete Hausdorff locally convex space because $\ds \tilde{\DD}^{\{M_p\}}_{L^{\infty}}=\lim_{\substack{\longleftarrow\\ (t_j)\in\mathfrak{R}}}\tilde{\DD}^{M_p}_{L^{\infty},(t_j)}$, where $\tilde{\DD}^{M_p}_{L^{\infty},(t_j)}$ is the Banach space of all $\mathcal{C}^{\infty}$ functions for which the norm $p_{(t_j)}(\cdot)$ is finite. In \cite{PilipovicT} it is proved that $\DD^{\{M_p\}}_{L^{\infty}}=\tilde{\DD}^{\{M_p\}}_{L^{\infty}}$ as sets and the former has a stronger topology than the later. Denote by $\dot{\mathcal{B}}^{(M_p)}$, resp. $\dot{\tilde{\mathcal{B}}}^{\{M_p\}}$ the completion of $\DD^{(M_p)}$, resp. $\DD^{\{M_p\}}$, in $\DD^{(M_p)}_{L^{\infty}}$, resp. $\tilde{\DD}^{\{M_p\}}_{L^{\infty}}$. The strong dual of $\dot{\mathcal{B}}^{(M_p)}$, resp. $\dot{\tilde{\mathcal{B}}}^{\{M_p\}}$, will be denoted by $\DD_{L^1}'^{(M_p)}$, resp. $\tilde{\DD}'^{\{M_p\}}_{L^1}$. For the properties of these spaces we refer to \cite{PilipovicT}.

\section{A class of pseudo-differential operators}

In this section we will give a brief overview of the global symbol classes constructed in \cite{BojanS}. It is important to note that similar symbol classes were considered by M. Cappiello in \cite{C3} and \cite{C4}. All the results that we give in this section can be found in \cite{BojanS}.\\
\indent Let $a\in\SSS'^{*}\left(\RR^{2d}\right)$. For $\tau\in\RR$, consider the ultradistribution
\beq\label{3}
K_{\tau}(x,y)=\mathcal{F}^{-1}_{\xi\rightarrow x-y}(a)((1-\tau)x+\tau y,\xi)\in\SSS'^{*}\left(\RR^{2d}\right).
\eeq
Let $\Op_{\tau}(a)$ be the operator from $\SSS^*$ to $\SSS'^{*}$ corresponding to the kernel $K_{\tau}(x,y)$, i.e.
\beq
\langle \Op_{\tau}(a)u,v\rangle=\langle K_{\tau},v\otimes u\rangle,\, u,v\in\SSS^{*}\left(\RR^d\right).
\eeq
$a$ will be called the $\tau$-symbol of the pseudo-differential operator $\Op_{\tau}(a)$. When $\tau=0$, we will denote $\Op_{0}(a)$ by $a(x,D)$. When $a\in\SSS^{*}\left(\RR^{2d}\right)$,
\beq\label{5}
\Op_{\tau}(a)u(x)=\frac{1}{(2\pi)^d}\int_{\RR^{2d}}e^{i(x-y)\xi}a((1-\tau)x+\tau y,\xi)u(y)dyd\xi,
\eeq
where the integral is absolutely convergent.

\begin{proposition}\label{6}
The correspondence $a\mapsto K_{\tau}$ is an isomorphism of $\SSS^{*}\left(\RR^{2d}\right)$, of $\SSS'^{*}\left(\RR^{2d}\right)$ and of $L^2\left(\RR^{2d}\right)$. The inverse map is given by
\beqs
a(x,\xi)=\mathcal{F}_{y\rightarrow\xi}K_{\tau}(x+\tau y,x-(1-\tau)y).
\eeqs
\end{proposition}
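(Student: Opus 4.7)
The plan is to factor the map $a\mapsto K_\tau$ as the composition of two elementary operations -- a partial inverse Fourier transform in the second $d$-dimensional variable followed by a linear change of variables in $\RR^{2d}$ -- each of which is a topological isomorphism on each of the three spaces. Concretely, set $\tilde a(x,z):=\mathcal{F}^{-1}_{\xi\to z}(a)(x,\xi)$ and let $\Phi:\RR^{2d}\to\RR^{2d}$ be the linear bijection $\Phi(x,y):=((1-\tau)x+\tau y,\,x-y)$, so that $K_\tau=\tilde a\circ\Phi$.

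First I would verify that $a\mapsto\tilde a$ is an isomorphism of each of $\SSS^*(\RR^{2d})$, $\SSS'^*(\RR^{2d})$ and $L^2(\RR^{2d})$ onto itself. For $\SSS^*$ this follows from the nuclear tensor-product identification $\SSS^*(\RR^{2d})\cong\SSS^*(\RR^d)\hat\otimes\SSS^*(\RR^d)$ supplied by Proposition \ref{ktr}, together with the fact that $\mathcal{F}^{-1}$ is a topological automorphism of $\SSS^*(\RR^d)$: the partial transform is simply $\mathrm{id}\hat\otimes\mathcal{F}^{-1}$. The $\SSS'^*$ case follows by transposition, and the $L^2$ case is Plancherel applied in the second variable.

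Next I would show that the pullback $f\mapsto f\circ\Phi$ is a topological isomorphism of the same three spaces. The matrix of $\Phi$ has determinant $(-1)^d$, so $|\det\Phi|=1$ and the $L^2$ claim is immediate. For $\SSS^*$ one uses the equivalent seminorms $\|\cdot\|_m$ from the preliminaries together with two routine observations: $D^\alpha(\varphi\circ\Phi)$ is a linear combination of $(D^\beta\varphi)\circ\Phi$ with $|\beta|=|\alpha|$ and coefficients bounded by $\|\Phi\|^{|\alpha|}$, and $|\Phi(x,y)|$ is comparable to $|(x,y)|$, so that $e^{M(m|\Phi(\cdot)|)}$ is absorbed by $e^{M(Cm|\cdot|)}$ after rescaling the parameter $m$ (in the Roumieu case one uses the $(r_p),(s_q)$-seminorms $\|\cdot\|_{(r_p),(s_q)}$ instead, exploiting Lemma \ref{pst} to handle the product factors $\prod t_j$). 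Since $\Phi$ is invertible, the map is bijective with continuous inverse, and the $\SSS'^*$ statement follows by transposition.

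Composing the two isomorphisms yields the first assertion. For the inverse formula, I would solve $u=(1-\tau)x+\tau y$, $z=x-y$ to obtain $x=u+\tau z$, $y=u-(1-\tau)z$, so that $\Phi^{-1}(u,z)=(u+\tau z,\,u-(1-\tau)z)$ and $\tilde a(u,z)=K_\tau(u+\tau z,\,u-(1-\tau)z)$; applying $\mathcal{F}_{z\to\xi}$ recovers the formula stated in the proposition (with $y$ in place of $z$). The only non-mechanical part of the argument is checking the seminorm estimate for the pullback in the Roumieu setting, but this is standard once the equivalent families of seminorms recalled in the preliminaries are in hand.
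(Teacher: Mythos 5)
The paper does not supply a proof of Proposition~\ref{6}; it is recalled from \cite{BojanS} with the blanket remark that all results of Section~2 can be found there. Your factorization of $a\mapsto K_\tau$ into the partial inverse Fourier transform $\mathrm{id}\hat\otimes\mathcal F^{-1}$ followed by the pullback under the linear bijection $\Phi(x,y)=((1-\tau)x+\tau y,\,x-y)$ is the natural route, and the argument is correct: the tensor-product identification of Proposition~\ref{ktr} together with the automorphism property of $\mathcal F^{-1}$ on $\SSS^*(\RR^d)$ handles the first factor; $|\det\Phi|=(-1)^d$ has modulus one, so the $L^2$ claim for the pullback is immediate and its transpose on $\SSS'^*$ is again a pullback (by $\Phi^{-1}$); the seminorm estimate for the pullback on $\SSS^*$ reduces by linearity of $\Phi$ to $D^\alpha(\varphi\circ\Phi)=\sum_{|\beta|=|\alpha|}c_{\alpha\beta}(D^\beta\varphi)\circ\Phi$ with $\sum|c_{\alpha\beta}|\le (2d\|\Phi\|)^{|\alpha|}$ and $|x|\le\|\Phi^{-1}\|\,|\Phi x|$, both absorbed by a change of parameter; and solving $\Phi(x,y)=(u,z)$ gives exactly $\Phi^{-1}(u,z)=(u+\tau z,\,u-(1-\tau)z)$, yielding the stated inverse formula.

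One small remark: Lemma~\ref{pst} is not actually needed in the Roumieu step. Since $\Phi$ is linear, differentiation produces only indices $\beta$ with $|\beta|=|\alpha|$, so you never need to split a product $\prod_{j\le p+q}t_j$ into $\prod_{j\le p}t_j\cdot\prod_{j\le q}t_j$; the geometric factors $(2d\|\Phi\|)^{|\alpha|}$ and the dilation inside $N_{s_p}$ are absorbed simply by replacing $(r_p),(s_q)$ with $(r_p/(2d\|\Phi\|)),(s_q/\|\Phi^{-1}\|)$, which again lie in $\mathfrak R$. Lemma~\ref{pst} is the right tool when one has to control $\prod_{j=1}^{|\alpha+\beta|}t_j$ in terms of the separate products, but that situation does not arise here.
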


Operators with symbols in $\SSS^{*}$ correspond to kernels in $\SSS^{*}$ and by proposition \ref{ktr}, those extend to continuous operators from $\SSS'^{*}$ to $\SSS^{*}$. We will call these *-regularizing operators.\\
\indent Let $A_p$ and $B_p$ be sequences that satisfy $(M.1)$, $(M.3)'$ and $A_0=1$ and $B_0=1$. Moreover, let $A_p\subset M_p$ and $B_p\subset M_p$ i.e. there exist $c_0>0$ and $L>0$ such that $A_p\leq c_0 L^pM_p$ and $B_p\leq c_0 L^pM_p$, for all $p\in\NN$ (it is obvious that without losing generality we can assume that this $c_0$ is the same with $c_0$ from the conditions $(M.2)$ and $(M.3)$ for $M_p$). For $0<\rho\leq 1$, define $\Gamma_{A_p,B_p,\rho}^{M_p,\infty}\left(\RR^{2d};h,m\right)$ as the space of all $a\in \mathcal{C}^{\infty}\left(\RR^{2d}\right)$ for which the following norm is finite
\beqs
\|a\|_{h,m,\Gamma}=\sup_{\alpha,\beta}\sup_{(x,\xi)\in\RR^{2d}}\frac{\left|D^{\alpha}_{\xi}D^{\beta}_x a(x,\xi)\right|
\langle (x,\xi)\rangle^{\rho|\alpha|+\rho|\beta|}e^{-M(m|\xi|)}e^{-M(m|x|)}}{h^{|\alpha|+|\beta|}A_{\alpha}B_{\beta}}.
\eeqs
It is easily verified that it is a Banach space. Define
\beqs
\Gamma_{A_p,B_p,\rho}^{(M_p),\infty}\left(\RR^{2d};m\right)=\lim_{\substack{\longleftarrow\\h\rightarrow 0}}
\Gamma_{A_p,B_p,\rho}^{M_p,\infty}\left(\RR^{2d};h,m\right),\,
\Gamma_{A_p,B_p,\rho}^{(M_p),\infty}\left(\RR^{2d}\right)=\lim_{\substack{\longrightarrow\\m\rightarrow\infty}}
\Gamma_{A_p,B_p,\rho}^{(M_p),\infty}\left(\RR^{2d};m\right),\\
\Gamma_{A_p,B_p,\rho}^{\{M_p\},\infty}\left(\RR^{2d};h\right)=\lim_{\substack{\longleftarrow\\m\rightarrow 0}}
\Gamma_{A_p,B_p,\rho}^{M_p,\infty}\left(\RR^{2d};h,m\right),\,
\Gamma_{A_p,B_p,\rho}^{\{M_p\},\infty}\left(\RR^{2d}\right)=\lim_{\substack{\longrightarrow\\h\rightarrow\infty}}
\Gamma_{A_p,B_p,\rho}^{\{M_p\},\infty}\left(\RR^{2d};h\right).
\eeqs
$\Gamma_{A_p,B_p,\rho}^{(M_p),\infty}\left(\RR^{2d};m\right)$ and $\Gamma_{A_p,B_p,\rho}^{\{M_p\},\infty}\left(\RR^{2d};h\right)$ are $(F)$ - spaces. $\Gamma_{A_p,B_p,\rho}^{(M_p),\infty}\left(\RR^{2d}\right)$ and $\Gamma_{A_p,B_p,\rho}^{\{M_p\},\infty}\left(\RR^{2d}\right)$ are barreled and bornological locally convex spaces.

\begin{theorem}
Let $a\in\Gamma_{A_p,B_p,\rho}^{*,\infty}\left(\RR^{2d}\right)$. Then the integral (\ref{5}) is well defined as an iterated integral. The ultradistribution $\Op_{\tau}(a)u$, $u\in\SSS^*$, coincides with the function defined by that iterated integral.
\end{theorem}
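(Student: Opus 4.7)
The plan is to first establish absolute convergence of the iterated integral in (\ref{5}) via an integration-by-parts regularization in $y$, and then identify the resulting function with $\Op_\tau(a)u$ by Fubini and a change of variables. Fix $u\in\SSS^*(\RR^d)$ and $x\in\RR^d$. Since $a\in\Gamma_{A_p,B_p,\rho}^{*,\infty}$ carries factors $e^{M(m|\xi|)}$ and $e^{M(m|(1-\tau)x+\tau y|)}$ in its bound, one must compensate the growth in $\xi$. By proposition \ref{orn}, choose an even ultrapolynomial $P$ of class $*$ with $|P(\xi)|^{-1}\leq Ce^{-M(|\xi|/k)}$ in the Beurling case (respectively $|P(\xi)|^{-1}\leq Ce^{-N_{k_p}(|\xi|)}$ in the Roumieu case), where the decay rate is chosen to strictly dominate $e^{M(m|\xi|)}$. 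The identity $P(D_y)e^{i(x-y)\xi}=P(-\xi)e^{i(x-y)\xi}=P(\xi)e^{i(x-y)\xi}$ formally rewrites the integrand as $P(\xi)^{-1}P(D_y)\bigl[a((1-\tau)x+\tau y,\xi)u(y)\bigr]$ after integration by parts in $y$.

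Expanding $P=\sum_\alpha c_\alpha\xi^\alpha$ with $|c_\alpha|\leq CL^{|\alpha|}/M_\alpha$ and applying Leibniz inside $P(D_y)$, I combine the symbol estimate $|D_{x}^{\gamma}a((1-\tau)x+\tau y,\xi)|\leq Ch^{|\gamma|}B_\gamma e^{M(m|\xi|)}e^{M(m|(1-\tau)x+\tau y|)}$, the $\SSS^*$-decay $|D_y^{\alpha-\gamma}u(y)|\leq CM_{\alpha-\gamma}m_2^{-|\alpha-\gamma|}e^{-M(m_2|y|)}$ (with $m_2$ arbitrarily large in the Beurling case and via $\mathfrak{R}$-sequences in the Roumieu case), the inclusion $B_p\subset M_p$, and the stability of $M_p$ coming from $(M.2)$, to bound the $\alpha$-series by a convergent geometric sum once parameters are tuned. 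This yields absolute convergence of the $y$-integral with a bound of order $e^{M(m|\xi|)}e^{M(m|x|)}|P(\xi)|^{-1}$, and the exponential decay of $|P(\xi)|^{-1}$ beyond $e^{M(m|\xi|)}$ makes the $\xi$-integral absolutely convergent. Hence the iterated integral in (\ref{5}) defines a continuous function $f$ of $x$.

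To identify $f$ with $\Op_\tau(a)u$, I pair $f$ with $v\in\SSS^*(\RR^d)$. The same bounds combined with the decay of $v$ give absolute convergence of $\int v(x)\int\int(2\pi)^{-d}e^{i(x-y)\xi}a((1-\tau)x+\tau y,\xi)u(y)\,dyd\xi dx$, so Fubini applies. The linear change of variables $X=(1-\tau)x+\tau y$, $Y=x-y$ has Jacobian $1$ and satisfies $v(x)u(y)=v(X+\tau Y)u(X-(1-\tau)Y)$, so the pairing becomes $\bigl\langle a(X,\xi),\mathcal{F}^{-1}_{Y\to\xi}[v(X+\tau Y)u(X-(1-\tau)Y)]\bigr\rangle$. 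Transferring $\mathcal{F}^{-1}$ onto $a$ yields $\bigl\langle\mathcal{F}^{-1}_{\xi\to Y}a,v(X+\tau Y)u(X-(1-\tau)Y)\bigr\rangle$, which by (\ref{3}) equals $\langle K_\tau,v\otimes u\rangle=\langle\Op_\tau(a)u,v\rangle$. Hence $f=\Op_\tau(a)u$ in $\SSS'^*(\RR^d)$.

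The principal obstacle is the combinatorial tuning in the middle paragraph: the constant $L$ in the bound for $c_\alpha$ depends on the choice of $P$ (hence on the growth rate $m$ of $a$), while $h$ is fixed by the symbol class and $m_2$ is only genuinely free in the Beurling case. In the Roumieu case the characterization of $\SSS^{\{M_p\}}$ by the seminorms $\|\cdot\|_{(r_p),(s_q)}$ together with lemma \ref{pst} is needed to replace scalar parameters by compatible sequences in $\mathfrak{R}$, so that the Leibniz expansion of $P(D_y)$ and the decay of $u$ are coordinated uniformly with the decay rate of $P(\xi)^{-1}$.
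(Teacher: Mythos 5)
The overall strategy (regularize by an ultradifferential operator under the $y$-integral, establish absolute convergence of the iterated integral, then identify the result with $\Op_\tau(a)u$ by Fubini and the linear substitution $X=(1-\tau)x+\tau y$, $Y=x-y$) is the right one and the identification step is carried out correctly. However, there is a genuine gap in the convergence argument in the Roumieu case, and a parameter-bookkeeping error in the Beurling case.

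The gap: you write that $|P(\xi)|^{-1}\leq Ce^{-N_{k_p}(|\xi|)}$ is ``chosen to strictly dominate $e^{M(m|\xi|)}$'' in the Roumieu case. This is impossible for a single fixed $m>0$ and $(k_p)\in\mathfrak{R}$: the paper itself records that for any $(k_p)\in\mathfrak{R}$ and any $k>0$ one has $N_{k_p}(\rho)\leq M(k\rho)$ for $\rho$ large, so $e^{M(m|\xi|)-N_{k_p}(|\xi|)}\to\infty$ and the $\xi$-integral of the leading ($\gamma=0$) Leibniz term $a(\cdot,\xi)P(D_y)u(y)/P(\xi)$ is not obviously absolutely convergent. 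To repair this one must use the full strength of $a\in\Gamma_{A_p,B_p,\rho}^{\{M_p\},\infty}(\RR^{2d};h)=\lim_{\leftarrow,\,m\to0}\Gamma^{M_p,\infty}(h,m)$, i.e.\ that the estimate holds \emph{simultaneously} for all $m>0$; this Fr\'echet projective limit admits a description by $\mathfrak{R}$-sequences (of the same type as the one the paper quotes for $\SSS^{\{M_p\}}$), giving $|a(\cdot,\xi)|\leq Ce^{N_{r_p}(|\xi|)}e^{N_{r_p}(|x|)}$ for \emph{some} $(r_p)\in\mathfrak{R}$, and then $(k_p)$ is chosen (via Lemma~\ref{pst}) so that $e^{N_{r_p}(\rho)-N_{k_p}(\rho)}$ is integrable. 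Your closing paragraph invokes the $\|\cdot\|_{(r_p),(s_q)}$ seminorms only for the decay of $u$, which is not where the problem sits; the missing step is the analogous projective description of the symbol class itself.

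On the parameters: in the Beurling case $\Gamma^{(M_p),\infty}(\RR^{2d};m)$ is the projective limit over $h\to0$, so $h$ is a \emph{free} (arbitrarily small) parameter and it is $m$ that is fixed by the symbol; meanwhile $m_2$ (the decay rate of $u\in\SSS^{(M_p)}$) is also free. Your assertion that ``$h$ is fixed by the symbol class'' is true only in the Roumieu case. This matters because the geometric tuning in your middle paragraph in the Beurling case should use the freedom in $h$ and $m_2$ against the fixed $L$ (determined by $P$ hence by the fixed $m$), whereas in the Roumieu case one uses the freedom in $L$ (Roumieu ultrapolynomial: for every $L$ there exists $C_L$) and in $m$ against the fixed $h$ and $m_2$.
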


\begin{theorem}\label{npr}
The mapping $(a,u)\mapsto \Op_{\tau}(a)u$, $\Gamma_{A_p,B_p,\rho}^{*,\infty}\left(\RR^{2d}\right)\times
\SSS^*\left(\RR^d\right)\longrightarrow\SSS^*\left(\RR^d\right)$, is hypocontinuous.
\end{theorem}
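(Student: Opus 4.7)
Since both $\SSS^*(\RR^d)$ and $\Gamma_{A_p,B_p,\rho}^{*,\infty}(\RR^{2d})$ are barrelled locally convex spaces, hypocontinuity of $(a,u)\mapsto\Op_\tau(a)u$ will follow at once from a continuous bilinear seminorm estimate. In the Beurling case my aim is to prove that for every $m_0>0$ there exist $h,m,m_1,C>0$ with
\[
\|\Op_\tau(a)u\|_{m_0}\le C\|a\|_{h,m,\Gamma}\|u\|_{m_1},
\]
for all $a$ and $u$ in the corresponding Banach building blocks; the Roumieu version is the analogous statement using $\|\cdot\|_{(r_p),(s_q)}$ for $u$ and the $(l_p)\in\mathfrak{R}$ parameters for the symbol seminorm. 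From this, each partial map is linear continuous uniformly on bounded sets of the other argument, which is exactly hypocontinuity.

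To prove the estimate I start from the iterated integral (5), differentiate $D^\alpha_x$ under the integral, and use Leibniz to split $D^\alpha_x=D^{\alpha_1}_x D^{\alpha_2}_x$ between $e^{i(x-y)\xi}$ (producing a polynomial factor $\xi^{\alpha_1}$) and $a((1-\tau)x+\tau y,\xi)$ (producing $(1-\tau)^{|\alpha_2|}(D_1^{\alpha_2}a)$). Multiplying by $e^{M(m_0|x|)}$ and splitting the weight via the submultiplicativity $M(m_0|x|)\le M(2m_0|y|)+M(2m_0|x-y|)$ (a consequence of $|x|\le|y|+|x-y|$ together with $(M.2)$ for the associated function), the $|y|$-piece is absorbed into the $\SSS^*$-norm of $u$, while the $|x-y|$-piece must be neutralised inside the oscillatory integral.

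Here Proposition \ref{orn} is the key tool. I select two ultrapolynomials $P_1$, $P_2$ of class $*$, with no real zeroes, satisfying $|P_1(x-y)|\ge \tilde{C}e^{M(2m_0|x-y|)}$ and $|P_2(\xi)|\ge \tilde{C}e^{M(m'|\xi|)}$, where $m'$ is taken larger than $m$ and also absorbs the factor $\xi^{\alpha_1}$ through the elementary inequality $|\xi|^{|\alpha_1|}\le M_{\alpha_1}\,e^{M(s|\xi|)}/s^{|\alpha_1|}$. Using the identities $P_1(D_\xi)e^{i(x-y)\xi}=P_1(x-y)e^{i(x-y)\xi}$ and $P_2(D_y)e^{i(x-y)\xi}=P_2(-\xi)e^{i(x-y)\xi}$ (and noting that the ultrapolynomials from Proposition \ref{orn} are even), two integrations by parts, one in $\xi$ and one in $y$, transfer the ultradifferential operators $P_1(D_\xi)$ and $P_2(D_y)$ onto the product $a((1-\tau)x+\tau y,\xi)u(y)$, producing the denominators $P_1(x-y)P_2(\xi)$. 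These dominate the exponential weights and render the integrand absolutely convergent.

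Finally I expand $P_1$, $P_2$ as formal power series with coefficients bounded by $L^{|\gamma|}/M_\gamma$, apply Leibniz once more on $a\cdot u$, invoke the symbol bound on $D_x^\beta D_\xi^\gamma a$ and the seminorm of $u$, and collect the results. Thanks to the inclusions $A_p,B_p\subset M_p$ and $(M.2)$, the $A_p$, $B_p$, $M_p$ factors assemble into $CM_\alpha/m_0^{|\alpha|}$ times the product of seminorms, and taking $\sup_\alpha$ yields the desired bound. The main obstacle is the bookkeeping: two nested Leibniz expansions, the polynomial factor $\xi^{\alpha_1}$, and the derivative bounds on $1/P_1$, $1/P_2$ from Proposition \ref{orn} must collapse to an $M_\alpha$-dependence independent of the intermediate multi-indices. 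In the Roumieu setting this coupling is more delicate and is handled via Lemma \ref{pst}, whose submultiplicative sequence in $\mathfrak{R}$ allows products $\prod r_j$ and $\prod l_j$ to be split across the binomial expansions while keeping everything within the prescribed classes.
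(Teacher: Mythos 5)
The paper itself gives no proof of Theorem \ref{npr} — all results of Section 2 are imported from the reference \cite{BojanS}, so there is no "paper's proof" on the page to compare against. Judged on its own merits, your sketch is the natural and, as far as I can tell, correct route: Theorem 2.1 guarantees the iterated integral (\ref{5}) converges; differentiating under the integral sign, splitting the associated-function weight with the subadditivity $e^{M(\lambda+\nu)}\leq 2e^{M(2\lambda)}e^{M(2\nu)}$, and then regularising the oscillatory integral by two integrations by parts with the parametrized ultrapolynomials of Proposition \ref{orn} (one in $\xi$ to produce $1/P_1(x-y)$, one in $y$ to produce $1/P_2(\xi)$) is exactly the machinery this framework is built to supply, with $A_p,B_p\subset M_p$, $(M.2)$ and Lemma \ref{pst} absorbing the combinatorics.

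Two points you should tighten if you write this out in full. First, on the reduction to hypocontinuity: the estimate you aim for should be quantified "for every $m_0$ and every $m$ there exist $h,m_1,C$" in the Beurling case (and dually in the Roumieu case), since $\Gamma^{(M_p),\infty}_{A_p,B_p,\rho}$ is an inductive limit over $m$ and $\|\cdot\|_{h,m,\Gamma}$ is only a seminorm of the step $\Gamma^{(M_p),\infty}(m)$, not of the inductive limit; with the correct order of quantifiers the estimate yields continuity of each restriction $\Gamma^{(M_p),\infty}(m)\times\SSS^{(M_p)}\to\SSS^{(M_p)}$, hence separate continuity, hence hypocontinuity by barrelledness, but it does not by itself give a seminorm estimate on the full inductive limit. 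Second, when you integrate by parts in $y$ after having already introduced $1/P_1(x-y)$, the operator $P_2(-D_y)$ acts on the triple product $\bigl(1/P_1(x-y)\bigr)\cdot a((1-\tau)x+\tau y,\xi)\cdot u(y)$, not only on $a\cdot u$; the resulting derivatives of $1/P_1$ are controlled precisely by the $\alpha!/r^{|\alpha|}$-bounds in Proposition \ref{orn}, which you do invoke at the end, but your description "transfer $P_1(D_\xi)$ and $P_2(D_y)$ onto the product $au$" understates this. Neither point is a gap in the idea, only a gap in the bookkeeping, which you explicitly flag as the hard part.
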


Let $\ds\rho_1=\inf\{\rho\in\RR_+|A_p\subset M_p^{\rho}\}$ and $\ds\rho_2=\inf\{\rho\in\RR_+|B_p\subset M_p^{\rho}\}$ and put $\rho_0=\max\{\rho_1,\rho_2\}$. Then $0<\rho_0\leq 1$ and for every $\rho$ such that $\rho_0\leq \rho\leq1$, if the larger infimum can be reached, or, otherwise $\rho_0< \rho\leq1$, $A_p\subset M_p^{\rho}$ and $B_p\subset M_p^{\rho}$. So, for every such $\rho$, there exists $c'_0>0$ and $L>0$ (which depend on $\rho$) such that, $A_p\leq c'_0 L^p M_p^{\rho}$, $B_p\leq c'_0 L^p M_p^{\rho}$. Moreover, because $M_p$ tends to infinity, there exists $\tilde{c}>0$ such that $M_p^{\rho}\leq \tilde{c}M_p$, for all such $\rho$. From now on we suppose that $\rho_0\leq\rho\leq 1$, if the larger infimum can be reached, or otherwise $\rho_0<\rho\leq 1$.\\
\indent For $t>0$, put $Q_t=\left\{(x,\xi)\in\RR^{2d}|\langle x\rangle<t, \langle \xi\rangle<t\right\}$ and $Q_t^c=\RR^{2d}\backslash Q_t$. Denote by $FS_{A_p,B_p,\rho}^{M_p,\infty}\left(\RR^{2d};B,h,m\right)$ the vector space of all formal series $\ds \sum_{j=0}^{\infty}a_j(x,\xi)$ such that $a_j\in \mathcal{C}^{\infty}\left(\mathrm{int\,}Q^c_{Bm_j}\right)$, $D^{\alpha}_{\xi} D^{\beta}_x a_j(x,\xi)$ can be extended to continuous function on $Q^c_{Bm_j}$ for all $\alpha,\beta\in\NN^d$ and
\beqs
\sup_{j\in\NN}\sup_{\alpha,\beta}\sup_{(x,\xi)\in Q_{Bm_j}^c}\frac{\left|D^{\alpha}_{\xi}D^{\beta}_x a_j(x,\xi)\right|
\langle (x,\xi)\rangle^{\rho|\alpha|+\rho|\beta|+2j\rho}e^{-M(m|\xi|)}e^{-M(m|x|)}}
{h^{|\alpha|+|\beta|+2j}A_{\alpha}B_{\beta}A_jB_j}<\infty.
\eeqs
In the above, we use the convention $m_0=0$ and hence $Q^c_{Bm_0}=\RR^{2d}$. It is easy to check that $FS_{A_p,B_p,\rho}^{M_p,\infty}\left(\RR^{2d};B,h,m\right)$ is a Banach space. Define
\beqs
FS_{A_p,B_p,\rho}^{(M_p),\infty}\left(\RR^{2d};B,m\right)=\lim_{\substack{\longleftarrow\\h\rightarrow 0}}
FS_{A_p,B_p,\rho}^{M_p,\infty}\left(\RR^{2d};B,h,m\right),\\
FS_{A_p,B_p,\rho}^{(M_p),\infty}\left(\RR^{2d}\right)=\lim_{\substack{\longrightarrow\\B,m\rightarrow\infty}}
FS_{A_p,B_p,\rho}^{(M_p),\infty}\left(\RR^{2d};B,m\right),\\
FS_{A_p,B_p,\rho}^{\{M_p\},\infty}\left(\RR^{2d};B,h\right)=\lim_{\substack{\longleftarrow\\m\rightarrow 0}}
FS_{A_p,B_p,\rho}^{M_p,\infty}\left(\RR^{2d};B,h\right),\\
FS_{A_p,B_p,\rho}^{\{M_p\},\infty}\left(\RR^{2d}\right)=\lim_{\substack{\longrightarrow\\B,h\rightarrow \infty}}
FS_{A_p,B_p,\rho}^{\{M_p\},\infty}\left(\RR^{2d};B,h\right).
\eeqs
$FS_{A_p,B_p,\rho}^{(M_p),\infty}\left(\RR^{2d};B,m\right)$ and $FS_{A_p,B_p,\rho}^{\{M_p\},\infty}\left(\RR^{2d};B,h\right)$ are $(F)$ - spaces. $FS_{A_p,B_p,\rho}^{(M_p),\infty}\left(\RR^{2d}\right)$ and $FS_{A_p,B_p,\rho}^{\{M_p\},\infty}\left(\RR^{2d}\right)$ are barreled and bornological locally convex spaces. Note, also, that the inclusions $\Gamma_{A_p,B_p,\rho}^{*,\infty}\left(\RR^{2d}\right)\longrightarrow FS_{A_p,B_p,\rho}^{*,\infty}\left(\RR^{2d}\right)$, defined as $\ds a\mapsto\sum_{j\in\NN}a_j$, where $a_0=a$ and $a_j=0$, $j\geq 1$, is continuous.

\begin{definition}
Two sums, $\ds\sum_{j\in\NN}a_j,\,\sum_{j\in\NN}b_j\in FS_{A_p,B_p,\rho}^{*,\infty}\left(\RR^{2d}\right)$, are said to be equivalent, in notation $\ds\sum_{j\in\NN}a_j\sim\sum_{j\in\NN}b_j$, if there exist $m>0$ and $B>0$, resp. there exist $h>0$ and $B>0$, such that for every $h>0$, resp. for every $m>0$,
\beqs
\sup_{N\in\ZZ_+}\sup_{\alpha,\beta}\sup_{(x,\xi)\in Q_{Bm_N}^c}\frac{\left|D^{\alpha}_{\xi}D^{\beta}_x \sum_{j<N}\left(a_j(x,\xi)-b_j(x,\xi)\right)\right|
\langle (x,\xi)\rangle^{\rho|\alpha|+\rho|\beta|+2N\rho}}
{h^{|\alpha|+|\beta|+2N}A_{\alpha}B_{\beta}A_NB_N}\cdot\\
\cdot e^{-M(m|\xi|)}e^{-M(m|x|)}<\infty.
\eeqs
\end{definition}

From now on, we assume that $A_p$ and $B_p$ satisfy $(M.2)$. Without losing generality we can assume that the constants $c_0$ and $H$ from the condition $(M.2)$ for $A_p$ and $B_p$ are the same as the corresponding constants for $M_p$.

\begin{theorem}
Let $a\in\Gamma_{A_p,B_p,\rho}^{*,\infty}\left(\RR^{2d}\right)$ be such that $a\sim 0$. Then, for every $\tau\in\RR$, $\Op_{\tau}(a)$ is *-regularizing.
\end{theorem}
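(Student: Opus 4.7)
The plan is to show that $a\in\SSS^*(\RR^{2d})$; once this is established, Proposition \ref{6} provides $K_\tau\in\SSS^*(\RR^{2d})$ for every $\tau\in\RR$, and the kernel theorem, Proposition \ref{ktr}, identifies $\Op_\tau(a)$ with an element of $\mathcal L_b(\SSS'^*,\SSS^*)$, which is the defining property of a $*$-regularizing operator. Hence the whole work amounts to upgrading the bounds supplied by $a\in\Gamma^{*,\infty}_{A_p,B_p,\rho}$ together with $a\sim 0$ to the standard $\SSS^*$ seminorm estimates.

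Viewing $a$ as the formal sum in which $a_0=a$ and $a_j=0$ for $j\geq 1$, the relation $a\sim 0$ supplies, with the appropriate Beurling/Roumieu reading, constants $B$, $m$, $h$ such that for every $N\in\ZZ_+$, multi-indices $\alpha,\beta$, and $(x,\xi)\in Q^c_{Bm_N}$,
\begin{equation*}
\left|D^\alpha_\xi D^\beta_x a(x,\xi)\right|\leq C\,h^{|\alpha|+|\beta|+2N}A_\alpha B_\beta A_N B_N\,\langle(x,\xi)\rangle^{-\rho(|\alpha|+|\beta|+2N)}e^{M(m|x|)+M(m|\xi|)}.
\end{equation*}
Using $A_p,B_p\leq c_0'L^pM_p^\rho$ absorbs the sequences $A_p,B_p$ into powers of $M_p^\rho$. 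For fixed $(x,\xi)$ with $r=\langle(x,\xi)\rangle$ large I would choose $N=N(x,\xi)$ as the largest index with $r\geq Bm_N$ and then rely on the Komatsu-type identity
\begin{equation*}
\inf_{N\in\NN}(hL)^{2N}M_N^{2\rho}r^{-2\rho N}\leq\exp\!\Bigl(-2\rho M\bigl(r/(hL)^{1/\rho}\bigr)\Bigr),
\end{equation*}
which turns the polynomial-in-$N$ factor into true exponential decay in $r$ driven by the associated function. Pulling the $\alpha,\beta$-dependence out gives
\begin{equation*}
\left|D^\alpha_\xi D^\beta_x a(x,\xi)\right|\leq C'(hL')^{|\alpha|+|\beta|}M_\alpha M_\beta\,\exp\!\Bigl(-2\rho M\bigl(r/(hL)^{1/\rho}\bigr)+2M(mr)\Bigr)
\end{equation*}
for $r$ large, the bounded-$r$ region being handled directly by the $\Gamma^{*,\infty}$ seminorms.

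The final step converts this into an $\SSS^*(\RR^{2d})$ seminorm. In the Beurling case $m,B$ are fixed by the equivalence while $h>0$ is free, so given any target $m_1>0$ I would take $h$ small enough that simultaneously $m_1hL'\leq 1$ and, by iterated application of $(M.2)$, $2\rho M\bigl(r/(hL)^{1/\rho}\bigr)\geq 2M(mr)+2M(m_1r)+O(1)$; multiplying by $e^{M(m_1|x|)+M(m_1|\xi|)}$ then yields $\|a\|_{m_1}<\infty$ for every $m_1>0$, i.e.\ $a\in\SSS^{(M_p)}$. In the Roumieu case $h,B$ are fixed while $m>0$ is free; one then chooses $m_1>0$ small (as an explicit decreasing function of $h$, roughly $m_1\sim(hL)^{-1/\rho}/H$) so that $(m_1hL')^{|\alpha|+|\beta|}$ is bounded and $\rho M\bigl(r/(hL)^{1/\rho}\bigr)\geq M(m_1r)+O(1)$, and then lets $m\to 0$ so that the residual $2M(mr)$ is dominated. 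This gives $\|a\|_{m_1}<\infty$ for one admissible $m_1$, hence $a\in\SSS^{\{M_p\}}$.

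The main technical obstacle is the Komatsu bookkeeping at this last stage: unwinding the power $\rho\in(0,1]$ in $2\rho M\bigl(r/(hL)^{1/\rho}\bigr)$ against the unweighted $M(mr)$ and $M(m_1r)$ terms, which forces a bounded number of iterations of the consequence $2M(\cdot)\leq M(H\cdot)+\log c_0$ of $(M.2)$, together with careful tracking of how the ``for every/there exists'' quantifier patterns on $h,m,B$ in the definition of $\sim$ translate into the corresponding patterns defining $\SSS^{(M_p)}$ and $\SSS^{\{M_p\}}$. Everything else is routine.
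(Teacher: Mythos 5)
Your plan---derive $a\in\SSS^*(\RR^{2d})$ from $a\sim 0$ and then invoke Proposition \ref{6} (isomorphism $a\mapsto K_\tau$ of $\SSS^*$) together with the kernel theorem, Proposition \ref{ktr}---is the correct and standard approach, and the two reduction steps at the end are clean. The theorem is quoted here from \cite{BojanS}, where the argument is of exactly this shape: unwind $a\sim 0$ to pointwise bounds on $|D^\alpha_\xi D^\beta_x a|$, then trade the free integer $N$ against the radial variable to manufacture the $e^{-M(m_1|\cdot|)}$ decay.

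There is, however, a real slip in the middle step. Your ``Komatsu-type identity''
\begin{equation*}
\inf_{N\in\NN}(hL)^{2N}M_N^{2\rho}r^{-2\rho N}\le\exp\bigl(-2\rho M(r/(hL)^{1/\rho})\bigr)
\end{equation*}
is correct as an identity about the \emph{unconstrained} infimum, but the bound supplied by $a\sim 0$ at level $N$ is only valid on $Q^c_{Bm_N}$, so you may only use $N\le N(x,\xi)$ where $Bm_{N(x,\xi)}\le\max(\langle x\rangle,\langle\xi\rangle)$. The unconstrained minimizer for $s=r/(hL)^{1/\rho}$ sits at $m_{N_0}\sim s$, while the constraint caps you at $m_{N}\lesssim r/B$; in the Beurling case, where you deliberately send $h\to 0$, $N_0$ exceeds $N(x,\xi)$ and the free-$N$ optimum is simply not available. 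What one does instead is take $N=N(x,\xi)$ itself: since that $N$ is essentially the maximizing index for $M(r/(2B))$, one obtains
\begin{equation*}
(hL)^{2N}M_N^{2\rho}r^{-2\rho N}\le\bigl(hL/B^{\rho}\bigr)^{2N}\,e^{-2\rho M(r/(2B))},
\end{equation*}
so the argument of the associated function is $r/(2B)$, not $r/(hL)^{1/\rho}$, and the extra decay needed to dominate $e^{M(m|x|)+M(m|\xi|)}e^{M(m_1 r)}$ comes from the geometric prefactor $(hL/B^{\rho})^{2N}$ once Lemma \ref{69} is used to bound those exponentials by $H^{O(N)}$ on the shell $Bm_N\le\max(\langle x\rangle,\langle\xi\rangle)<Bm_{N+1}$. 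In the Beurling case the prefactor is controlled by shrinking $h$; in the Roumieu case $h$ is fixed and one must instead enlarge $B$ (which is permissible because enlarging $B$ weakens the $\sim$-condition), a degree of freedom your write-up does not mention but which is needed to make $(hL/B^\rho)^2 H^{O(1)}<1$. With those adjustments, your quantifier bookkeeping and $(M.2)$-iterations go through as you describe.
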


\begin{theorem}\label{150}
Let $\ds\sum_{j\in\NN}a_j\in FS_{A_p,B_p,\rho}^{*,\infty}\left(\RR^{2d}\right)$ be given. Than, there exists $a\in\Gamma_{A_p,B_p,\rho}^{*,\infty}\left(\RR^{2d}\right)$, such that $\ds a\sim\sum_{j\in\NN}a_j$.
\end{theorem}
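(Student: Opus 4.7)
The plan is to construct $a$ by the classical Borel-type resummation, adapted to the ultradifferentiable setting. I would set
\[
a(x,\xi) := \sum_{j\ge 0}\chi_j(x,\xi)\,a_j(x,\xi),
\]
where $\chi_j$ is a cutoff function equal to $0$ on $Q_{Bm_j/2}$ and to $1$ on $Q^c_{Bm_j}$. Since $m_j\to\infty$, the sum is locally finite and $a$ is a well-defined smooth function on $\RR^{2d}$; moreover, for every $N$ and every $(x,\xi)\in Q^c_{Bm_N}$ one has $\chi_j(x,\xi)\equiv 1$ for all $j<N$, so
\[
a(x,\xi)-\sum_{j<N}a_j(x,\xi)=\sum_{j\ge N}\chi_j(x,\xi)\,a_j(x,\xi)\quad\text{on } Q^c_{Bm_N},
\]
which is exactly the shape needed in the equivalence relation from the preceding definition.

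A standard $\mathcal C^\infty$ Urysohn function cannot be used for $\chi_j$, because its $k$th derivative grows like $k!$ instead of $M_k$. Instead I would take an ultradifferentiable cutoff of class $*$ (which exists in both cases, e.g.\ by convolving a characteristic function with a parametrix of the type $1/P_l(D)\delta$ provided by Proposition \ref{orn}), and rescale it by $Bm_j$. This yields
\[
\bigl|D^\alpha_\xi D^\beta_x\chi_j(x,\xi)\bigr|\le C\,h^{|\alpha|+|\beta|}M_{\alpha+\beta}(Bm_j)^{-|\alpha|-|\beta|},
\]
with the support of $D^\alpha_\xi D^\beta_x\chi_j$ (for $(\alpha,\beta)\ne 0$) contained in the transition shell $Bm_j/2\le\langle(x,\xi)\rangle\le Bm_j$, so that the scaling factor $(Bm_j)^{-k}$ converts into the decay factor $\langle(x,\xi)\rangle^{-k}$ which $\Gamma^{*,\infty}_{A_p,B_p,\rho}$ requires.

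The verification that $a\in\Gamma^{*,\infty}_{A_p,B_p,\rho}$ and that $\sum_{j\ge N}\chi_j a_j$ satisfies the $\sim 0$ estimate on $Q^c_{Bm_N}$ with the prescribed extra decay $\langle(x,\xi)\rangle^{-2N\rho}$ is then a calculation by Leibniz: each summand $D^{\alpha'}D^{\beta'}a_j\cdot D^{\alpha-\alpha'}D^{\beta-\beta'}\chi_j$ is estimated using the $FS$-bound for $a_j$ and the cutoff bound above, and the pointwise finiteness of the $j$-sum (only indices with $Bm_j\lesssim\langle(x,\xi)\rangle$ contribute) together with the geometric-type decay produced by $(M.2)$ for $M_p$ (and $A_p$, $B_p$) and by $A_p,B_p\subset M_p^{\rho}$ controls the result. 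The constants $A_\alpha B_\beta$, $A_NB_N$ are recovered from $M_{\alpha+\beta}$ and $M_N^{2\rho}$ via the embedding and $(M.2)$ applied to $A_p$, $B_p$.

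The main obstacle is the uniformity in the parameters. In the Beurling case one must verify that the construction produces estimates for arbitrarily small $h$ and arbitrarily large $m$ once a starting $(B_0,h_0,m_0)$ for the formal sum is fixed; in the Roumieu case one has instead to exhibit a single pair $(h,m)$ that controls $a$. This is the reason for choosing $B$ large compared with $B_0$: the extra Leibniz factors and the constants from $(M.2)$ must be absorbable into a mild enlargement of the cutoff scale, so that the tail series converges geometrically in $j$. Making these dependencies explicit, and keeping track of the $\rho$-loss in $\langle(x,\xi)\rangle$ coming from $A_p,B_p\subset M_p^{\rho}$, is what drives the bookkeeping; the rest is routine.
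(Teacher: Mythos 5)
Your construction is the same as the paper's (a Borel-type resummation $a=\sum_j(1-\chi_j)a_j$ against cutoffs rescaled by $Rm_j$), but there is a genuine gap in the derivative estimate you assert for the cutoff, and it propagates to the final verification. You take $\chi_j$ of generic class $*$, i.e.\ with
$\bigl|D^\alpha_\xi D^\beta_x\chi_j\bigr|\le Ch^{|\alpha|+|\beta|}M_{\alpha+\beta}(Bm_j)^{-|\alpha|-|\beta|}$.
But the norm defining $\Gamma^{*,\infty}_{A_p,B_p,\rho}$ has $A_\alpha B_\beta$ (not $M_\alpha M_\beta$) in the denominator, and so does the $FS$ seminorm. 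After Leibniz you are left with a factor $M_{\alpha'+\beta'}A_{\alpha-\alpha'}B_{\beta-\beta'}$ that must be dominated by $C\,h^{|\alpha|+|\beta|}A_\alpha B_\beta$. That requires a bound of the form $M_p\lesssim C h^p A_p$, which is exactly backwards: the hypothesis $A_p\subset M_p^\rho\subset M_p$ says $A_p\le c_0L^pM_p$, so $A_p$ can be \emph{much smaller} than $M_p$, and $M_p/A_p$ can grow faster than any geometric sequence. Your sentence ``the constants $A_\alpha B_\beta$ are recovered from $M_{\alpha+\beta}$ via the embedding'' inverts this inequality; it does not hold.

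The fix — and this is precisely what the paper does — is to take the cutoff not of class $*$ in $(x,\xi)$ jointly, but as a tensor product $\chi(x,\xi)=\varphi(x)\psi(\xi)$ with $\varphi\in\DD^{(B_p)}(\RR^d)$ (resp.\ $\DD^{\{B_p\}}$) and $\psi\in\DD^{(A_p)}(\RR^d)$ (resp.\ $\DD^{\{A_p\}}$), so that the $x$-derivatives are controlled by $B_p$ and the $\xi$-derivatives by $A_p$, yielding $\bigl|D^\alpha_\xi D^\beta_x\chi_j\bigr|\le Ch^{|\alpha|+|\beta|}A_\alpha B_\beta(Rm_j)^{-|\alpha|-|\beta|}$. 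Nontrivial such $\varphi,\psi$ exist because $A_p$ and $B_p$ are assumed to satisfy $(M.3)'$. With this corrected cutoff your Leibniz bookkeeping does go through: $(M.1)$ (or $(M.2)$) applied to $A_p$ and $B_p$ absorbs the split factors, and the shell localization $\langle(x,\xi)\rangle\asymp Rm_j$ together with $A_jB_j\le c\,L^{2j}m_j^{2j\rho}$ gives the geometric convergence in $j$ once $R^\rho$ is taken large compared with $L,H$. Everything else in your outline is in line with the paper.
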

In the proof of the theorem the construction of $a$ is given in the following way. Let $\varphi(x)\in\DD^{(B_p)}\left(\RR^{d}\right)$ and $\psi(\xi)\in\DD^{(A_p)}\left(\RR^{d}\right)$, in the $(M_p)$ case, resp. $\varphi(x)\in\DD^{\{B_p\}}\left(\RR^{d}\right)$ and $\psi(\xi)\in\DD^{\{A_p\}}\left(\RR^{d}\right)$ in the $\{M_p\}$ case, are such that $0\leq\varphi,\psi\leq 1$, $\varphi(x)=1$ when $\langle x\rangle\leq 2$, $\psi(\xi)=1$ when $\langle\xi\rangle\leq 2$ and $\varphi(x)=0$ when $\langle x\rangle\geq 3$, $\psi(\xi)=0$ when $\langle\xi\rangle\geq 3$. Put $\chi(x,\xi)=\varphi(x)\psi(\xi)$, $\ds \chi_n(x,\xi)=\chi\left(\frac{x}{Rm_n},\frac{\xi}{Rm_n}\right)$ for $n\in\ZZ_+$ and $R>0$ and put $\chi_0(x,\xi)=0$. The desired $a$ can be define to be $a(x,\xi)=\sum_j\left(1-\chi_j(x,\xi)\right)a_j(x,\xi)$ for sufficiently large $R$ in the definition of $\chi_n$.

\begin{theorem}\label{200}
Let $\tau,\tau_1\in\RR$ and $a\in\Gamma_{A_p,B_p,\rho}^{*,\infty}\left(\RR^{2d}\right)$. There exists $b\in\Gamma_{A_p,B_p,\rho}^{*,\infty}\left(\RR^{2d}\right)$ and *-regularizing operator $T$ such that $\Op_{\tau_1}(a)=\Op_{\tau}(b)+T$. Moreover,
\beqs
b(x,\xi)\sim\sum_{\beta}\frac{1}{\beta!}(\tau_1-\tau)^{|\beta|}\partial^{\beta}_{\xi}D^{\beta}_xa(x,\xi), \mbox{ in } FS_{A_p,B_p,\rho}^{*,\infty}\left(\RR^{2d}\right).
\eeqs
\end{theorem}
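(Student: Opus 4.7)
The plan is to follow the classical change-of-quantization argument, adapted to the ultradifferentiable setting through the formal-series machinery built above.

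First I would set
\begin{equation*}
b_j(x,\xi)=\sum_{|\beta|=j}\frac{(\tau_1-\tau)^{|\beta|}}{\beta!}\partial_\xi^\beta D_x^\beta a(x,\xi),\quad j\in\NN,
\end{equation*}
and verify that $\sum_j b_j\in FS_{A_p,B_p,\rho}^{*,\infty}(\RR^{2d})$. Differentiating $b_j$ by $D_\xi^\alpha D_x^\gamma$ and applying the $\Gamma$-norm of $a$ gives an estimate involving $A_{\alpha+\beta}B_{\gamma+\beta}$ and the weight $\langle(x,\xi)\rangle^{-\rho(|\alpha|+|\gamma|+2j)}e^{M(m|\xi|)+M(m|x|)}$; splitting $A_{\alpha+\beta}\leq c_0H^{|\alpha|+j}A_\alpha A_j$ and $B_{\gamma+\beta}\leq c_0H^{|\gamma|+j}B_\gamma B_j$ via $(M.2)$, and absorbing the combinatorial factor $\sum_{|\beta|=j}1/\beta!=d^j/j!$ together with the extra powers of $H$ and $|\tau_1-\tau|$ by enlarging the parameter $h$, one reads off the formal-series norm. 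The decay $\langle(x,\xi)\rangle^{-2j\rho}$ that $b_j$ inherits from the $2j$ extra differentiations on $a$ is exactly what the $FS$-norm requires.

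Next I would invoke Theorem~\ref{150} to obtain $b\in\Gamma_{A_p,B_p,\rho}^{*,\infty}(\RR^{2d})$ realizing this formal series, i.e. $b\sim\sum_j b_j$, which is the asymptotic formula of the statement. It remains to prove that $T:=\Op_{\tau_1}(a)-\Op_\tau(b)$ is $*$-regularizing. For this, I would write $(1-\tau_1)x+\tau_1 y=(1-\tau)x+\tau y+(\tau_1-\tau)(y-x)$ and Taylor-expand $a$ in the first variable around $(1-\tau)x+\tau y$ up to order $N$; $|\beta|$-fold integration by parts in $\xi$ turns each factor $(y-x)^\beta e^{i(x-y)\xi}$ into $\partial_\xi^\beta$ acting on $\partial_x^\beta a$ and yields, for every $N\in\ZZ_+$,
\begin{equation*}
K_{\tau_1}^a(x,y)=K_\tau^{s_N}(x,y)+R_N(x,y),
\end{equation*}
where $s_N=\sum_{|\beta|<N}\frac{(\tau_1-\tau)^{|\beta|}}{\beta!}\partial_\xi^\beta D_x^\beta a$ and $R_N$ is the oscillatory-integral kernel coming from the Taylor remainder. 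Consequently the kernel of $T$ equals $R_N(x,y)-K_\tau^{b-s_N}(x,y)$ for every $N$.

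The hard part will be to deduce from this finite-$N$ identity that the kernel of $T$ actually lies in $\SSS^*(\RR^{2d})$. For $K_\tau^{b-s_N}$ the equivalence $b\sim\sum_j b_j$ controls $D_\xi^\alpha D_x^\gamma(b-s_N)$ on $Q^c_{Bm_N}$ through the $FS$-norm with the improving weight $\langle(x,\xi)\rangle^{-2N\rho}$, while on $Q_{Bm_N}$ one uses $b,s_N\in\Gamma$. For $R_N$ the explicit Taylor-integral representation, together with the $\Gamma$-estimates on $\partial_\xi^\beta D_x^\beta a$ with $|\beta|=N$, provides the same type of $\langle\cdot\rangle^{-2N\rho}$ gain; ultrapolynomial manipulations in the spirit of Proposition~\ref{orn} are then used to convert the $\xi$-integrability into rapid decay in $x-y$ and to pass from polynomial to exponential control through the associated function $M(m|\cdot|)$. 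Since $N$ is arbitrary, both terms provide arbitrarily many bounded derivatives and arbitrarily fast exponential decay in the $\SSS^*$-sense, so the kernel of $T$ belongs to $\SSS^*(\RR^{2d})$, i.e. $T$ is $*$-regularizing.
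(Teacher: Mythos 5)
The paper does not prove Theorem~\ref{200} itself: Section~2 is explicitly a survey of results from \cite{BojanS}, and the proof is delegated to that reference. So there is no in-paper argument to compare against. What I can do is assess your plan on its own terms.

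Your outline is the classical change-of-quantization argument and, as such, it is the right strategy and is consistent in spirit with the proofs the paper does carry out (compare Theorem~\ref{260}, which also Taylor-expands the symbol, decomposes over the shells $\supp(\chi_{n+1}-\chi_n)$, and then performs ultradifferential-operator integration by parts to beat down the $e^{M(m|\cdot|)}$ growth). Your computation of $b_j$ and the $FS$-norm verification via $(M.2)$ are correct — note that $D_\xi^\beta\partial_x^\beta=\partial_\xi^\beta D_x^\beta$, so the integration by parts does produce exactly the stated terms. Invoking Theorem~\ref{150} is the right step. Two points in the last paragraph, however, are stated too loosely to count as a proof. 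First, the displayed identity $K_{\tau_1}^a=K_\tau^{s_N}+R_N$ is only formal before regularization: both $K_\tau^{b-s_N}$ and $R_N$ are $\xi$-integrals of functions that grow like $e^{M(m|\xi|)}$, so one must first replace $e^{i(x-y)\xi}$ by $P_l(D_\xi)e^{i(x-y)\xi}/P_l(x-y)$ as in Proposition~\ref{orn} and transfer the ultradifferential operator to the symbol before any pointwise estimate makes sense; this is more than a stylistic remark, because the derivatives produced by $P_l(D_\xi)$ have to be controlled by the $\Gamma$-estimates and the $\langle\cdot\rangle^{-\rho p}$ gains, and this control is what turns $1/P_l(x-y)\lesssim e^{-M(|x-y|/k)}$ into the decay in $x-y$ you need. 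Second, "since $N$ is arbitrary" does not by itself produce membership in $\SSS^*$: for a fixed $(x,y)$ you get the bound $|K_T(x,y)|\lesssim C\,h^{2N}A_N B_N\langle\cdot\rangle^{-2N\rho}$ from the equivalence, with $C$ uniform in $N$, and to convert this family of polynomial gains into a single $e^{-M(m'|\cdot|)}$ you must either optimize $N$ against $\langle(x,\xi)\rangle$ or (as the paper does in Theorem~\ref{260}) sum over a dyadic-type shell decomposition with the cutoffs $\chi_n$ and use Lemma~\ref{69} to absorb the $H$-powers; you would also need $R\gg 1$ to make the resulting geometric series converge. Neither of these is a fatal gap — they are exactly the ultradistributional refinements of the familiar distributional argument — but as written the plan asserts the conclusion rather than proving it, and the actual estimates are where all the $M_p$-specific work lives.

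One further remark: an alternative route, which avoids kernel-level oscillatory integrals entirely, is to use Proposition~\ref{6} to write the exact $\tau$-symbol $c$ of $\Op_{\tau_1}(a)$ as
\begin{equation*}
c(x,\xi)=\frac{1}{(2\pi)^d}\int_{\RR^{2d}}e^{iy(\zeta-\xi)}a\bigl(x+(\tau-\tau_1)y,\zeta\bigr)\,dy\,d\zeta ,
\end{equation*}
prove that $c\in\Gamma_{A_p,B_p,\rho}^{*,\infty}$ with $c\sim\sum_j b_j$, and then get $T=\Op_\tau(c-b)$ with $c-b\sim0$, so that $T$ is $*$-regularizing by the theorem immediately preceding Theorem~\ref{150}. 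This reduces the whole remainder problem to a single symbol-level oscillatory integral and lets the already-proved ``$a\sim 0\Rightarrow\Op_\tau(a)$ regularizing'' theorem do the heavy lifting; it is probably what \cite{BojanS} does, and it is cleaner than reconstructing the regularizing estimates on the kernel from scratch.
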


\begin{theorem}
Let $\tau\in\RR$ and $a\in\Gamma_{A_p,B_p,\rho}^{*,\infty}\left(\RR^{2d}\right)$. The transposed operator, ${}^t\Op_{\tau}(a)$, is still a pseudo-differential operator and it is equal to $\Op_{1-\tau}(a(x,-\xi))$. Moreover, there exist $b\in\Gamma_{A_p,B_p,\rho}^{*,\infty}\left(\RR^{2d}\right)$ and *-regularizing operator $T$ such that ${}^t\Op_{\tau}(a)=\Op_{\tau}(b)+T$ and
\beqs
b(x,\xi)\sim\sum_{\alpha}\frac{1}{\alpha!}(1-2\tau)^{|\alpha|}(-\partial_{\xi})^{\alpha}D^{\alpha}_x a(x,-\xi) \mbox{ in } FS_{A_p,B_p,\rho}^{*,\infty}\left(\RR^{2d}\right).
\eeqs
\end{theorem}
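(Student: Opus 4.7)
The strategy is two-fold: first, identify ${}^t\Op_{\tau}(a)$ with $\Op_{1-\tau}(\tilde a)$, where $\tilde a(x,\xi) := a(x,-\xi)$; then feed this into the previous Theorem \ref{200} with $(\tau_1,\tau) = (1-\tau,\tau)$ to obtain $b$ and the $*$-regularizing remainder $T$, and translate the resulting expansion back in terms of $a$.

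For the first step I would work directly with the kernels given by (\ref{3}). The kernel of ${}^t\Op_{\tau}(a)$ at $(x,y)$ is $K_{\tau}(y,x)=\mathcal{F}^{-1}_{\xi\to y-x}(a)((1-\tau)y+\tau x,\xi)$, while the kernel of $\Op_{1-\tau}(\tilde a)$ at $(x,y)$ is $\mathcal{F}^{-1}_{\xi\to x-y}(\tilde a)(\tau x+(1-\tau)y,\xi)$. Performing the change of variables $\xi\mapsto -\xi$ in the inverse Fourier transform replaces $\tilde a(z,\xi)=a(z,-\xi)$ with $a(z,\xi)$ and $x-y$ with $y-x$, so the two kernels coincide. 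The membership $\tilde a\in\Gamma_{A_p,B_p,\rho}^{*,\infty}(\RR^{2d})$ is immediate, because the defining seminorms depend on $\xi$ only through $|\xi|$ (via $\langle(x,\xi)\rangle$ and $e^{-M(m|\xi|)}$) and $|\partial_\xi^\alpha\tilde a(x,\xi)|=|(\partial_\xi^\alpha a)(x,-\xi)|$, so all relevant suprema are preserved.

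Theorem \ref{200} then provides $b\in\Gamma_{A_p,B_p,\rho}^{*,\infty}(\RR^{2d})$ and a $*$-regularizing $T$ with $\Op_{1-\tau}(\tilde a)=\Op_{\tau}(b)+T$, and the expansion
\[
b(x,\xi)\sim \sum_{\beta}\frac{(1-2\tau)^{|\beta|}}{\beta!}\,\partial_\xi^\beta D_x^\beta\tilde a(x,\xi)\quad\text{in } FS_{A_p,B_p,\rho}^{*,\infty}(\RR^{2d}).
\]
Since $D_x^\beta\tilde a(x,\xi)=(D_x^\beta a)(x,-\xi)$ and $\partial_\xi$ applied to a function of the form $\xi\mapsto f(x,-\xi)$ brings down a factor $-1$, each term in the series is exactly $\frac{(1-2\tau)^{|\beta|}}{\beta!}(-\partial_\xi)^\beta D_x^\beta a(x,-\xi)$, which is the asserted formula. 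The only place requiring care is this sign tracking when converting $\partial_\xi^\beta\tilde a$ to $(-\partial_\xi)^\beta a(\cdot,-\cdot)$; no new analytic ingredient beyond Theorem \ref{200} and the obvious symmetry of $\Gamma_{A_p,B_p,\rho}^{*,\infty}(\RR^{2d})$ under $\xi\mapsto -\xi$ is needed.
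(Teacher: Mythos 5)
Your proof is correct and follows the route the statement itself suggests: identify ${}^t\Op_\tau(a)$ with $\Op_{1-\tau}(\tilde a)$, $\tilde a(x,\xi)=a(x,-\xi)$, by the kernel formula (\ref{3}); note that the defining seminorms of $\Gamma_{A_p,B_p,\rho}^{*,\infty}$ are invariant under $\xi\mapsto-\xi$, so $\tilde a$ is again a symbol; then apply Theorem \ref{200} with $\tau_1=1-\tau$ and unwind the signs via $\partial_\xi^\beta D_x^\beta\tilde a(x,\xi)=\bigl((-\partial_\xi)^\beta D_x^\beta a\bigr)(x,-\xi)$. The paper defers this proof to \cite{BojanS}, but your reduction is precisely the intended argument.
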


\begin{theorem}\label{cef}
Let $a,b\in\Gamma_{A_p,B_p,\rho}^{*,\infty}\left(\RR^{2d}\right)$. There exist $f\in\Gamma_{A_p,B_p,\rho}^{*,\infty}\left(\RR^{2d}\right)$ and *-regularizing operator $T$ such that $a(x,D)b(x,D)=f(x,D)+T$ and $f$ has the asymptotic expansion
\beqs
f(x,\xi)\sim\sum_{\alpha}\frac{1}{\alpha!}\partial^{\alpha}_{\xi}a(x,\xi)D^{\alpha}_x b(x,\xi) \mbox{ in } FS_{A_p,B_p,\rho}^{*,\infty}\left(\RR^{2d}\right).
\eeqs
\end{theorem}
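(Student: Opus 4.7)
My plan has four stages: (i) identify the formal series candidate; (ii) verify it lies in $FS_{A_p,B_p,\rho}^{*,\infty}(\RR^{2d})$; (iii) realize an honest symbol $f$ via Theorem \ref{150}; (iv) show that $a(x,D)b(x,D)-f(x,D)$ is *-regularizing. The natural candidate is
\beqs
f_j(x,\xi):=\sum_{|\alpha|=j}\frac{1}{\alpha!}\,\partial^{\alpha}_\xi a(x,\xi)\, D^{\alpha}_x b(x,\xi),\qquad j\in\NN,
\eeqs
so that $f\sim\sum_j f_j$ is the asserted expansion once suitably regrouped by order.

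For step (ii), I would fix $(x,\xi)\in Q_{Bm_j}^c$ and apply Leibniz to $D_\xi^\alpha D_x^\beta f_j$. Each summand contains a product of a derivative of $a$ of bi-order at least $(\alpha+\gamma',\beta-\gamma'')$ with a derivative of $b$ of bi-order at least $(\alpha-\gamma',j+\gamma'')$; the symbol hypotheses on $a,b$ produce the factors $h_1^{\cdot}A_\cdot B_\cdot e^{M(m|\xi|)}e^{M(m|x|)}\langle(x,\xi)\rangle^{-\rho(\cdot)}$. The key point is that $(M.2)$ for $A_p$ and $B_p$ lets me split $A_{\alpha+j+\gamma'}\leq c_0H^{\alpha+j+\gamma'}A_{\alpha+\gamma'}A_j$ and similarly for $B_p$, absorbing the index shift $+j$ into new constants $h,m$. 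The factor $\langle(x,\xi)\rangle^{\rho|\alpha|+\rho|\beta|+2j\rho}$ demanded by the $FS$-norm is matched because each of the $j$ derivatives in $\xi$ on $a$ and $j$ derivatives in $x$ on $b$ contributes a power $\rho$; the binomial $\binom{\alpha}{\gamma'}\binom{\beta}{\gamma''}$ together with $1/\alpha!$ is absorbed by $h^{|\alpha|+|\beta|+2j}$. Thus $\sum_j f_j\in FS^{*,\infty}_{A_p,B_p,\rho}(\RR^{2d})$.

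Step (iii) is immediate from Theorem \ref{150}. Step (iv) is the heart. I would start from the oscillatory-integral representation
\beqs
a(x,D)b(x,D)u(x)=\frac{1}{(2\pi)^{2d}}\iiint e^{i(x-y)\xi+i(y-z)\eta}a(x,\xi)b(y,\eta)u(z)\,dz\,dy\,d\eta\,d\xi,
\eeqs
change variables $y=x+y'$, $\eta=\xi+\eta'$, and formally identify the $0$-symbol of the composition as
\beqs
c(x,\xi)=\frac{1}{(2\pi)^d}\iint e^{-iy'\eta'}a(x,\xi+\eta')b(x+y',\xi)\,dy'\,d\eta'.
\eeqs
Taylor-expanding $a(x,\xi+\eta')$ in $\eta'$ to order $N$, integrating by parts in $y'$ via $\eta'^\alpha e^{-iy'\eta'}=(i\partial_{y'})^\alpha e^{-iy'\eta'}$ and setting $y'=0$ in the non-remainder terms recovers exactly $\sum_{|\alpha|<N}\frac{1}{\alpha!}\partial_\xi^\alpha a\cdot D_x^\alpha b$. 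The integral becomes absolutely convergent once I insert the ultradifferential regularizers $P_l(D_{y'})$ and $P_l(D_{\eta'})$ from Proposition \ref{orn} (with $P_{l_p}$ in the Roumieu case), compensated by $1/P_l(\eta')$ and $1/P_l(y')$ arising from writing $1=P_l(D)(1/P_l(\cdot))$ under the oscillation. The integral form of the Taylor remainder, together with the cutoffs $(1-\chi_j)$ supplied by the construction in Theorem \ref{150}, then yields, for every $N$, an estimate of the error symbol that lies in the equivalence class of $0$; consequently its operator is *-regularizing.

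The hard part will be step (iv): controlling the remainder of the Taylor expansion in this ultradistribution setting, where the growth $e^{M(m|\xi|)+M(m|x|)}$ must be absorbed. The bookkeeping requires combining $(M.1)$--$(M.3)$ for $M_p$, the assumed $(M.2)$ for $A_p,B_p$, the inclusions $A_p,B_p\subset M_p$, and the fine choice of $h,m$ (resp.\ $(k_p)$) to balance the loss coming from the ultradifferential regularizer against the gain $\langle(x,\xi)\rangle^{-2\rho N}$ at order $N$; in the Roumieu case the reduction to arbitrary sequences in $\mathfrak R$ via Lemma \ref{pst} will be used to maintain the correct class membership of the approximating symbols.
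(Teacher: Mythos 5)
The theorem you are proving is only quoted in this paper; the authors explicitly write at the start of Section~2 that ``All the results that we give in this section can be found in \cite{BojanS},'' so there is no proof of Theorem~\ref{cef} in the text to compare against. What the paper does contain is the proof of the structurally analogous Theorem~\ref{260} (Anti--Wick to Weyl), and your plan is essentially the same architecture transplanted to the composition setting: form the $FS$-series $f_j=\sum_{|\alpha|=j}\frac{1}{\alpha!}\partial_\xi^\alpha a\,D_x^\alpha b$, realize $\tilde f=\sum_j(1-\chi_j)f_j$ via Theorem~\ref{150}, identify the true $0$-symbol $c$ of $a(x,D)b(x,D)$ through the oscillatory integral $c(x,\xi)=(2\pi)^{-d}\iint e^{-iy'\eta'}a(x,\xi+\eta')b(x+y',\xi)\,dy'\,d\eta'$, Taylor-expand $a$ in $\eta'$, and show $c-\tilde f\in\SSS^*$. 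That skeleton is correct; the oscillatory-integral formula you write down is the right one, and the formal recovery of $\sum\frac{1}{\alpha!}\partial_\xi^\alpha a\,D_x^\alpha b$ from the non-remainder Taylor terms checks out.

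Two remarks on the execution. In step (ii) your bi-order bookkeeping is garbled (you reuse $\alpha$ both as the order $|\alpha|=j$ inside $f_j$ and as the external differentiation order, and the claimed bi-orders $(\alpha+\gamma',\beta-\gamma'')$, $(\alpha-\gamma',j+\gamma'')$ do not come out of a Leibniz expansion of $D_\xi^\gamma D_x^\delta(\partial_\xi^\mu a\cdot D_x^\mu b)$), but the underlying mechanism --- use $(M.2)$ for $A_p,B_p$ to split $A_{\mu+\gamma'}\leq c_0H^{|\mu|+|\gamma'|}A_{\mu}A_{\gamma'}$ and $B_{\mu+\delta''}\leq c_0H^{|\mu|+|\delta''|}B_{\mu}B_{\delta''}$ so that $A_jB_j$ appears, and note that each of the $2j$ extra derivatives contributes a factor $\langle(x,\xi)\rangle^{-\rho}$ --- is sound. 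In step (iv), the phrase ``for every $N$, an estimate of the error symbol that lies in the equivalence class of $0$'' is not quite what is needed: you must produce a single estimate showing $c-\tilde f\in\SSS^*$. The correct organization is the telescoping decomposition $c-\tilde f=\sum_n(\chi_{n+1}-\chi_n)(c-\sum_{j\leq n}f_j)$, with the Taylor order tied to $n$ on the annulus supporting $\chi_{n+1}-\chi_n$; the gain $\langle(x,\xi)\rangle^{-2\rho(n+1)}\leq(Rm_n)^{-2\rho(n+1)}$ must then dominate both the factorials from the remainder and the loss $H^{\mathcal{O}(n)}$ coming from $e^{2M(\cdot m_{n+1})}$ via Lemma~\ref{69}, with $R$ (in the $(M_p)$ case) or $m$ (in the $\{M_p\}$ case) chosen to close the geometric series. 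This is precisely the mechanism carried out in detail in the paper's proof of Theorem~\ref{260}; your step (iv), which you correctly flag as ``the hard part,'' should be made to run along those same lines rather than left at the level of a formal Taylor expansion plus regularizers.
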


We end this section with the following technical lemma which is also proven in \cite{BojanS}.

\begin{lemma}\label{69}
Let $M_p$ be a sequence which satisfies $(M.1)$, $(M.2)$ and $(M.3)$ and $m$ a positive real. Then, for all $n\in\ZZ_+$, $M(mm_n)\leq 2(c_0m+2)n\ln H+\ln c_0$, where $c_0$ is the constant form the conditions $(M.2)$ and $(M.3)$. If $(t_p)\in\mathfrak{R}$ then, $N_{t_p}(mm_n)\leq n\ln H+\ln c$ for all $n\in\ZZ_+$, where the constant $c$ depends only on $M_p$, $(t_p)$ and $m$, but not on $n$.
\end{lemma}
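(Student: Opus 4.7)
The plan is to analyze the supremum defining $M(\cdot)$ (and $N_{t_p}(\cdot)$) by exhibiting the unimodality of $p\mapsto \rho^{p}/M_{p}$, then identifying the maximizing index and bounding both its value (via $(M.2)$) and its location (via $(M.3)$).

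\textbf{Key auxiliary inequality.} I would first prove
\beqs
\frac{m_r^{r}}{M_r} \leq c_0 H^{2r}, \quad r \in \ZZ_+.
\eeqs
The monotonicity of $m_p$ (a consequence of $(M.1)$) gives $M_{2r}/M_r = m_{r+1}\cdots m_{2r} \geq m_r^{r}$, while $(M.2)$ with $p = q = r$ gives $M_{2r} \leq c_0 H^{2r} M_r^{2}$. Combining and dividing by $M_r^{2}$ yields the bound.

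\textbf{Peak value of $\rho^{p}/M_{p}$.} Since $(\rho^{p+1}/M_{p+1})/(\rho^{p}/M_{p}) = \rho/m_{p+1}$ is $\geq 1$ precisely when $\rho \geq m_{p+1}$, the sequence is unimodal with maximum at the unique $r$ satisfying $m_r \leq \rho < m_{r+1}$. Using the identity $m_{r+1}^{r}/M_r = m_{r+1}^{r+1}/M_{r+1}$ and the key inequality applied with $r+1$ in place of $r$,
\beqs
M(\rho) \leq \ln\frac{m_{r+1}^{r+1}}{M_{r+1}} \leq \ln c_0 + 2(r+1)\ln H.
\eeqs

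\textbf{Bounding $r$ for $\rho = mm_n$.} If $mm_n \leq m_n$ then $r \leq n$ and the bound above immediately gives $M(mm_n) \leq \ln c_0 + 2(n+1)\ln H$, dominated by the desired expression for $n \geq 1$. Otherwise $r \geq n$, and $m_p \leq m_r \leq mm_n$ for every $p \in \{n+1,\ldots,r\}$, so by $(M.3)$
\beqs
\frac{r-n}{mm_n} \leq \sum_{p=n+1}^{r}\frac{1}{m_p} \leq \sum_{p > n}\frac{1}{m_p} \leq \frac{c_0 n}{m_{n+1}} \leq \frac{c_0 n}{m_n},
\eeqs
so $r - n \leq c_0 m n$, i.e.\ $r+1 \leq (c_0 m + 1)n + 1$. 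Substituting and absorbing the remaining $2\ln H$ into $2n \ln H$ (valid for $n \geq 1$) yields $M(mm_n) \leq \ln c_0 + 2(c_0 m + 2)n \ln H$.

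\textbf{Second part.} For $(t_p) \in \mathfrak{R}$ I would invoke Lemma \ref{pst} to replace $(t_p)$ by a smaller $(t'_p) \in \mathfrak{R}$ such that $N_p := M_p \prod_{j=1}^{p} t'_j$ satisfies $(M.1)$–$(M.3)$ (with constants depending only on those for $M_p$); since $t'_p \leq t_p$ gives $N_{t_p}(\rho) \leq N_{t'_p}(\rho)$, it suffices to bound the latter. I would rerun the three-step argument for the sequence $N_p$, the new input being that $t'_p \to \infty$, so I may fix $j_0 = j_0(m,(t'_p))$ with $t'_p \geq m$ for all $p \geq j_0$. This forces the maximizing index $r'$ of $(mm_n)^{p}/N_p$ to satisfy $r' + 1 \leq \max(n, j_0)$, because $n_{r'+1} = m_{r'+1} t'_{r'+1} \geq m_n \cdot m = mm_n$ as soon as $r'+1 \geq n$ and $r'+1 \geq j_0$. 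The $m$-dependence is thereby absorbed into the $n$-independent quantity $j_0$; collecting all remaining constants into $c = c(M_p,(t_p),m)$ yields the bound of the claimed form.

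The main obstacle is controlling the maximizing index in Part~2 in an $n$-independent, $m$-absorbing manner: the growth of $(t'_p)$ has to replace the role played by $(M.3)$ and the factor $(c_0 m + 2)$ in Part~1, so that the $m$-dependence leaves the $n$-linear coefficient and moves into the additive constant.
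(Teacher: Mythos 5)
Your Part~1 is correct: the unimodality of $\rho^p/M_p$, the auxiliary bound $m_r^r/M_r\leq c_0 H^{2r}$ from $(M.1)$--$(M.2)$, and the $(M.3)$-based control $r\leq(c_0 m+1)n+1$ of the peak index combine exactly as you describe and yield the claimed estimate (modulo innocuous boundary bookkeeping in the case split).

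Part~2 does not close by ``rerunning the three-step argument.'' After Lemma~\ref{pst}, the sequence $N_p=M_p\prod_{j=1}^p t'_j$ satisfies $(M.2)$ with constant $K=2H$. Steps~1--2 of your scheme then produce $N_{t'_p}(\rho)\leq\ln c_0+2(r'+1)\ln K$, and even under the most optimistic index bound $r'+1\leq n$ this puts the coefficient $2\ln(2H)$ in front of $n$ --- strictly larger than the $\ln H$ the lemma asserts, and the discrepancy $n\bigl(2\ln(2H)-\ln H\bigr)=n(\ln H+2\ln 2)$ grows with $n$, so it cannot be absorbed into $\ln c$. The factor $2$ multiplying $(r'+1)$ is intrinsic to your Step~2: you compare the peak with $n_{r'+1}^{r'+1}/N_{r'+1}$ and then invoke $(M.2)$ at index $2(r'+1)$, which is exactly where the doubling enters. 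No sharper control of $r'$ removes it, so the route you sketch proves only a weaker inequality of the form $N_{t_p}(mm_n)\leq 2n\ln(2H)+\ln c_0$.

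What does work for Part~2 is to bypass the peak index entirely. By $(M.1)$, $m_n^p\leq m_{n+1}\cdots m_{n+p}=M_{n+p}/M_n$ for $p\geq 1$, so $(M.2)$ gives $m_n^p\leq c_0H^{n+p}M_p$ for all $p\in\NN$, whence
\beqs
\frac{(mm_n)^p}{M_p\prod_{j=1}^p t_j}\leq c_0\,H^n\,\frac{(mH)^p}{\prod_{j=1}^p t_j},\qquad p\in\NN.
\eeqs
Since $t_p\nearrow\infty$, the sequence $(mH)^p/\prod_{j=1}^p t_j$ is eventually decreasing and hence bounded by some $C=C(M_p,(t_p),m)$. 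Taking $\log_+$ and the supremum over $p$ yields $N_{t_p}(mm_n)\leq n\ln H+\ln(c_0C)$, which is exactly the claim and needs neither Lemma~\ref{pst} nor the regularized sequence $(t'_p)$. This uniform-in-$p$ bound is available precisely because $\prod_{j=1}^p t_j$ eventually dominates $(mH)^p$; for $M_p$ alone in Part~1 the quantity $(mm_n)^p/M_p$ is unbounded in $p$, which is why the peak-index route remains necessary there.
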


\section{Anti-Wick quantization}

First we recall definition and some basic facts about the short-time Fourier transform. It will be convenient to introduce some notation to make the definitions less cumbersome. Put $\mathcal{G}_0(x)=\pi^{-d/4}e^{-\frac{1}{2}|x|^2}$ and $\mathcal{G}_{y,\eta}(x)=\pi^{-d/4}e^{ix\eta}e^{-\frac{1}{2}|x-y|^2}$, where $y$ and $\eta$ are parameters in $\RR^d$ and denote by $(\cdot,\cdot)$ the inner product in $L^2$.

\begin{definition}
For $u\in\SSS'^*$ we define the short-time Fourier transform $Vu$ of $u$ as the tempered ultradistribution in $\RR^{2d}$ given by $Vu(y,\eta)=\mathcal{F}_{t\rightarrow \eta}\left(u(t)\mathcal{G}_0(t-y)\right)$.
\end{definition}

\begin{proposition}
The short-time Fourier transform acts continuously $\SSS'^*\left(\RR^d\right)\longrightarrow \SSS'^*\left(\RR^{2d}\right)$, $\SSS^*\left(\RR^d\right)\longrightarrow \SSS^*\left(\RR^{2d}\right)$ and $L^2\left(\RR^d\right)\longrightarrow L^2\left(\RR^{2d}\right)$. Moreover $\|Vu\|_{L^2\left(\RR^{2d}\right)}=(2\pi)^{d/2}\|u\|_{L^2\left(\RR^d\right)}$.
\end{proposition}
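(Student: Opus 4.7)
The cleanest piece is the $L^2$ identity, which I would handle first. For $u \in L^2(\RR^d)$ and fixed $y$, the function $t \mapsto u(t)\mathcal{G}_0(t-y)$ lies in $L^1 \cap L^2(\RR^d)$ since $\mathcal{G}_0$ is bounded, so Plancherel's theorem applied to the Fourier transform in $\eta$ gives
\[
\int_{\RR^d} |Vu(y,\eta)|^2\, d\eta = (2\pi)^d \int_{\RR^d} |u(t)|^2\, |\mathcal{G}_0(t-y)|^2\, dt.
\]
Integrating in $y$, applying Fubini, and using $\|\mathcal{G}_0\|_{L^2}^2 = \pi^{-d/2}\int e^{-|s|^2}\,ds = 1$ yields $\|Vu\|_{L^2(\RR^{2d})}^2 = (2\pi)^d \|u\|_{L^2(\RR^d)}^2$, giving both the continuity and the stated norm identity on a dense subset which extends to all of $L^2$.

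For the $\SSS^*(\RR^d)\to\SSS^*(\RR^{2d})$ statement I would factor $V$ as
\[
u \;\longmapsto\; F(t,y) := u(t)\,\mathcal{G}_0(t-y) \;\longmapsto\; e^{-iy\eta}\,(\mathcal{F}_{t\to\eta}F)(\eta,y).
\]
The first arrow lands in $\SSS^*(\RR^{2d})$: after the Leibniz rule, $D_t^\alpha D_y^\beta F$ is a finite sum of terms $(D^\gamma u)(t)\,(\partial^{\alpha-\gamma+\beta}\mathcal{G}_0)(t-y)$, and the joint weight $e^{M(m|(t,y)|)}$ is controlled by subadditivity of the associated function, $M(m|(t,y)|) \leq M(2m|t|)+M(2m|t-y|)+C$, so that the $u$ factor absorbs the $|t|$-weight and $\mathcal{G}_0 \in \SSS^*$ absorbs the $|t-y|$-weight; the multi-index bookkeeping uses $(M.2)$ for $M_\alpha$. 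The second arrow is continuous because $\mathcal{F}$ is a topological automorphism of $\SSS^*(\RR^d)$, $\SSS^*(\RR^{2d})\cong\SSS^*(\RR^d)\hat{\otimes}\SSS^*(\RR^d)$ by Proposition \ref{ktr}, and the unimodular factor $e^{-iy\eta}$ acts as a multiplier in $\OO_M^*$. The Roumieu case runs the same argument with the $(r_p),(s_q)$-seminorms from the characterization $\SSS^{\{M_p\}} \cong \varprojlim \SSS^{M_p}_{(r_p),(s_q)}$, with the products $\prod r_j$ playing the role of powers of $m$ and with an auxiliary sequence provided by Lemma \ref{pst}.

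For the $\SSS'^*(\RR^d) \to \SSS'^*(\RR^{2d})$ statement I would use duality. A formal Fubini on $\SSS^*$ test functions identifies the required adjoint $V^{\top}: \SSS^*(\RR^{2d}) \to \SSS^*(\RR^d)$ as
\[
V^{\top} F(t) := \iint_{\RR^{2d}} F(y,\eta)\,\mathcal{G}_0(t-y)\,e^{-it\eta}\,dy\,d\eta,
\]
and then $V$ is extended by $\langle Vu, F\rangle := \langle u, V^{\top} F\rangle$ for $u\in\SSS'^*(\RR^d)$. Since $V^{\top}$ is structurally analogous to $V$—a partial Fourier transform in $\eta$, multiplication by the translated Gaussian $\mathcal{G}_0(t-y)$, then integration in $y$—its continuity $\SSS^*(\RR^{2d}) \to \SSS^*(\RR^d)$ follows by the same kind of seminorm estimates as in the previous paragraph. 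Transposition then yields continuity of $V:\SSS'^*(\RR^d)\to\SSS'^*(\RR^{2d})$, and agreement with the original integral on $\SSS^*$ is a direct check.

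The main technical obstacle is the joint weight estimate in the $\SSS^*$ step: one must control exponentials of the form $e^{M(m|(t,y)|)}$ and $e^{M(m|(y,\eta)|)}$ by products of exponentials indexed by the natural split variables, uniformly in the derivative orders. This rests on the subadditivity of $M(\cdot)$, which follows from $(M.2)$, but has to be tracked in parallel with the Leibniz expansion of derivatives and, in the Roumieu case, with the choice of the auxiliary sequence from Lemma \ref{pst}, so that the resulting seminorm constants remain finite.
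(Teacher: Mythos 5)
The paper states this proposition without proof (the text moves directly on to defining $V^*$), so there is no in-paper argument to compare against; judged on its own merits, your proof is sound and follows the natural route. The $L^2$ computation via Plancherel and Fubini is correct and in fact applies directly to every $u\in L^2$ (the inner function $t\mapsto u(t)\mathcal{G}_0(t-y)$ is in $L^1\cap L^2$ for every $y$, so no density step is needed). The factorization of $V$ for $\SSS^*$-continuity through multiplication by the translated Gaussian and then a partial Fourier transform, and duality for $\SSS'^*$, is exactly what the paper's immediate use of the adjoint $V^*$ suggests was intended; your transpose $V^\top$ is correct and differs from the paper's $V^*$ only by the usual conjugation between the bilinear pairing and the $L^2$ inner product.

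Two bookkeeping remarks. First, with $F(t,y) := u(t)\mathcal{G}_0(t-y)$ one has $Vu(y,\eta) = (\mathcal{F}_{t\to\eta}F)(\eta,y)$ with \emph{no} extra phase factor; the factor $e^{-iy\eta}$ appears only after the change of variables $t\mapsto t+y$, i.e.\ for $F(t,y)=u(t+y)\mathcal{G}_0(t)$. You seem to have merged the two normalizations; since you account for the chirp as a multiplier anyway, the argument is unaffected, but the display as written is inconsistent. Second, the subadditivity step should read $M(m|(t,y)|) \leq M(4m|t|)+M(2m|t-y|)+\log 2$, using $|(t,y)|\leq 2|t|+|t-y|$ and the paper's inequality $e^{M(\lambda+\nu)}\leq 2e^{M(2\lambda)}e^{M(2\nu)}$; the precise dilation constants are immaterial to the conclusion.
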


\noindent Its adjoint map $V^*:\SSS^*\left(\RR^{2d}\right)\longrightarrow \SSS^*\left(\RR^d\right)$,
\beqs
V^*F(t)=(2\pi)^d\int_{\RR^d}\mathcal{F}^{-1}_{\eta\rightarrow t}
\left(F(y,\eta)\right)\mathcal{G}_0(t-y)dy,\, F\in\SSS^*\left(\RR^{2d}\right)
\eeqs
extends to a well defined and continuous map $\SSS'^*\left(\RR^{2d}\right)\longrightarrow \SSS'^*\left(\RR^d\right)$ and $L^2\left(\RR^{2d}\right)\longrightarrow L^2\left(\RR^d\right)$ and $V^*V=(2\pi)^dI$. Now we can define Anti-Wick operators.

\begin{definition}
Let $a\in\SSS'^*\left(\RR^{2d}\right)$. We define the Anti-Wick operator with symbol $a$ as the map $A_a:\SSS^*\left(\RR^d\right)\longrightarrow\SSS'^*\left(\RR^d\right)$ given by $A_au=(2\pi)^{-d}V^*(aVu)$, $u\in\SSS^*\left(\RR^d\right)$.
\end{definition}
\noindent Observe that, if $a$ is a multiplier for $\SSS^*\left(\RR^{2d}\right)$ (for example an element of $\Gamma_{A_p,B_p,\rho}^{*,\infty}\left(\RR^{2d}\right)$), then $A_a$ maps $\SSS^*\left(\RR^d\right)$ continuously into itself. Also, note that the above formula is equivalent to
\beq\label{240}
\langle A_au,\overline{v}\rangle=(2\pi)^{-d}\langle a,Vu\overline{Vv}\rangle,\quad u,v\in\SSS^*\left(\RR^d\right).
\eeq

\noindent From this, the following propositions follow.

\begin{proposition}
Let $a_n\in\SSS'^*\left(\RR^{2d}\right)$ be a sequence that converges to $a$ in $\SSS'^*\left(\RR^{2d}\right)$, then $A_{a_n} u\longrightarrow A_a u$, for every $u\in\SSS^*\left(\RR^d\right)$.
\end{proposition}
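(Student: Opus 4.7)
The plan is to read off the statement from the bilinear identity \eqref{240}, so the proof reduces almost entirely to verifying that the test object $Vu\,\overline{Vv}$ lies in $\SSS^{*}(\RR^{2d})$; once that is in place, convergence $a_n\to a$ in $\SSS'^{*}(\RR^{2d})$ applied to this test function immediately gives convergence of $\langle A_{a_n}u,\overline{v}\rangle$ to $\langle A_a u,\overline{v}\rangle$ for every $v\in\SSS^{*}(\RR^d)$.

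More precisely, I would fix $u\in\SSS^{*}(\RR^d)$ and an arbitrary $v\in\SSS^{*}(\RR^d)$, and use \eqref{240} to write
\beqs
\langle A_{a_n}u,\overline{v}\rangle-\langle A_a u,\overline{v}\rangle
=(2\pi)^{-d}\,\langle a_n-a,\,Vu\,\overline{Vv}\rangle.
\eeqs
By the preceding proposition on the short-time Fourier transform, both $Vu$ and $Vv$ belong to $\SSS^{*}(\RR^{2d})$. Since $\SSS^{*}(\RR^{2d})$ is stable under complex conjugation (by inspection of the defining seminorms $\|\cdot\|_m$ and $\|\cdot\|_{(r_p),(s_q)}$) and under pointwise multiplication (a standard fact following directly from the $(M.2)$ Leibniz-type estimates for the $M_p$-derivatives together with the inequality $\langle x\rangle\leq\langle x\rangle\cdot e^{M(m|x|)/2}\cdot e^{M(m|x|)/2}$ type bookkeeping), the product $Vu\,\overline{Vv}$ is a bona fide element of $\SSS^{*}(\RR^{2d})$. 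Hence $\langle a_n-a,Vu\,\overline{Vv}\rangle\to 0$ by definition of convergence in $\SSS'^{*}(\RR^{2d})$.

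This shows weak convergence $A_{a_n}u\to A_a u$ in $\SSS'^{*}(\RR^d)$. To upgrade to strong convergence (which is what the notation in the statement should mean), I would invoke the fact, recorded earlier in the preliminaries, that $\SSS^{*}(\RR^d)$ is an $(FS)$-space in the Beurling case and a $(DFS)$-space in the Roumieu case, so in both cases it is a Montel space. Then $\SSS'^{*}(\RR^d)$ is also Montel, and in a Montel space weak sequential convergence implies strong sequential convergence. Thus $A_{a_n}u\to A_a u$ strongly in $\SSS'^{*}(\RR^d)$.

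The only mildly non-routine point is the multiplicative stability of $\SSS^{*}$, which is essential for $Vu\,\overline{Vv}$ to be a test function; everything else is a direct unwinding of definitions and an appeal to the Montel property. I do not expect any genuine obstacle, because the continuity properties of $V$ and $V^{*}$ have already been stated, and the identity \eqref{240} was set up precisely so that weak continuity of $a\mapsto A_a$ becomes tautological.
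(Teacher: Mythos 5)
Your proof is correct and takes essentially the same route the paper intends: the paper records identity (\ref{240}) and then simply asserts that the proposition follows, which is exactly the reduction you carry out. Your additional remarks — that $Vu\,\overline{Vv}\in\SSS^{*}(\RR^{2d})$ (needed already for (\ref{240}) to make sense) and that the Montel property upgrades the resulting weak-$*$ sequential convergence to strong convergence in $\SSS'^{*}$ — fill in the details the paper leaves implicit, without changing the underlying argument.
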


\begin{proposition}
Let $a\in\SSS'^*\left(\RR^{2d}\right)$ be real valued. Then $A_a$ is formally self-adjoint.
\end{proposition}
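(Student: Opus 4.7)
The plan is to derive the identity $\langle A_a u,\overline{v}\rangle=\overline{\langle A_a v,\overline{u}\rangle}$ for all $u,v\in\SSS^*(\RR^d)$ directly from the bilinear representation (\ref{240}) and the hypothesis that $a$ is real valued.

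First I would recall what "real valued" means for a tempered ultradistribution: $a\in\SSS'^*(\RR^{2d})$ is real valued if and only if $\overline{\langle a,\Phi\rangle}=\langle a,\overline{\Phi}\rangle$ for every $\Phi\in\SSS^*(\RR^{2d})$ (equivalently, $\overline{a}=a$ in $\SSS'^*$). The product $Vu\cdot\overline{Vv}$ belongs to $\SSS^*(\RR^{2d})$, since the short-time Fourier transform maps $\SSS^*(\RR^d)$ continuously into $\SSS^*(\RR^{2d})$ and $\SSS^*(\RR^{2d})$ is closed under pointwise multiplication and complex conjugation; so the pairings below make sense.

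The computation is then essentially one line. Starting from (\ref{240}) applied with the roles of $u$ and $v$ exchanged,
\begin{equation*}
\langle A_a v,\overline{u}\rangle=(2\pi)^{-d}\langle a, Vv\,\overline{Vu}\rangle,
\end{equation*}
one takes complex conjugates and uses $\overline{a}=a$ to obtain
\begin{equation*}
\overline{\langle A_a v,\overline{u}\rangle}=(2\pi)^{-d}\overline{\langle a,Vv\,\overline{Vu}\rangle}=(2\pi)^{-d}\langle a,\overline{Vv\,\overline{Vu}}\rangle=(2\pi)^{-d}\langle a,\overline{Vv}\,Vu\rangle=\langle A_a u,\overline{v}\rangle,
\end{equation*}
which is exactly the statement that $A_a$ is formally self-adjoint (i.e., $(A_a u,v)_{L^2}=(u,A_a v)_{L^2}$ in the duality interpretation used here).

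There is essentially no obstacle; the only subtle point is the interpretation of "real valued" for an ultradistribution and making sure the test function $Vu\,\overline{Vv}$ lies in the right space so that the pairing $\langle a,\cdot\rangle$ and its conjugate-linear companion are available. Both are covered by the continuity of $V$ from $\SSS^*(\RR^d)$ into $\SSS^*(\RR^{2d})$ stated in the preceding proposition.
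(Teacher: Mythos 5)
Your proof is correct and is exactly the argument the paper implicitly intends: the paper states that the proposition "follows" from the identity (\ref{240}), and your one-line conjugation computation, using that $\SSS^*$ is stable under multiplication and conjugation so $Vu\,\overline{Vv}\in\SSS^*(\RR^{2d})$, is precisely that deduction.
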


\noindent If $a$ is locally integrable function of *-ultrapolynomial growth (for example, if it is an element of $\Gamma_{A_p,B_p,\rho}^{*,\infty}\left(\RR^{2d}\right)$), then, by (\ref{240}), we can represent the action of $A_a$ as
\beqs
A_au(x)=\frac{1}{(2\pi)^d}\int_{\RR^{2d}}a(y,\eta)\left(u,\mathcal{G}_{y,\eta}\right)\mathcal{G}_{y,\eta}(x)dyd\eta,
\quad u\in\SSS^*\left(\RR^d\right).
\eeqs

\noindent The proof of the following proposition is the same as in the case of distribution and it will be omitted (see for example \cite{NR}).

\begin{proposition}\label{245}
Let $a\in\SSS'^*\left(\RR^{2d}\right)$. Then $A_a=b^w$ where $b\in\SSS'^*\left(\RR^{2d}\right)$ is given by
\beq\label{247}
b(x,\xi)=\pi^{-d}\left(a(\cdot,\cdot)*e^{-|\cdot|^2-|\cdot|^2}\right)(x,\xi).
\eeq
\end{proposition}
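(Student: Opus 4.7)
The plan is to verify the identity $\langle A_au,\overline{v}\rangle=\langle b^wu,\overline{v}\rangle$ for all $u,v\in\SSS^*(\RR^d)$, first on a dense subspace of symbols and then extend by continuity. Observe that from (\ref{240}) the map $a\mapsto A_a$ is sequentially continuous from $\SSS'^*(\RR^{2d})$ into the space of continuous bilinear forms on $\SSS^*(\RR^d)\times\SSS^*(\RR^d)$, and that $a\mapsto \pi^{-d}a*e^{-|\cdot|^2-|\cdot|^2}$ is continuous $\SSS'^*(\RR^{2d})\to\SSS'^*(\RR^{2d})$, since the Gaussian is a multiplier/convolutor. Composing with Theorem \ref{npr} (or directly with the kernel theorem \ref{ktr}, which identifies the Weyl symbol continuously with the Schwartz kernel), the right-hand side is likewise continuous in $a$. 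By density of $\SSS^*(\RR^{2d})$ in $\SSS'^*(\RR^{2d})$ it suffices to treat $a\in\SSS^*(\RR^{2d})$, where every interchange of integrals below is absolutely justified.

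For such $a$, I would start from the integral representation
\begin{equation*}
\langle A_au,\overline{v}\rangle=(2\pi)^{-d}\int_{\RR^{2d}} a(y,\eta)\,Vu(y,\eta)\overline{Vv(y,\eta)}\,dy\,d\eta.
\end{equation*}
Inserting the definitions of $Vu$ and $Vv$ and performing the change of variables $t=x+r/2$, $s=x-r/2$ inside the $L^2$ product that defines $Vu\overline{Vv}$, the factor $e^{-|t-y|^2/2-|s-y|^2/2}$ separates as $e^{-|x-y|^2}e^{-|r|^2/4}$. The inner $r$-integral is then
\begin{equation*}
\int_{\RR^d}e^{-|r|^2/4}u(x+r/2)\overline{v(x-r/2)}e^{-ir\eta}\,dr,
\end{equation*}
which, via Plancherel and the elementary identity $\mathcal{F}(e^{-|\cdot|^2/4})(\eta)=(2\sqrt{\pi})^d e^{-|\eta|^2}$, equals $\pi^{-d/2}(e^{-|\cdot|^2}*_{\eta} W(u,v)(x,\cdot))(\eta)$, where $W(u,v)(x,\eta)=\int e^{-ir\eta}u(x+r/2)\overline{v(x-r/2)}\,dr$ denotes the cross-Wigner transform. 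Combining with the $y$-variable Gaussian $e^{-|x-y|^2}$ produces the full two-dimensional convolution
\begin{equation*}
Vu(y,\eta)\overline{Vv(y,\eta)}=\pi^{-d}\bigl(e^{-|\cdot|^2-|\cdot|^2}*W(u,v)\bigr)(y,\eta).
\end{equation*}

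Substituting this back and moving the convolution onto $a$ (legitimate since the Gaussian kernel is even, so its involution equals itself) yields
\begin{equation*}
\langle A_au,\overline{v}\rangle=(2\pi)^{-d}\bigl\langle\pi^{-d} a*e^{-|\cdot|^2-|\cdot|^2},\,W(u,v)\bigr\rangle=(2\pi)^{-d}\langle b,W(u,v)\rangle.
\end{equation*}
On the other hand, applying the change of variables $(x,y)\mapsto(w+r/2,w-r/2)$ to the formula (\ref{5}) for $\Op_{1/2}(b)=b^w$ gives exactly $\langle b^wu,\overline{v}\rangle=(2\pi)^{-d}\langle b,W(u,v)\rangle$. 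Equating the two expressions proves $A_au=b^wu$ for every $u\in\SSS^*(\RR^d)$, hence $A_a=b^w$.

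The main obstacle is purely at the level of hypotheses: one must confirm that $e^{-|x|^2-|\xi|^2}\in\SSS^*(\RR^{2d})$ and that $W(u,v)\in\SSS^*(\RR^{2d})$ for $u,v\in\SSS^*(\RR^d)$, so that the duality pairings above make sense as tempered ultradistribution pairings. The Gaussian membership follows from the growth bounds on $M_p$ assumed throughout Section 1, while the Wigner mapping property is a bilinear variant of the continuity of $V$ stated before Proposition \ref{245}. Once these are in place, everything else is bookkeeping: Fubini on the dense subspace, then extension by the joint continuity argument sketched in the opening paragraph.
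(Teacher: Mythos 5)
Your proof is correct and follows exactly the route the paper intends: the paper omits the proof, citing [NR] for the distributional case and asserting the ultradistributional case is "the same," and you have supplied precisely that classical argument (the Gaussian-convolution identity $Vu\,\overline{Vv}=\pi^{-d}\bigl(e^{-|\cdot|^2-|\cdot|^2}*W(u,v)\bigr)$, the pairing formula $\langle b^w u,\overline{v}\rangle=(2\pi)^{-d}\langle b,W(u,v)\rangle$ for the Weyl quantization, and reduction to $a\in\SSS^*$ by density and continuity). You also correctly flag the only genuinely ultradistributional hypotheses that need checking, namely $e^{-|\cdot|^2-|\cdot|^2}\in\SSS^*(\RR^{2d})$ and $W:\SSS^*\times\SSS^*\to\SSS^*(\RR^{2d})$, both of which are standard.
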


\indent From now on we assume that $A_p=B_p$. Our goal is to represent the Anti-Wick operator $A_a$, for $a\in\Gamma_{A_p,A_p,\rho}^{*,\infty}\left(\RR^{2d}\right)$ as a pseudo-differential operator $b^w$ for some $b\in\Gamma_{A_p,A_p,\rho}^{*,\infty}\left(\RR^{2d}\right)$. First, note that $|\eta|^{2k}\leq k!e^{|\eta|^2}$, for all $k\in\NN$. From this one easily obtains the following inequality
\beq\label{250}
\langle \eta\rangle^{k}\leq 2^{k}\sqrt{k!}e^{|\eta|^2/2}.
\eeq

\begin{theorem}\label{260}
Let $a\in\Gamma_{A_p,A_p,\rho}^{*,\infty}\left(\RR^{2d}\right)$. Then there exists $\tilde{b}\in\Gamma_{A_p,A_p,\rho}^{*,\infty}\left(\RR^{2d}\right)$ and *-regularizing operator $T$ such that $A_a=\tilde{b}^w+T$. Moreover, $\tilde{b}$ has an asymptotic expansion $\sum_j p_j$ in $FS_{A_p,A_p,\rho}^{*,\infty}\left(\RR^{2d}\right)$, where $p_0=a(x,\xi)$ and
\beqs
p_j(x,\xi)=\sum_{2j-1\leq|\alpha+\beta|\leq2j}
\frac{c_{\alpha,\beta}}{\alpha!\beta!}\partial^{\alpha}_{\xi}\partial^{\beta}_x a(x,\xi)
\eeqs
where $\ds c_{\alpha,\beta}=\frac{1}{\pi^d}\int_{\RR^{2d}}\eta^{\alpha}y^{\beta} e^{-|y|^2-|\eta|^2}dyd\eta$.
\end{theorem}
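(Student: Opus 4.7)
The plan is to reduce to analysing a convolution and to produce the asymptotic expansion by Taylor expansion, then use the machinery of Section 2 to realise the formal symbol.

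First I would invoke Proposition \ref{245} to write $A_a = b^w$ with
$$b(x,\xi) = \pi^{-d}\int_{\RR^{2d}}a(x-y,\xi-\eta)\,e^{-|y|^2-|\eta|^2}\,dy\,d\eta.$$
It then suffices to exhibit $\tilde b \in \Gamma_{A_p,A_p,\rho}^{*,\infty}(\RR^{2d})$ with $b \sim \tilde b$ in $FS_{A_p,A_p,\rho}^{*,\infty}(\RR^{2d})$, since the earlier theorem on symbols equivalent to $0$ then makes $(b-\tilde b)^w$ $*$-regularizing. The candidate formal symbol is obtained by Taylor expanding $a$ about $(x,\xi)$ to order $2N$ and integrating against the Gaussian:
$$b(x,\xi) = \sum_{|\alpha+\beta|<2N}\frac{c_{\alpha,\beta}}{\alpha!\beta!}\partial^\alpha_\xi\partial^\beta_x a(x,\xi) + \pi^{-d}\!\int R_N(x,\xi;y,\eta)\,e^{-|y|^2-|\eta|^2}\,dy\,d\eta,$$
with $R_N$ the integral Taylor remainder of order $2N$. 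Because $c_{\alpha,\beta}=0$ whenever any component of $\alpha$ or $\beta$ is odd (and in particular whenever $|\alpha+\beta|$ is odd), the partial sum equals $\sum_{j<N}p_j$ with $p_j$ as in the statement; this explains why the grouping $2j-1\leq|\alpha+\beta|\leq 2j$ is natural.

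Next I would verify $\sum_j p_j \in FS_{A_p,A_p,\rho}^{*,\infty}(\RR^{2d})$. This is a bookkeeping argument: the Gamma-class bound on $\partial^\alpha_\xi\partial^\beta_x a$ gives the expected symbol behaviour of each $p_j$; the moment bounds $|c_{\alpha,\beta}| \leq C^{|\alpha|+|\beta|}\sqrt{\alpha!\beta!}$ following from the Gaussian formula together with $A_p$-version of $(M.2)$ absorb the combinatorial factors when differentiating $p_j$; and the finitely many terms per $p_j$ yield the required uniform estimate. Theorem \ref{150} then produces $\tilde b \in \Gamma_{A_p,A_p,\rho}^{*,\infty}(\RR^{2d})$ with $\tilde b \sim \sum_j p_j$.

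The hard part is showing $b \sim \sum_j p_j$, i.e.\ that the remainder integral, after being differentiated by an arbitrary $D^{\alpha'}_\xi D^{\beta'}_x$, satisfies the estimate in the definition of $\sim$ uniformly in $N$ on $Q^c_{Bm_N}$. Differentiation under the integral sign yields $\partial^{\alpha+\alpha'}_\xi\partial^{\beta+\beta'}_x a(x-ty,\xi-t\eta)$ with $|\alpha+\beta|=2N$, to which the Gamma-class estimate applies at the shifted point. I would split the $(y,\eta)$-integration into the region $|(y,\eta)|\leq \tfrac12|(x,\xi)|$, where $\langle(x-ty,\xi-t\eta)\rangle\gtrsim \langle(x,\xi)\rangle$ and the polynomial weight $\langle\cdot\rangle^{-\rho(2N+|\alpha'|+|\beta'|)}$ transfers directly, and its complement, where the Gaussian decay in $(y,\eta)$ absorbs any polynomial loss. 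The exponential factor is treated using the subadditivity $M(m|x-ty|) \leq M(m|x|)+M(m|y|)+\mathrm{const}$ and the elementary estimate $\int e^{M(m|y|)-|y|^2}\,dy<\infty$ for $m$ sufficiently small (equivalently, for $h$ sufficiently large in the $\{M_p\}$ case); $e^{-|y|^2-|\eta|^2}$ also absorbs the $y^{\beta}\eta^{\alpha}$ in $R_N$. Combining these with $(M.2)$ to split $A_{\alpha+\alpha'}A_{\beta+\beta'}$ into $H^{|\alpha|+|\alpha'|}A_\alpha A_{\alpha'}$ etc., and with Lemma \ref{69} to convert $e^{M(mm_N)}$ into a factor $H^{cN}$ that merges into $h^{2N}$, yields the bound required by the $\sim$-definition. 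Transitivity of $\sim$ with $\tilde b \sim \sum_j p_j$ then gives $b\sim\tilde b$ and completes the proof.
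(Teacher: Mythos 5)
Your proposal follows the paper's overall strategy closely: invoke Proposition \ref{245} to reduce $A_a$ to the Weyl quantization of the Gaussian convolution $b$, read off the formal series $\sum_j p_j$ from the Taylor expansion of $a$ at $(x,\xi)$ (using the parity observation $c_{\alpha,\beta}=0$ for $|\alpha+\beta|$ odd), verify $\sum_j p_j\in FS_{A_p,A_p,\rho}^{*,\infty}$ via the moment bound $|c_{\alpha,\beta}|\leq C\sqrt{|\alpha|!\,|\beta|!}$ and $(M.2)$ for $A_p$, and realize the series as $\tilde b\in\Gamma_{A_p,A_p,\rho}^{*,\infty}$ via Theorem \ref{150}. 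The estimation techniques you list for the remainder (Peetre-type comparison $\langle(x-ty,\xi-t\eta)\rangle^{-1}\leq\langle(y,\eta)\rangle\langle(x,\xi)\rangle^{-1}$, the Gaussian absorbing the polynomial and the $e^{M(m|\cdot|)}$ growth, $(M.2)$ to split $A_{\alpha+\alpha'}$, and Lemma \ref{69} to tame $e^{M(m m_N)}$) are the same ones the paper uses, and your remarks about choosing $R$ large and $m$ (resp.\ $h$) small to make things converge are the right knobs.

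Where you genuinely diverge is the final step, and there is a gap there. You propose to show $b\sim\sum_j p_j$, conclude $b\sim\tilde b$ by transitivity, and then invoke the earlier theorem ``$a\sim 0$ implies $\Op_\tau(a)$ is $*$-regularizing.'' But both the relation $\sim$ and that theorem are stated for formal series in $FS_{A_p,A_p,\rho}^{*,\infty}$, and for the one-term series $(b,0,0,\dots)$ to lie in $FS$ (equivalently, for $b-\tilde b\in\Gamma$) you must first establish that $b\in\Gamma_{A_p,A_p,\rho}^{*,\infty}$. This is not automatic from $a\in\Gamma$: $b$ is a priori only a tempered ultradistribution and the $\Gamma$-bound (including the $\langle(x,\xi)\rangle^{-\rho(|\alpha|+|\beta|)}$ factor and the $h^{|\alpha|+|\beta|}A_\alpha A_\beta$ rate) must be verified for the convolution, which you never do. The paper avoids this entirely: it never claims $b\in\Gamma$, but instead writes $b-\tilde b = \sum_{n\geq 0}(\chi_{n+1}-\chi_n)\bigl(b-\sum_{j\leq n}p_j\bigr)$ using the cutoffs $\chi_n$ from Theorem \ref{150}'s construction, replaces each $b-\sum_{j\leq n}p_j$ by the integral Taylor remainder $r_{2n+2}$, and shows that the resulting telescoping sum lies in $\SSS^*$ outright --- which makes $(b-\tilde b)^w$ $*$-regularizing through the kernel theorem, with no need for $b$ to be in the symbol class. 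Your route is repairable (the same convolution estimates you sketch would yield $b\in\Gamma$ if carried out globally rather than only on $Q^c_{Bm_N}$), but as written it skips a nontrivial prerequisite.
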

\begin{proof} First we will prove that $\sum_j p_j\in FS_{A_p,A_p,\rho}^{*,\infty}\left(\RR^{2d}\right)$. Note that $c_{\alpha,\beta}=0$ if $|\alpha+\beta|$ is odd. Hence
\beqs
p_j(x,\xi)=\sum_{|\alpha+\beta|=2j}
\frac{c_{\alpha,\beta}}{\alpha!\beta!}\partial^{\alpha}_{\xi}\partial^{\beta}_x a(x,\xi).
\eeqs
If we use the fact $|\eta|^k\leq \sqrt{k!}e^{|\eta|^2/2}$ we have $|c_{\alpha,\beta}|\leq c'\sqrt{|\alpha|!|\beta|!}$, where we put $\ds c'=\frac{1}{\pi^d}\int_{\RR^{2d}}e^{-|y|^2/2-|\eta|^2/2}dyd\eta$. For the derivatives of $p_j$ we have
\beqs
\left|D^{\gamma}_{\xi}D^{\delta}_x p_j(x,\xi)\right|
&\leq&C'_1\sum_{|\alpha+\beta|=2j}\frac{|c_{\alpha,\beta}|}{\alpha!\beta!}\cdot
\frac{h^{|\gamma|+|\delta|+2j}A_{\alpha+\gamma}A_{\beta+\delta}e^{M(m|\xi|)}e^{M(m|x|)}}
{\langle (x,\xi)\rangle^{\rho|\gamma|+\rho|\delta|+2\rho j}}\\
&\leq& C_1\sum_{|\alpha+\beta|=2j}\frac{d^{2j}}{\sqrt{|\alpha|!|\beta|!}}\cdot
\frac{(hH)^{|\gamma|+|\delta|+2j}A_{\gamma}A_{\delta}A_{2j}e^{M(m|\xi|)}e^{M(m|x|)}}
{\langle (x,\xi)\rangle^{\rho|\gamma|+\rho|\delta|+2\rho j}}\\
&\leq&C_2 2^{2j+2d-1}\frac{(hH)^{|\gamma|+|\delta|+2j}(dH)^{2j}A_{\gamma}A_{\delta}A_jA_je^{M(m|\xi|)}e^{M(m|x|)}}
{\langle (x,\xi)\rangle^{\rho|\gamma|+\rho|\delta|+2\rho j}},
\eeqs
i.e., we obtain $\ds\frac{\left|D^{\gamma}_{\xi}D^{\delta}_x p_j(x,\xi)\right|\langle(x,\xi)\rangle^{\rho|\gamma|+\rho|\delta|+2\rho j}
e^{-M(m|\xi|)}e^{-M(m|x|)}}{(2dhH^2)^{|\gamma|+|\delta|+2j}A_{\gamma}A_{\delta}A_jA_j}\leq C$, for all $(x,\xi)\in\RR^{2d}$, $\gamma,\delta\in\NN$, $j\in\NN$. Hence $\sum_j p_j\in FS_{A_p,A_p,\rho}^{*,\infty}\left(\RR^{2d}\right)$. Take $\chi_j$ as in the remark after theorem \ref{150} and define $\tilde{b}=\sum_j (1-\chi_j)p_j$. Then $\tilde{b}\in\Gamma_{A_p,A_p,\rho}^{*,\infty}\left(\RR^{2d}\right)$ and $\tilde{b}\sim\sum_jp_j$. It is enough to prove that $b-\tilde{b}\in\SSS^*$, for $b$ defined as in (\ref{247}). We have
\beqs
b(x,\xi)-\tilde{b}(x,\xi)=\chi_0(x,\xi)b(x,\xi)+\sum_{n=0}^{\infty}\left(\chi_{n+1}-\chi_n\right)
(x,\xi)\left(b(x,\xi)-\sum_{j=0}^n p_j(x,\xi)\right).
\eeqs
By definition, $\chi_0=0$. We Taylor expand $a$ and we obtain
\beqs
a(y,\eta)=\sum_{|\alpha|+|\beta|\leq 2n+1}\frac{1}{\alpha!\beta!}\partial^{\alpha}_{\xi}\partial^{\beta}_x a(x,\xi)(\eta-\xi)^{\alpha}(y-x)^{\beta}+r_{2n+2}(x,y,\xi,\eta),
\eeqs
where $r_{2n+2}$ is the reminder
\beqs
r_{2n+2}(x,y,\xi,\eta)&=&(2n+2)\sum_{|\alpha+\beta|=2n+2}\frac{1}{\alpha!\beta!}(\eta-\xi)^{\alpha}(y-x)^{\beta}\\
&{}&\hspace{30 pt}\cdot\int_0^1(1-t)^{2n+1}\partial^{\alpha}_{\xi}\partial^{\beta}_x a\left(x+t(y-x),\xi+t(\eta-\xi)\right)dt.
\eeqs
If we put this in the expression for $b-\tilde{b}$, keeping in mind the way we defined $p_j$, we obtain
\beqs
b(x,\xi)-\tilde{b}(x,\xi)=\frac{1}{\pi^d}\sum_{n=0}^{\infty}\left(\chi_{n+1}-\chi_n\right)
(x,\xi)\sum_{|\alpha+\beta|=2n+2}\frac{2n+2}{\alpha!\beta!}I_{\alpha,\beta}(x,\xi),
\eeqs
where we put
\beqs
I_{\alpha,\beta}(x,\xi)=\int_0^1\int_{\RR^{2d}}\eta^{\alpha}y^{\beta}
(1-t)^{2n+1}\partial^{\alpha}_{\xi}\partial^{\beta}_x a\left(x+ty,\xi+t\eta\right)e^{-|y|^2-|\eta|^2}dyd\eta dt.
\eeqs
We will estimate the derivatives of $I_{\alpha,\beta}$.\\
$\ds \left|\partial^{\gamma}_{\xi}\partial^{\delta}_x I_{\alpha,\beta}(x,\xi)\right|$
\beqs
&\leq&\int_0^1\int_{\RR^{2d}}|\eta|^{|\alpha|}|y|^{|\beta|}
\left|\partial^{\alpha+\gamma}_{\xi}\partial^{\beta+\delta}_x a\left(x+ty,\xi+t\eta\right)\right|e^{-|y|^2-|\eta|^2}dyd\eta dt\\
&\leq&C'_1\int_0^1\int_{\RR^{2d}}|\eta|^{|\alpha|}|y|^{|\beta|}
\frac{h^{|\gamma|+|\delta|+2n+2}A_{\alpha+\gamma}A_{\beta+\delta}e^{M(m|\xi+t\eta|)}e^{M(m|x+ty|)}}
{\langle (x+ty,\xi+t\eta)\rangle^{\rho|\gamma|+\rho|\delta|+(2n+2)\rho}}e^{-|y|^2-|\eta|^2}dyd\eta dt\\
&\leq&C'_1\int_0^1\int_{\RR^{2d}}
\frac{h^{|\gamma|+|\delta|+2n+2}A_{\gamma+\delta+2n+2}\langle(y,\eta)\rangle^{2n+2}e^{M(m|\xi+t\eta|)}e^{M(m|x+ty|)}}
{\langle (x+ty,\xi+t\eta)\rangle^{(2n+2)\rho}e^{|y|^2+|\eta|^2}}dyd\eta dt\\
&\leq&C''_1\int_0^1\int_{\RR^{2d}}
\frac{(2hL)^{|\gamma|+|\delta|+2n+2}M_{\gamma+\delta+2n+2}^{\rho}\langle(y,\eta)\rangle^{4n+4}
e^{M(m|\xi+t\eta|)}e^{M(m|x+ty|)}}{\langle (x,\xi)\rangle^{(2n+2)\rho}e^{|y|^2+|\eta|^2}}dyd\eta dt\\
&\leq&C_1\frac{\sqrt{(4n+4)!}(8hLH)^{|\gamma|+|\delta|+2n+2}M_{\gamma+\delta}M_{2n+2}^{\rho}}
{\langle (x,\xi)\rangle^{(2n+2)\rho}}
\int_0^1\int_{\RR^{2d}}\frac{e^{M(m|\xi+t\eta|)}e^{M(m|x+ty|)}}{e^{|y|^2/2+|\eta|^2/2}}dyd\eta dt,
\eeqs
where, in the last inequality, we used (\ref{250}). For shorter notations, we will denote the last integral by $\tilde{I}(x,\xi)$. Note that $\langle(x,\xi)\rangle\geq Rm_n$ on the support of $\chi_{n+1}-\chi_n$. For the derivatives of $\left(\chi_{n+1}-\chi_n\right)(x,\xi)I_{\alpha,\beta}(x,\xi)$, we have\\
$\ds \left|\partial^{\gamma}_{\xi}\partial^{\delta}_x \left(\left(\chi_{n+1}-\chi_n\right)
(x,\xi)I_{\alpha,\beta}(x,\xi)\right)\right|$
\beqs
&\leq&\sum_{\substack{\gamma'\leq\gamma\\ \delta'\leq\delta}}{\gamma\choose\gamma'}{\delta\choose\delta'}
\left|\partial^{\gamma-\gamma'}_{\xi}\partial^{\delta-\delta'}_x \left(\left(\chi_{n+1}-\chi_n\right)(x,\xi)\right)\right|\left|\partial^{\gamma'}_{\xi}\partial^{\delta'}_x
I_{\alpha,\beta}(x,\xi)\right|\\
&\leq&C_2\sum_{\substack{\gamma'\leq\gamma\\ \delta'\leq\delta}}{\gamma\choose\gamma'}{\delta\choose\delta'} \frac{h_1^{|\gamma|-|\gamma'|+|\delta|-|\delta'|}A_{\gamma-\gamma'}A_{\delta-\delta'}}
{(Rm_n)^{|\gamma|-|\gamma'|+|\delta|-|\delta'|}}\\
&{}&\hspace{30 pt}\cdot\frac{\sqrt{(4n+4)!}(8hLH)^{|\gamma'|+|\delta'|+2n+2}M_{\gamma'+\delta'}M_{2n+2}^{\rho}}
{(Rm_n)^{(2n+2)\rho}}\cdot\tilde{I}(x,\xi)\\
&\leq&C_3\sum_{\substack{\gamma'\leq\gamma\\ \delta'\leq\delta}}{\gamma\choose\gamma'}{\delta\choose\delta'} \frac{(h_1L)^{|\gamma|-|\gamma'|+|\delta|-|\delta'|}}
{(RM_1)^{|\gamma|-|\gamma'|+|\delta|-|\delta'|}}\\
&{}&\hspace{30 pt}\cdot\frac{\sqrt{(4n+4)!}(8hLH)^{|\gamma'|+|\delta'|+2n+2}H^{2n+2}M_{\gamma+\delta}M_{n+1}^{2\rho}}
{(Rm_n)^{(2n+2)\rho}}\cdot\tilde{I}(x,\xi)\\
&\leq&C_4 \left(\frac{h_1L}{RM_1}+8hLH\right)^{|\gamma|+|\delta|}
\frac{\sqrt{(4n+4)!}(8hLH^3)^{2n+2}M_{\gamma+\delta}M_n^{2\rho}}
{R^{(2n+2)\rho}m_n^{(2n+2)\rho}}\cdot\tilde{I}(x,\xi)\\
&\leq&C'_4 \left(\frac{h_1L}{RM_1}+8hLH\right)^{|\gamma|+|\delta|}
\frac{\sqrt{(4n+4)!}(8hLH^3)^{2n+2}M_{\gamma+\delta}}{R^{(2n+2)\rho}}\cdot\tilde{I}(x,\xi),
\eeqs
where, in the last inequality, we used that
\beqs
m_n^{n+1}\geq m_n\cdot...\cdot m_2\cdot m_1\cdot m_1=M_nM_1.
\eeqs
Let $m'>0$ be arbitrary but fixed. Then one easily proves that $e^{M(m'|(x,\xi)|)}\leq e^{M(m'(|x|+|\xi|))}\leq 2 e^{M(2m'|x|)}e^{M(2m'|\xi|)}$ (one easily proves that $e^{M(\lambda+\nu)}\leq 2e^{M(2\lambda)}e^{M(2\nu)}$). Then we have
\beqs
e^{M(m|\xi+t\eta|)}&=&e^{-M(2m'|\xi|)}e^{M(2m'|\xi|)}e^{M(m|\xi+t\eta|)}\leq 2e^{-M(2m'|\xi|)}e^{M(4m'|t\eta|)}e^{M(4m'|\xi+t\eta|)}e^{M(m|\xi+t\eta|)}\\
&\leq& c_1 e^{-M(2m'|\xi|)}e^{M(4m'|\eta|)}e^{M\left((m+4m')H|\xi+t\eta|\right)},
\eeqs
where, in the last inequality, we used proposition 3.6 of \cite{Komatsu1}. Similarly
\beqs
e^{M(m|x+ty|)}\leq c_1 e^{-M(2m'|x|)}e^{M(4m'|y|)}e^{M\left((m+4m')H|x+ty|\right)}.
\eeqs
Obviously $e^{M(4m'|\eta|)}\leq c_2e^{|\eta|^2/4}$ and $e^{M(4m'|y|)}\leq c_2e^{|y|^2/4}$ for some $c_2>0$ which depends only on $M_p$ and $m'$. We obtain
\beqs
\tilde{I}(x,\xi)\leq c_3e^{-M(m'|(x,\xi)|)}
\int_0^1\left(\int_{\RR^d}\frac{e^{M\left((m+4m')H|x+ty|\right)}}{e^{|y|^2/4}}dy\cdot \int_{\RR^d}\frac{e^{M\left((m+4m')H|\xi+t\eta|\right)}}{e^{|\eta|^2/4}}d\eta\right) dt.
\eeqs
Note that, when $|y|\leq |x|$ we have $e^{M\left((m+4m')H|x+ty|\right)}\leq e^{M\left(2(m+4m')H|x|\right)}\leq e^{M\left(6(m+4m')HRm_{n+1}\right)}$, on the support of $\chi_{n+1}-\chi_n$ (where $|x|\leq 3Rm_{n+1}$). When $|y|>|x|$ we have $e^{M\left((m+4m')H|x+ty|\right)}\leq e^{M\left(2(m+4m')H|y|\right)}\leq c_4e^{|y|^2/8}$, for some $c_4>0$. We obtain\\
$\ds \int_{\RR^d}\frac{e^{M\left((m+4m')H|x+ty|\right)}}{e^{|y|^2/4}}dy$
\beqs
&=&\int_{|y|\leq |x|}\frac{e^{M\left((m+4m')H|x+ty|\right)}}{e^{|y|^2/4}}dy+
\int_{|y|>|x|}\frac{e^{M\left((m+4m')H|x+ty|\right)}}{e^{|y|^2/4}}dy\\
&\leq& e^{M\left(6(m+4m')HRm_{n+1}\right)}\int_{|y|\leq |x|}\frac{1}{e^{|y|^2/4}}dy+
c_4\int_{|y|>|x|}\frac{1}{e^{|y|^2/8}}dy\leq c_5e^{M\left(6(m+4m')HRm_{n+1}\right)}.
\eeqs
We can obtain similar estimate for the other integral. By lemma \ref{69}, we have
\beqs
e^{M\left(6(m+4m')HRm_{n+1}\right)}\leq c_0 H^{2(n+1)\left(6c_0(m+4m')HR+2\right)}.
\eeqs
So, we have
\beqs
\tilde{I}(x,\xi)\leq c_6e^{-M(m'|(x,\xi)|)}e^{2M\left(6(m+4m')HRm_{n+1}\right)}\leq c_7e^{-M(m'|(x,\xi)|)}H^{4(n+1)\left(6c_0(m+4m')HR+2\right)}
\eeqs
on the support of $\chi_{n+1}-\chi_n$. If we insert this in the estimates for the derivatives of the terms $\left(\chi_{n+1}-\chi_n\right)(x,\xi)I_{\alpha,\beta}(x,\xi)$, we obtain\\
$\ds \left|\partial^{\gamma}_{\xi}\partial^{\delta}_x \left(\left(\chi_{n+1}-\chi_n\right)
(x,\xi)I_{\alpha,\beta}(x,\xi)\right)\right|$
\beqs
&\leq&C_5 \left(\frac{h_1L}{RM_1}+8hLH\right)^{|\gamma|+|\delta|}
\frac{\sqrt{(4n+4)!}(8hLH^3)^{2n+2}M_{\gamma+\delta}}{R^{(2n+2)\rho}}\\
&{}&\hspace{130 pt}\cdot e^{-M(m'|(x,\xi)|)}H^{4(n+1)\left(6c_0(m+4m')HR+2\right)}.
\eeqs
We will consider first the $(M_p)$ case. Take $R$ such that $RM_1\geq L$ and $32d/R^{\rho}\leq 1/2$. Then choose $h_1$ such that $h_1\leq 1/(2m')$ and $h$ such that $8hLH^{3+2\left(6c_0(m+4m')HR+2\right)}\leq1$ and $8hLH\leq 1/(2m')$. Note that, the choice of $R$ (and hence $\chi_j$) doesn't depend on $m'$, only on $A_p$, $M_p$ and $a$. For $|\alpha+\beta|=2n+2$, we have
\beqs
\alpha!\beta!\geq\frac{|\alpha|!|\beta|!}{d^{2n+2}}\geq \frac{(2n+2)!}{(2d)^{2n+2}}.
\eeqs
Also, $\sqrt{(4n+4)!}\leq 2^{2n+2}(2n+2)!$. Now we obtain\\
$\ds\sum_{|\alpha+\beta|=2n+2}\frac{2n+2}{\alpha!\beta!}\left|\partial^{\gamma}_{\xi}\partial^{\delta}_x \left(\left(\chi_{n+1}-\chi_n\right)
(x,\xi)I_{\alpha,\beta}(x,\xi)\right)\right|$
\beqs
&\leq&C_5\sum_{|\alpha+\beta|=2n+2}\frac{2^{2n+2}(2d)^{2n+2}}{(2n+2)!}
\left(\frac{h_1L}{RM_1}+8hLH\right)^{|\gamma|+|\delta|}\frac{2^{2n+2}(2n+2)!M_{\gamma+\delta}}{R^{(2n+2)\rho}}
e^{-M(m'|(x,\xi)|)}\\
&\leq&C_5e^{-M(m'|(x,\xi)|)}\frac{M_{\gamma+\delta}}{m'^{|\gamma|+|\delta|}}\cdot
\left(\frac{8d}{R^{\rho}}\right)^{2n+2}\cdot 2^{2n+2+2d-1}
\leq C_6\frac{M_{\gamma+\delta}}{m'^{|\gamma|+|\delta|}}e^{-M(m'|(x,\xi)|)}\cdot\frac{1}{4^{2n+2}},
\eeqs
where, in the last inequality, we put $C_6=2^{2d-1}C_5$. Hence, for the derivatives of
\beqs
\sum_{n=0}^{\infty}\left(\chi_{n+1}-\chi_n\right)
(x,\xi)\sum_{|\alpha+\beta|=2n+2}\frac{2n+2}{\alpha!\beta!}I_{\alpha,\beta}(x,\xi),
\eeqs
we obtain the estimate $\ds C\frac{M_{\gamma+\delta}}{m'^{|\gamma|+|\delta|}}e^{-M(m'|(x,\xi)|)}$ and by the arbitrariness of $m'$, it follows that it is a $\SSS^{(M_p)}$ function. Let us consider the $\{M_p\}$ case. Take $R$ such that $\ds \frac{256dhLH^9}{R^{\rho}}\leq \frac{1}{2}$. Then, choose $m$ and $m'$ such that $6c_0(m+4m')HR\leq 1$. Then we have\\
$\ds \left|\partial^{\gamma}_{\xi}\partial^{\delta}_x \left(\left(\chi_{n+1}-\chi_n\right)
(x,\xi)I_{\alpha,\beta}(x,\xi)\right)\right|$
\beqs
&\leq&C_5 \left(\frac{h_1L}{RM_1}+8hLH\right)^{|\gamma|+|\delta|}
\frac{\sqrt{(4n+4)!}(8hLH^3)^{2n+2}M_{\gamma+\delta}}{R^{(2n+2)\rho}}\\
&{}&\hspace{130 pt}\cdot e^{-M(m'|(x,\xi)|)}H^{4(n+1)\left(6c_0(m+4m')HR+2\right)}\\
&\leq& C_5 \left(\frac{h_1L}{RM_1}+8hLH\right)^{|\gamma|+|\delta|}
\frac{\sqrt{(4n+4)!}(8hLH^3)^{2n+2}M_{\gamma+\delta}}{R^{(2n+2)\rho}}\cdot e^{-M(m'|(x,\xi)|)}H^{12(n+1)}.
\eeqs
So\\
$\ds\sum_{|\alpha+\beta|=2n+2}\frac{2n+2}{\alpha!\beta!}\left|\partial^{\gamma}_{\xi}\partial^{\delta}_x \left(\left(\chi_{n+1}-\chi_n\right)
(x,\xi)I_{\alpha,\beta}(x,\xi)\right)\right|$
\beqs
&\leq&C_5\sum_{|\alpha+\beta|=2n+2}
\left(\frac{h_1L}{RM_1}+8hLH\right)^{|\gamma|+|\delta|}\frac{(8d)^{2n+2}(8hLH^9)^{2n+2}M_{\gamma+\delta}}
{R^{(2n+2)\rho}}e^{-M(m'|(x,\xi)|)}\\
&\leq&C_5e^{-M(m'|(x,\xi)|)}M_{\gamma+\delta}
\left(\frac{h_1L}{RM_1}+8hLH\right)^{|\gamma|+|\delta|}\cdot
\left(\frac{64dhLH^9}{R^{\rho}}\right)^{2n+2}\cdot 2^{2n+2+2d-1}\\
&\leq& C_6M_{\gamma+\delta}\left(\frac{h_1L}{RM_1}+8hLH\right)^{|\gamma|+|\delta|}
e^{-M(m'|(x,\xi)|)}\cdot\frac{1}{4^{2n+2}}.
\eeqs
Hence, for the derivatives of
\beqs
\sum_{n=0}^{\infty}\left(\chi_{n+1}-\chi_n\right)
(x,\xi)\sum_{|\alpha+\beta|=2n+2}\frac{2n+2}{\alpha!\beta!}I_{\alpha,\beta}(x,\xi),
\eeqs
we obtain the estimate $\ds C M_{\gamma+\delta}\frac{1}{m''^{|\gamma|+|\delta|}}e^{-M(m'|(x,\xi)|)}$, where we put $\ds \frac{1}{m''}=\frac{h_1L}{RM_1}+8hLH$, i.e. it is a $\SSS^{\{M_p\}}$ function. In both cases we obtain that $b-\tilde{b}\in\SSS^*$, which completes the proof.
\end{proof}

Now we want to represent the Weyl quantization of $b\in\Gamma_{A_p,A_p,\rho}^{*,\infty}\left(\RR^{2d}\right)$ by an Anti-Wick operator $A_a$, for some $a\in\Gamma_{A_p,A_p,\rho}^{*,\infty}\left(\RR^{2d}\right)$. First we will prove the following technical lemma.

\begin{lemma}
Let $\sum_k q^{(j)}_k\in FS_{A_p,A_p,\rho}^{*,\infty}\left(\RR^{2d}\right)$ for all $j\in\NN$, such that $q^{(j)}_0=...=q^{(j)}_{j-1}=0$. Assume that there exist $m>0$ and $B>0$, resp. $h>0$ and $B>0$, such that $\sum_k q^{(j)}_k\in FS_{A_p,A_p,\rho}^{(M_p),\infty}\left(\RR^{2d};B,m\right)$ for all $j\in\NN$, resp. $\sum_k q^{(j)}_k\in FS_{A_p,A_p,\rho}^{\{M_p\},\infty}\left(\RR^{2d};B,h\right)$ for all $j\in\NN$. Moreover, assume that the constants $C_{j,h}$, resp. $C_{j,m}$, in
\beqs
\sup_{k\in\NN}\sup_{\alpha,\beta}\sup_{(x,\xi)\in Q_{Bm_k}^c}\frac{\left|D^{\alpha}_{\xi}D^{\beta}_x q^{(j)}_k(x,\xi)\right|
\langle (x,\xi)\rangle^{\rho|\alpha|+\rho|\beta|+2k\rho}e^{-M(m|\xi|)}e^{-M(m|x|)}}
{h^{|\alpha|+|\beta|+2k}A_{\alpha}A_{\beta}A_kA_k}=C_{j,h}<\infty
\eeqs
resp. the same with $C_{j,m}$ in place of $C_{j,h}$ in the $\{M_p\}$ case, are bounded for all $j$, i.e. $\ds\sup_j C_{j,h}=C_h<\infty$, resp. $\ds\sup_j C_{j,m}=C_m<\infty$. Then, there exist $p_j\in \mathcal{C}^{\infty}\left(\RR^{2d}\right)$ such that $p_j\sim\sum_k q^{(j)}_k$, for all $j\in\NN$ and $\sum_j p_j\in FS_{A_p,A_p,\rho}^{*,\infty}\left(\RR^{2d}\right)$. Moreover, $\ds\sum_{j=0}^{\infty} p_j\sim\sum_{j=0}^{\infty}\sum_{l=0}^jq^{(l)}_j$ in $FS_{A_p,A_p,\rho}^{*,\infty}\left(\RR^{2d}\right)$.
\end{lemma}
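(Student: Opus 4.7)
The plan is to construct each $p_j$ by the same cutoff realization used in the remark after Theorem~\ref{150}, but with \emph{one common cutoff scale} $R$ (independent of $j$) chosen large enough that the single-index collection $\{p_j\}_j$ automatically satisfies the estimate for membership in $FS^{*,\infty}_{A_p,A_p,\rho}(\RR^{2d})$, and so that the tail bounds combine correctly in the end.

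Concretely, with $\chi=\varphi\psi$ as in Theorem~\ref{150}, set $\chi_k^{(j)}(x,\xi)=\chi(x/(Rm_k),\xi/(Rm_k))$ for $k\geq 1$ (and $\chi_0^{(j)}=0$), and define
$$p_j(x,\xi)=\sum_{k\geq j}\bigl(1-\chi_k^{(j)}(x,\xi)\bigr)\,q^{(j)}_k(x,\xi),$$
the sum running over $k\geq j$ since $q^{(j)}_k=0$ for $k<j$; local finiteness gives $p_j\in C^\infty(\RR^{2d})$. Replaying the Theorem~\ref{150} argument for each fixed $j$ gives $p_j\sim\sum_k q^{(j)}_k$, and the threshold required for $R$ is uniform in $j$ because it depends only on $A_p,M_p,B,m$ (resp.\ $B,h$) and on the common bound $C_h$ (resp.\ $C_m$) supplied by the hypothesis.

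For membership $\{p_j\}\in FS^{*,\infty}_{A_p,A_p,\rho}(\RR^{2d})$, I estimate $D^\alpha_\xi D^\beta_x p_j$ on $Q^c_{B'm_j}$ by distributing derivatives between $(1-\chi_k^{(j)})$ (whose derivatives scale like $(Rm_k)^{-|\gamma+\delta|}$) and $q^{(j)}_k$, using the hypothesis's pointwise bound for $q^{(j)}_k$ valid on its support $\{\langle(x,\xi)\rangle\geq 2Rm_k\}$. The factor $A_kA_k$ is split via $(M.2)$ as $H^{2k}A_jA_j\cdot A_{k-j}A_{k-j}$, the factor $\langle(x,\xi)\rangle^{-2k\rho}$ as $(2Rm_k)^{-2(k-j)\rho}\langle(x,\xi)\rangle^{-2j\rho}$ on the support of $(1-\chi_k^{(j)})$, and $A_{k-j}^2\leq c'L^{2(k-j)}m_k^{2(k-j)\rho}$ (using $A_p\subset M_p^\rho$ and monotonicity of $m_n$) absorbs the $m_k$ factor. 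The remaining sum becomes a geometric series $\sum_{k\geq j}(C/R^{2\rho})^{k-j}$, convergent for $R$ large, giving the desired bound with parameters $(B',m',h')$ adjusted from $(B,m,h)$.

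For the asymptotic $\sum_j p_j\sim\sum_n r_n$ with $r_n=\sum_{l\leq n}q^{(l)}_n$, I use the algebraic identity
$$\sum_{j<N}p_j-\sum_{n<N}r_n\;=\;-\sum_{j<N}\sum_{j\leq k<N}\chi_k^{(j)}q^{(j)}_k\;+\;\sum_{j<N}\sum_{k\geq N}(1-\chi_k^{(j)})q^{(j)}_k$$
and estimate on $Q^c_{B''m_N}$. For $B''$ large, the supports of $\chi_k^{(j)}$ with $k<N$ are disjoint from $Q^c_{B''m_N}$, so the first double sum vanishes; the second is controlled by the Step~3 estimate specialized to $k\geq N$, yielding a tail bound of order $\langle(x,\xi)\rangle^{-2N\rho}$ with prefactor $A_NA_N$ as required. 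The main technical obstacle is the arithmetic bookkeeping in Step~3 (especially the Roumieu-case parameter adjustments), where the uniformity hypothesis $\sup_j C_{j,h}<\infty$ (resp.\ $\sup_j C_{j,m}<\infty$) is essential to prevent the per-$j$ constants from destroying the geometric convergence in $k$.
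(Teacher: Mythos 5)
Your construction of $p_j$ via a common cutoff family $\{\chi_k\}$ with a single scale $R$, the $(M.2)$-splitting and support estimates yielding a geometric series, and the algebraic expansion for the final equivalence (together with the observation that the $\chi_k$ terms with $k<N$ vanish on $Q^c_{3Rm_N}$) mirror the paper's proof exactly. This is the same approach; the only difference is that the paper spells out the $S_1/S_2$ split, the precise powers of $8$, $H$, $(1+H)$, and the resulting bound $2N(c_0c'_0)^2CC_1\tilde C/64^N$ for the tail, while you leave that "arithmetic bookkeeping" implicit.
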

\begin{remark}
$p_j\sim \sum_k q^{(j)}_k$ should be understand as equivalence of the sums $\ds\underbrace{0+...+0}_{j}+p_j+0+...$ and $\sum_k q^{(j)}_k$.
\end{remark}
\begin{proof} Let $R\geq 2B$ and take $p_j$ as in the remark after theorem \ref{150}, i.e. $\ds p_j=\sum_{k=j}^{\infty}(1-\chi_k)q^{(j)}_k$, for $\chi_k$ constructed there. First, we consider the $(M_p)$ case. We will prove that $\sum_j p_j\in FS_{A_p,A_p,\rho}^{(M_p),\infty}\left(\RR^{2d};B,m\right)$, for sufficiently large $R$. Let $h>0$ be arbitrary but fixed. Obviously, without losing generality, we can assume that $h\leq 1$. For simplicity, denote $C_h$ by $C$. Using the fact that $1-\chi_k(x,\xi)=0$ for $(x,\xi)\in Q_{Rm_k}$, we have the estimate\\
$\ds\frac{\left|D^{\alpha}_{\xi}D^{\beta}_x p_j(x,\xi)\right|
\langle (x,\xi)\rangle^{\rho|\alpha|+\rho|\beta|+2\rho j}e^{-M(m|\xi|)}e^{-M(m|x|)}}
{(8hH)^{|\alpha|+|\beta|+2j}A_{\alpha}A_{\beta}A_jA_j}$
\beqs
&\leq&\sum_{k=j}^{\infty}\sum_{\substack{\gamma\leq\alpha\\ \delta\leq\beta}}{\alpha\choose\gamma}{\beta\choose\delta}\left|D^{\alpha-\gamma}_{\xi}D^{\beta-\delta}_x q^{(j)}_k(x,\xi)\right|e^{-M(m|\xi|)}e^{-M(m|x|)}\\
&{}&\hspace{130 pt}\cdot\frac{\left|D^{\gamma}_{\xi}D^{\delta}_x\left(1-\chi_k(x,\xi)\right)\right|
\langle (x,\xi)\rangle^{\rho|\alpha|+\rho|\beta|+2\rho j}}{(8hH)^{|\alpha|+|\beta|+2j}A_{\alpha}A_{\beta}A_jA_j}\\
&\leq&C\sum_{k=j}^{\infty}\sum_{\substack{\gamma\leq\alpha\\ \delta\leq\beta}}{\alpha\choose\gamma}
{\beta\choose\delta}\frac{h^{|\alpha|-|\gamma|+|\beta|-|\delta|+2k}A_{\alpha-\gamma}A_{\beta-\delta}A_kA_k}
{(8hH)^{|\alpha|+|\beta|+2j}A_{\alpha}A_{\beta}A_jA_j}\\
&{}&\hspace{130 pt} \cdot \langle (x,\xi)\rangle^{\rho|\gamma|+\rho|\delta|+2\rho j-2\rho k}
\left|D^{\gamma}_{\xi}D^{\delta}_x\left(1-\chi_k(x,\xi)\right)\right|\\
&\leq&(c_0c'_0)^2C\sum_{k=j}^{\infty}\frac{1}{8^{|\alpha|+|\beta|+2j}H^{2j}}
h^{2(k-j)}H^{2k}L^{2(k-j)}M_{k-j}^{2\rho}\left|1-\chi_k(x,\xi)\right|\langle (x,\xi)\rangle^{2\rho (j-k)}\\
&{}&+(c_0c'_0)^2C\sum_{k=j}^{\infty}\frac{1}{8^{|\alpha|+|\beta|+2j}H^{2j}}\sum_{\substack{\gamma\leq\alpha, \delta\leq\beta\\(\delta,\gamma)\neq(0,0)}}{\alpha\choose\gamma}{\beta\choose\delta}\\
&{}&\hspace{50 pt}\cdot
\frac{h^{2(k-j)}H^{2k}L^{2(k-j)}M_{k-j}^{2\rho}\left|D^{\gamma}_{\xi}D^{\delta}_x\left(1-\chi_k(x,\xi)\right)\right|
\langle (x,\xi)\rangle^{\rho|\gamma|+\rho|\delta|+2\rho j-2\rho k}}{h^{|\gamma|+|\delta|}A_{\gamma}A_{\delta}}\\
&=&S_1+S_2,
\eeqs
where $S_1$ and $S_2$ are the first and the second sum, correspondingly. To estimate $S_1$ note that, on the support of $1-\chi_k$, the inequality $\langle (x,\xi)\rangle\geq Rm_k$ holds. One obtains
\beqs
S_1\leq (c_0c'_0)^2C\sum_{k=j}^{\infty}\frac{(hLH)^{2(k-j)}M_{k-j}^{2\rho}}{R^{2\rho (k-j)}m_k^{2\rho (k-j)}}\leq (c_0c'_0)^2C\sum_{k=0}^{\infty}\frac{(hLH)^{2k}}{R^{2\rho k}}<\infty,
\eeqs
for $R^{\rho}\geq 2LH\geq 2hLH$ (in the second inequality we use the fact that $m_j^j\geq M_j$). For the estimate of $S_2$, note that $D^{\gamma}_{\xi}D^{\delta}_x\left(1-\chi_k(x,\xi)\right)=0$ when $(x,\xi)\in Q_{3Rm_k}^c$, because $(\delta,\gamma)\neq(0,0)$ and $\chi_k(x,\xi)=0$ on $Q_{3Rm_k}^c$. So, for $(x,\xi)\in Q_{3Rm_k}$, we have that $\langle(x,\xi)\rangle\leq \langle x\rangle+\langle \xi\rangle\leq 6Rm_k$. Moreover, from the construction of $\chi$, we have that for the chosen $h$, there exists $C_1>0$ such that $\ds \left|D^{\alpha}_{\xi}D^{\beta}_x \chi(x,\xi)\right|\leq C_1 h^{|\alpha|+|\beta|}A_{\alpha}A_{\beta}$. By using $m_k^k\geq M_k$, one obtains
\beqs
S_2&\leq& (c_0c'_0)^2CC_1\sum_{k=j}^{\infty}\frac{1}{8^{|\alpha|+|\beta|+2j}}\sum_{\substack{\gamma\leq\alpha, \delta\leq\beta\\(\delta,\gamma)\neq(0,0)}}{\alpha\choose\gamma}{\beta\choose\delta}
\frac{(hLH)^{2(k-j)}6^{\rho|\gamma|+\rho|\delta|}M_{k-j}^{2\rho}(Rm_k)^{\rho|\gamma|+\rho|\delta|}}
{R^{2\rho (k-j)}m_k^{2\rho (k-j)}(Rm_k)^{|\gamma|+|\delta|}}\\
&\leq& (c_0c'_0)^2CC_1\sum_{k=0}^{\infty}\frac{(hLH)^{2k}}{R^{2\rho k}},
\eeqs
which is convergent for $R^{\rho}\geq 2LH\geq 2hLH$. Moreover, note that the choice of $R$ for these sums to be convergent does not depend on $j$, hence $\chi_k$ can be chosen to be the same for all $p_j$. So, these estimates does not depend on $j$ and from this it follows that $\sum_j p_j\in FS_{A_p,A_p,\rho}^{(M_p),\infty}\left(\RR^{2d}\right)$ (actually, to be precise, $\sum_j p_j\in FS_{A_p,A_p,\rho}^{(M_p),\infty}\left(\RR^{2d};B,m\right)$, i.e. the same space as for $\sum_k q^{(j)}_k$).\\
\indent In the $\{M_p\}$ case, there exist $h_1,C_1>0$ such that $\ds\left|D^{\alpha}_{\xi}D^{\beta}_x \chi(x,\xi)\right|\leq C_1 h_1^{|\alpha|+|\beta|}A_{\alpha}A_{\beta}$. Arguing in similar fashion, one proves that $\sum_j p_j\in FS_{A_p,A_p,\rho}^{\{M_p\},\infty}\left(\RR^{2d};B,8\tilde{h}H\right)$, where $\tilde{h}=\max\{h,h_1\}$, i.e. $\sum_j p_j\in FS_{A_p,A_p,\rho}^{\{M_p\},\infty}\left(\RR^{2d}\right)$.\\
\indent It remains to prove the second part of the lemma. One easily proves that $\ds\sum_{j=0}^{\infty}\sum_{l=0}^jq^{(l)}_j\in FS_{A_p,A_p,\rho}^{*,\infty}\left(\RR^{2d}\right)$. Note that $\ds\sum_{j=0}^{N-1}p_j-\sum_{j=0}^{N-1}\sum_{l=0}^j q^{(l)}_j=\sum_{j=0}^{N-1}\left(p_j-\sum_{k=j}^{N-1}q^{(j)}_k\right)$. Moreover, for $(x,\xi)\in Q_{3Rm_N}^c$ and $N>j$, $\ds p_j-\sum_{k=j}^{N-1}q^{(j)}_k=\sum_{k=N}^{\infty} \left(1-\chi_k\right)q^{(j)}_k$. This easily follows from the definition of $\chi_k$ and the fact that $m_n$ is monotonically increasing. We will consider first the $(M_p)$ case. For arbitrary but fixed $0<h\leq 1$ and $(x,\xi)\in Q_{3Rm_N}^c$, we estimate as follows\\
$\ds \frac{\left|D^{\alpha}_{\xi}D^{\beta}_x\sum_{k=N}^{\infty}\left(1-\chi_k(x,\xi)\right)q^{(j)}_k(x,\xi)\right|
\langle(x,\xi)\rangle^{\rho|\alpha|+\rho|\beta|+2\rho N}e^{-M(m|\xi|)}e^{-M(m|x|)}}
{(8(1+H)h)^{|\alpha|+|\beta|+2N}A_{\alpha}A_{\beta}A_NA_N}$
\beqs
&\leq&\sum_{k=N}^{\infty}\frac{\left(1-\chi_k(x,\xi)\right)\left|D^{\alpha}_{\xi}D^{\beta}_x q^{(j)}_k(x,\xi)\right|\langle(x,\xi)\rangle^{\rho|\alpha|+\rho|\beta|+2\rho N}e^{-M(m|\xi|)}e^{-M(m|x|)}}
{(8(1+H)h)^{|\alpha|+|\beta|+2N}A_{\alpha}A_{\beta}A_NA_N}\\
&{}&+\sum_{k=N}^{\infty}\sum_{\substack{\gamma\leq\alpha,\delta\leq\beta\\(\delta,\gamma)\neq(0,0)}}
{\alpha\choose\gamma}{\beta\choose\delta}
\left|D^{\alpha-\gamma}_{\xi}D^{\beta-\delta}_x q^{(j)}_k(x,\xi)\right|e^{-M(m|\xi|)}e^{-M(m|x|)}\\
&{}&\hspace{130 pt}\cdot\frac{\left|D^{\gamma}_{\xi}D^{\delta}_x\left(1-\chi_k(x,\xi)\right)\right|
\langle(x,\xi)\rangle^{\rho|\alpha|+\rho|\beta|+2\rho N}}{(8(1+H)h)^{|\alpha|+|\beta|+2N}A_{\alpha}A_{\beta}A_NA_N}\\
&\leq&\frac{C}{64^N}\sum_{k=N}^{\infty}\frac{\left(1-\chi_k(x,\xi)\right)h^{2k-2N}A_kA_k}
{(1+H)^{2N}\langle(x,\xi)\rangle^{2\rho k-2\rho N}A_NA_N}\\
&{}&+\frac{C}{64^N}\sum_{k=N}^{\infty}\frac{1}{8^{|\alpha|+|\beta|}}
\sum_{\substack{\gamma\leq\alpha,\delta\leq\beta\\(\delta,\gamma)\neq(0,0)}}{\alpha\choose\gamma}{\beta\choose\delta}
\frac{h^{2k-2N}\left|D^{\gamma}_{\xi}D^{\delta}_x\left(1-\chi_k(x,\xi)\right)\right|
\langle(x,\xi)\rangle^{\rho|\gamma|+\rho|\delta|}A_kA_k}
{(1+H)^{2N}h^{|\gamma|+|\delta|}\langle(x,\xi)\rangle^{2\rho k-2\rho N}A_{\gamma}A_{\delta}A_NA_N}\\
&=&S_1+S_2,
\eeqs
where $S_1$ and $S_2$ are the first and the second sum, correspondingly. To estimate $S_1$, observe that on the support of $1-\chi_k$ the inequality $\langle(x,\xi)\rangle\geq Rm_k$ holds. Using the monotone increasingness of $m_n$ and $(M.2)$ for $A_p$, one obtains
\beqs
S_1&\leq& \frac{c_0^2C}{64^N}\sum_{k=N}^{\infty}\frac{h^{2k-2N}H^{2k}A_{k-N}A_{k-N}}
{(1+H)^{2N}R^{2\rho k-2\rho N}m_k^{2\rho k-2\rho N}}
\leq \frac{(c_0c'_0)^2C}{64^N}\sum_{k=N}^{\infty}\frac{h^{2k-2N}H^{2k}L^{2k-2N}M_{k-N}^{2\rho}}
{(1+H)^{2N}R^{2\rho k-2\rho N}m_{k-N}^{2\rho k-2\rho N}}\\
&=&\frac{(c_0c'_0)^2C}{64^N}\frac{H^{2N}}{(1+H)^{2N}}\sum_{k=0}^{\infty}\left(\frac{hHL}{R^{\rho}}\right)^{2k}
\leq \frac{(c_0c'_0)^2C}{64^N}\sum_{k=0}^{\infty}\left(\frac{HL}{R^{\rho}}\right)^{2k}=\frac{(c_0c'_0)^2C\tilde{C}}{64^N},
\eeqs
where we put $\ds \tilde{C}=\sum_{k=0}^{\infty}\left(\frac{HL}{R^{\rho}}\right)^{2k}$, for some fixed $R^{\rho}\geq 2HL$. For the sum $S_2$, observe that $D^{\gamma}_{\xi}D^{\delta}_x\left(1-\chi_k(x,\xi)\right)=0$ when $(x,\xi)\in Q_{3Rm_k}^c$, because $(\delta,\gamma)\neq(0,0)$ and $\chi_k(x,\xi)=0$ on $Q_{3Rm_k}^c$. Moreover, from the construction of $\chi$, we have that for the chosen $h$, there exists $C_1>1$ such that $\ds \left|D^{\alpha}_{\xi}D^{\beta}_x \chi(x,\xi)\right|\leq C_1 h^{|\alpha|+|\beta|}A_{\alpha}B_{\beta}$. Now
\beqs
S_2&\leq& \frac{c_0^2CC_1}{64^N}\sum_{k=N}^{\infty}
\frac{1}{8^{|\alpha|+|\beta|}}\sum_{\substack{\gamma\leq\alpha,\delta\leq\beta\\(\delta,\gamma)\neq(0,0)}}
{\alpha\choose\gamma}{\beta\choose\delta}
\frac{h^{2k-2N}6^{|\gamma|+|\delta|}H^{2k}A_{k-N}A_{k-N}}{(1+H)^{2N}R^{2\rho k-2\rho N}m_k^{2\rho k-2\rho N}}\\
&\leq& \frac{c_0^2CC_1}{64^N}\sum_{k=N}^{\infty}\frac{h^{2k-2N}H^{2k}A_{k-N}A_{k-N}}
{(1+H)^{2N}R^{2\rho k-2\rho N}m_k^{2\rho k-2\rho N}}\leq \frac{(c_0c'_0)^2CC_1\tilde{C}}{64^N},
\eeqs
where we used the above estimate for the last sum. So, we have\\
$\ds \frac{\left|D^{\alpha}_{\xi}D^{\beta}_x\sum_{j=0}^{N-1}\left(p_j(x,\xi)-\sum_{k=j}^{N-1}q^{(j)}_k(x,\xi)\right)\right|
\langle(x,\xi)\rangle^{\rho|\alpha|+\rho|\beta|+2\rho N}e^{-M(m|\xi|)}e^{-M(m|x|)}}
{(8(1+H)h)^{|\alpha|+|\beta|+2N}A_{\alpha}A_{\beta}A_NA_N}$
\beqs
&\leq&\sum_{j=0}^{N-1}\frac{\left|D^{\alpha}_{\xi}D^{\beta}_x\left(p_j(x,\xi)-
\sum_{k=j}^{N-1}q^{(j)}_k(x,\xi)\right)\right|
\langle(x,\xi)\rangle^{\rho|\alpha|+\rho|\beta|+2\rho N}e^{-M(m|\xi|)}e^{-M(m|x|)}}
{(8(1+H)h)^{|\alpha|+|\beta|+2N}A_{\alpha}A_{\beta}A_NA_N}\\
&\leq&\sum_{j=0}^{N-1}\frac{2(c_0c'_0)^2CC_1\tilde{C}}{64^N}=\frac{2N(c_0c'_0)^2CC_1\tilde{C}}{64^N},
\eeqs
which is bounded uniformly for all $N\in\ZZ_+$, for $(x,\xi)\in Q_{3Rm_N}^c$, $\alpha,\beta\in\NN^d$. The proof for the $\{M_p\}$ case is similar.
\end{proof}

\begin{theorem}
Let $b\in\Gamma_{A_p,A_p,\rho}^{*,\infty}\left(\RR^{2d}\right)$. There exist $a\in\Gamma_{A_p,A_p,\rho}^{*,\infty}\left(\RR^{2d}\right)$ and *-regularizing operator $T$ such that $b^w=A_a+T$.
\end{theorem}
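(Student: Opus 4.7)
The plan is to invert the asymptotic formula of Theorem \ref{260}. Denote by $L_j$ the linear differential operator
\[ L_j\varphi(x,\xi)=\sum_{|\alpha+\beta|=2j}\frac{c_{\alpha,\beta}}{\alpha!\beta!}\partial^\alpha_\xi\partial^\beta_x\varphi(x,\xi), \]
so that for $a\in\Gamma_{A_p,A_p,\rho}^{*,\infty}(\RR^{2d})$, Theorem \ref{260} reads $A_a=\tilde b^w+T$ with $\tilde b\sim\sum_j L_j(a)$ in $FS_{A_p,A_p,\rho}^{*,\infty}(\RR^{2d})$. The idea is to construct $a$ as an asymptotic sum of successive corrections to $b$, namely
\[ a_0=b,\qquad a_n=-\sum_{j=1}^n L_j(a_{n-j})\quad(n\geq 1), \]
so that, by construction, the anti-diagonal sum $\sum_{l+j=N}L_j(a_l)$ equals $b$ for $N=0$ and vanishes for every $N\geq 1$.

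First I would verify that $\sum_n a_n$ belongs to $FS_{A_p,A_p,\rho}^{*,\infty}(\RR^{2d})$. By induction, each $a_n$ is a finite linear combination of compositions $L_{j_1}\circ\cdots\circ L_{j_k}(b)$ with $j_1+\cdots+j_k=n$, and each application of $L_{j_i}$ introduces $2j_i$ derivatives and a gain $\langle(x,\xi)\rangle^{-2j_i\rho}$ coming from the symbol estimate for $b\in\Gamma$. Using the same Gaussian-moment bound $|c_{\alpha,\beta}|\leq c'\sqrt{|\alpha|!|\beta|!}$ together with repeated applications of $(M.2)$ for $A_p$, exactly in the manner of the estimates for $p_j$ in the proof of Theorem \ref{260}, one obtains a bound of the expected type $C h^{|\gamma|+|\delta|+2n}A_\gamma A_\delta A_nA_n\langle(x,\xi)\rangle^{-\rho(|\gamma|+|\delta|+2n)}e^{M(m|\xi|)+M(m|x|)}$; the number of partitions of $n$ contributing is only subexponential and is absorbed by enlarging $h$. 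Theorem \ref{150} then furnishes $a\in\Gamma_{A_p,A_p,\rho}^{*,\infty}(\RR^{2d})$ with $a\sim\sum_n a_n$, and Theorem \ref{260} applied to this $a$ gives a $\tilde b\in\Gamma$ and a $*$-regularizing $T$ with $A_a=\tilde b^w+T$ and $\tilde b\sim\sum_j L_j(a)$.

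It remains to show $\tilde b\sim b$, which is where the preceding lemma enters. Set $q^{(l)}_k=L_{k-l}(a_l)$ for $k\geq l$ and $q^{(l)}_k=0$ for $k<l$. Using the estimates from the previous paragraph, each $\sum_k q^{(l)}_k$ lies in $FS_{A_p,A_p,\rho}^{*,\infty}(\RR^{2d})$ with constants bounded uniformly in $l$, which is exactly the hypothesis of the lemma. Its conclusion produces $\tilde p_l\in\mathcal C^\infty$ with $\tilde p_l\sim\sum_k q^{(l)}_k$ and $\sum_l\tilde p_l\in FS$, together with
\[ \sum_l\tilde p_l\;\sim\;\sum_{N}\sum_{l=0}^{N}q^{(l)}_N\;=\;\sum_N\sum_{l=0}^N L_{N-l}(a_l)\;=\;b, \]
the last equality being the defining property of the recursion. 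On the other hand, using once more that $a\sim\sum_n a_n$ and that $L_j$ is a fixed differential operator, one checks that $\sum_j L_j(a)$ is equivalent in $FS$ to $\sum_l\tilde p_l$; hence $\tilde b\sim b$, so $(\tilde b-b)^w$ is $*$-regularizing and
\[ b^w=\tilde b^w-(\tilde b-b)^w=A_a-T-(\tilde b-b)^w=A_a+T' \]
with $T'$ $*$-regularizing, as required. The main obstacle is the bookkeeping in the second paragraph: proving the uniform-in-$n$ (and then uniform-in-$l$) control of the constants arising from the compositions $L_{j_1}\circ\cdots\circ L_{j_k}(b)$, so that the recursion actually yields a symbol in $FS$ and the hypotheses of the preceding lemma are met.
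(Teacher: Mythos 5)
Your proposal is correct and follows essentially the same route as the paper: the recursion $a_n=-\sum_{j=1}^n L_j(a_{n-j})$ is precisely a repackaging of the paper's $a\sim\sum_j(-1)^j b_j$ with $b_j\sim\sum_k p'_{k,j}$ (unwinding your recursion gives $a_n=\sum_k(-1)^k p'_{n,k}$), and the rest of the argument — Gaussian-moment bounds on $c_{\alpha,\beta}$, $(M.2)$ to control iterated derivatives, the resummation lemma, and Theorem \ref{260} applied to the constructed $a$ — coincides with the paper's. The only cosmetic difference is that you define each $a_n$ directly as a finite sum and invoke the lemma once at the end (to show $\tilde b\sim b$), whereas the paper first realizes each $b_j$ via the lemma and then verifies the telescoping identity; the combinatorial bookkeeping you defer to is exactly where the paper's proof spends its effort.
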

\begin{proof} Put $p'_{0,0}=b$ and $p'_{k,0}=0$ for all $k\in\ZZ_+$. For $j\in\ZZ_+$, define $p'_{0,j}=...=p'_{j-1,j}=0$ and
\beqs
p'_{k,j}(x,\xi)&=&\sum_{\substack{l_1+l_2+...+l_j=k\\ l_1\geq 1,...,l_j\geq 1}} \sum_{|\alpha^{(1)}+\beta^{(1)}|=2l_1,...,|\alpha^{(j)}+\beta^{(j)}|=2l_j}
\frac{c_{\alpha^{(1)},\beta^{(1)}}\cdot...\cdot c_{\alpha^{(j)},\beta^{(j)}}}{\alpha^{(1)}!\beta^{(1)}!\cdot...\cdot\alpha^{(j)}!\beta^{(j)}!}\\
&{}&\hspace{150 pt}\cdot\partial^{\alpha^{(1)}+...+\alpha^{(j)}}_{\xi}\partial^{\beta^{(1)}+...+\beta^{(j)}}_x b(x,\xi),
\eeqs
for $k\geq j$, $k\in\ZZ_+$. We will prove that $\sum_k p'_{k,j}$ is an element of $FS_{A_p,A_p,\rho}^{*,\infty}\left(\RR^{2d}\right)$. To do this note that, for $k\geq j$,\\
$\ds \left|\partial^{\gamma+\alpha^{(1)}+...+\alpha^{(j)}}_{\xi}\partial^{\delta+\beta^{(1)}+...+\beta^{(j)}}_x b(x,\xi)\right|$
\beqs
&\leq&c_0^2\|b\|_{h,m,\Gamma}\frac{h^{|\gamma|+|\delta|+2k}H^{|\gamma|+|\delta|+2k}A_{\gamma}A_{\delta}A_{2k}
e^{M(m|\xi|)}e^{M(m|x|)}}{\langle(x,\xi)\rangle^{\rho|\gamma|+\rho|\delta|+2\rho k}}\\
&\leq&c_0^3\|b\|_{h,m,\Gamma}\frac{(hH^2)^{|\gamma|+|\delta|+2k}A_{\gamma}A_{\delta}A_kA_k
e^{M(m|\xi|)}e^{M(m|x|)}}{\langle(x,\xi)\rangle^{\rho|\gamma|+\rho|\delta|+2\rho k}}.
\eeqs
If we use the same estimates as in the beginning of the proof of theorem \ref{260}, we have
\beq\label{290}
\frac{|c_{\alpha^{(s)},\beta^{(s)}}|}{\alpha^{(s)}!\beta^{(s)}!}\leq \frac{c'd^{2l_s}}{\sqrt{|\alpha^{(s)}|!|\beta^{(s)}|!}}\leq c'd^{2l_s},
\eeq
for all $s\in\{1,...,j\}$, where $\ds c'=\frac{1}{\pi^d}\int_{\RR^{2d}}e^{-|y|^2/2-|\eta|^2/2}dyd\eta$. Hence
\beqs
\frac{|c_{\alpha^{(1)},\beta^{(1)}}|\cdot...\cdot |c_{\alpha^{(j)},\beta^{(j)}}|}{\alpha^{(1)}!\beta^{(1)}!\cdot...\cdot\alpha^{(j)}!\beta^{(j)}!}\leq c'^jd^{2k}\leq (c'd^2)^k.
\eeqs
The number of ways we can choose the positive integers $l_1,...,l_j$ such that $l_1+...+l_j=k$ is $\ds{{k-1}\choose{j-1}}$. For every fixed $l_1,...,l_j$, we have
\beqs
\sum_{|\alpha^{(s)}+\beta^{(s)}|=2l_s}1={{2l_s+2d-1}\choose{2d-1}}\leq 2^{2l_s+2d-1}=2^{2d-1}4^{l_s},
\eeqs
for $s\in\{1,...,j\}$. So, if we use that $k\geq j$, we have
\beqs
\sum_{\substack{l_1+l_2+...+l_j=k\\ l_1\geq 1,...,l_j\geq 1}} \sum_{|\alpha^{(1)}+\beta^{(1)}|=2l_1,...,|\alpha^{(j)}+\beta^{(j)}|=2l_j}
1\leq 2^{j(2d-1)}4^k{{k-1}\choose{j-1}}\leq 2^{k(2d-1)}4^k2^{k-1}\leq2^{k(2d+2)}.
\eeqs
We obtain
\beqs
\sum_{\substack{l_1+l_2+...+l_j=k\\ l_1\geq 1,...,l_j\geq 1}} \sum_{|\alpha^{(1)}+\beta^{(1)}|=2l_1,...,|\alpha^{(j)}+\beta^{(j)}|=2l_j}
\frac{|c_{\alpha^{(1)},\beta^{(1)}}|\cdot...\cdot |c_{\alpha^{(j)},\beta^{(j)}}|}{\alpha^{(1)}!\beta^{(1)}!\cdot...\cdot\alpha^{(j)}!\beta^{(j)}!}\leq \left(c'2^{2d+2}d^2\right)^k,
\eeqs
i.e.
\beqs
\frac{\left|D^{\gamma}_{\xi}D^{\delta}_x p'_{k,j}(x,\xi)\right|
\langle(x,\xi)\rangle^{\rho|\gamma|+\rho|\delta|+2\rho k}
e^{-M(m|\xi|)}e^{-M(m|x|)}}{\left(c'2^{2d+2}d^2hH^2\right)^{|\gamma|+|\delta|+2k}A_{\gamma}A_{\delta}A_kA_k}\leq c_0^3\|b\|_{h,m,\Gamma},
\eeqs
for all $(x,\xi)\in\RR^{2d}$, $\gamma,\delta\in\NN^d$, $k\in\NN$ (for $k<j$, $p'_{k,j}=0$). So $\sum_k p'_{k,j}\in FS_{A_p,A_p,\rho}^{*,\infty}\left(\RR^{2d}\right)$. Note that $c_0^3\|b\|_{h,m}$ does not depend on $j$, i.e. the estimates are uniform in $j$. By the above lemma, there exist $\mathcal{C}^{\infty}$ functions $b_j$ such that $b_j\sim\sum_k p'_{k,j}$, for $j\in\NN$ and $\sum_j b_j\in FS_{A_p,A_p,\rho}^{*,\infty}\left(\RR^{2d}\right)$. Note that, by the construction in the lemma and the way we define $p'_{k,j}$, $b_0=p'_{0,0}=b$. By theorem \ref{150}, there exists $a\in \Gamma_{A_p,A_p,\rho}^{*,\infty}\left(\RR^{2d}\right)$ such that $a\sim\sum_j (-1)^jb_j$. We will prove that this $a$ satisfies the conditions in the theorem. By theorem \ref{260}, there exist $c\in\Gamma_{A_p,A_p,\rho}^{*,\infty}\left(\RR^{2d}\right)$ and *-regularizing operator $T_1$ such that $A_a=c^w+T_1$ and $\ds c\sim\sum_{j=0}^{\infty}\sum_{|\alpha+\beta|=2j}
\frac{c_{\alpha,\beta}}{\alpha!\beta!}\partial^{\alpha}_{\xi}\partial^{\beta}_x a(x,\xi)$. One obtains
\beqs
c\sim\sum_{j=0}^{\infty}\sum_{l+k=j}\sum_{|\alpha+\beta|=2l}(-1)^k
\frac{c_{\alpha,\beta}}{\alpha!\beta!}\partial^{\alpha}_{\xi}\partial^{\beta}_x b_k(x,\xi).
\eeqs
To prove this, first, by using $\sum_j (-1)^jb_j\in FS_{A_p,A_p,\rho}^{*,\infty}\left(\RR^{2d}\right)$ and (\ref{290}), one easily verifies that the sum is an element of $FS_{A_p,A_p,\rho}^{*,\infty}\left(\RR^{2d}\right)$. Note that\\
$\ds \sum_{j=0}^{N-1}\sum_{|\alpha+\beta|=2j}
\frac{c_{\alpha,\beta}}{\alpha!\beta!}\partial^{\alpha}_{\xi}\partial^{\beta}_x a(x,\xi)-\sum_{j=0}^{N-1}\sum_{l=0}^{j}\sum_{|\alpha+\beta|=2l}(-1)^{j-l}
\frac{c_{\alpha,\beta}}{\alpha!\beta!}\partial^{\alpha}_{\xi}\partial^{\beta}_x b_{j-l}(x,\xi)$
\beqs
&=&\sum_{j=0}^{N-1}\sum_{|\alpha+\beta|=2j}
\frac{c_{\alpha,\beta}}{\alpha!\beta!}\partial^{\alpha}_{\xi}\partial^{\beta}_x a(x,\xi)-\sum_{l=0}^{N-1}\sum_{j=l}^{N-1}\sum_{|\alpha+\beta|=2l}(-1)^{j-l}
\frac{c_{\alpha,\beta}}{\alpha!\beta!}\partial^{\alpha}_{\xi}\partial^{\beta}_x b_{j-l}(x,\xi)\\
&=&\sum_{j=0}^{N-1}\sum_{|\alpha+\beta|=2j}
\frac{c_{\alpha,\beta}}{\alpha!\beta!}\partial^{\alpha}_{\xi}\partial^{\beta}_x a(x,\xi)-\sum_{j=0}^{N-1}\sum_{l=j}^{N-1}\sum_{|\alpha+\beta|=2j}(-1)^{l-j}
\frac{c_{\alpha,\beta}}{\alpha!\beta!}\partial^{\alpha}_{\xi}\partial^{\beta}_x b_{l-j}(x,\xi)\\
&=&\sum_{j=0}^{N-1}\sum_{|\alpha+\beta|=2j}\frac{c_{\alpha,\beta}}{\alpha!\beta!}
\partial^{\alpha}_{\xi}\partial^{\beta}_x\left(a(x,\xi)-\sum_{s=0}^{N-j-1}(-1)^{s}b_{s}(x,\xi)\right).
\eeqs
By using that $a\sim\sum_j (-1)^jb_j$ and the inequality (\ref{290}), one easily proves the desired equivalence. Now, observe that, if we prove the equivalence
\beqs
b\sim\sum_{j=0}^{\infty}\sum_{l+k=j}\sum_{|\alpha+\beta|=2l}(-1)^k
\frac{c_{\alpha,\beta}}{\alpha!\beta!}\partial^{\alpha}_{\xi}\partial^{\beta}_x b_k(x,\xi),
\eeqs
the claim of the theorem will follow. Observe that\\
$\ds \sum_{j=0}^{N-1}\sum_{l+k=j}\sum_{|\alpha+\beta|=2l}(-1)^k
\frac{c_{\alpha,\beta}}{\alpha!\beta!}\partial^{\alpha}_{\xi}\partial^{\beta}_x b_k(x,\xi)-b(x,\xi)$
\beq
&=&\sum_{j=1}^{N-1}\sum_{l+k=j}\sum_{|\alpha+\beta|=2l}(-1)^k
\frac{c_{\alpha,\beta}}{\alpha!\beta!}\partial^{\alpha}_{\xi}\partial^{\beta}_x b_k(x,\xi)\\
&=&\sum_{k=1}^{N-1}(-1)^{k-1}\left(\sum_{j=k}^{N-1}\sum_{|\alpha+\beta|=2(j-k+1)}
\frac{c_{\alpha,\beta}}{\alpha!\beta!}\partial^{\alpha}_{\xi}\partial^{\beta}_x b_{k-1}(x,\xi)-b_k(x,\xi)\right)\label{310}.
\eeq
Because of the way we defined $p'_{s,k}$, for $s\geq k\geq 2$, we have
\beqs
p'_{s,k}(x,\xi)&=&\sum_{l=1}^{s-k+1}\sum_{|\alpha+\beta|=2l}\frac{c_{\alpha,\beta}}{\alpha!\beta!}
\partial^{\alpha}_{\xi}\partial^{\beta}_x\sum_{\substack{l_1+...+l_{k-1}=s-l\\ l_1\geq 1,...,l_{k-1}\geq 1}} \sum_{|\alpha^{(1)}+\beta^{(1)}|=2l_1,...,|\alpha^{(k-1)}+\beta^{(k-1)}|=2l_{k-1}}\\
&{}&\hspace{50 pt}\frac{c_{\alpha^{(1)},\beta^{(1)}}\cdot...\cdot c_{\alpha^{(k-1)},\beta^{(k-1)}}}{\alpha^{(1)}!\beta^{(1)}!\cdot...\cdot\alpha^{(k-1)}!\beta^{(k-1)}!}
\partial^{\alpha^{(1)}+...+\alpha^{(k-1)}}_{\xi}\partial^{\beta^{(1)}+...+\beta^{(k-1)}}_x b(x,\xi)\\
&=&\sum_{l=1}^{s-k+1}\sum_{|\alpha+\beta|=2l}\frac{c_{\alpha,\beta}}{\alpha!\beta!}
\partial^{\alpha}_{\xi}\partial^{\beta}_x p'_{s-l,k-1}(x,\xi).
\eeqs
For $k=1$ one easily checks that the same formula holds for $p'_{s,1}$ (by definition, $p'_{s-l,0}=0$ when $s>l$ and $p'_{0,0}=b$). Hence
\beqs
\sum_{s=k}^{N-1}p'_{s,k}(x,\xi)&=&\sum_{l=1}^{N-k}\sum_{|\alpha+\beta|=2l}\frac{c_{\alpha,\beta}}{\alpha!\beta!}
\partial^{\alpha}_{\xi}\partial^{\beta}_x\left(\sum_{s=l+k-1}^{N-1}p'_{s-l,k-1}(x,\xi)\right)\\
&=&\sum_{l=1}^{N-k}\sum_{|\alpha+\beta|=2l}\frac{c_{\alpha,\beta}}{\alpha!\beta!}
\partial^{\alpha}_{\xi}\partial^{\beta}_x\left(\sum_{s=k-1}^{N-l-1}p'_{s,k-1}(x,\xi)\right).
\eeqs
Now, we obtain\\
$\ds \sum_{j=k}^{N-1}\sum_{|\alpha+\beta|=2(j-k+1)}
\frac{c_{\alpha,\beta}}{\alpha!\beta!}\partial^{\alpha}_{\xi}\partial^{\beta}_x b_{k-1}(x,\xi)-b_k(x,\xi)$
\beqs
&=&\sum_{l=1}^{N-k}\sum_{|\alpha+\beta|=2l}
\frac{c_{\alpha,\beta}}{\alpha!\beta!}\partial^{\alpha}_{\xi}\partial^{\beta}_x b_{k-1}(x,\xi)-\sum_{s=k}^{N-1}p'_{s,k}(x,\xi)+\sum_{s=k}^{N-1}p'_{s,k}(x,\xi)-b_k(x,\xi)\\
&=&\sum_{l=1}^{N-k}\sum_{|\alpha+\beta|=2l}
\frac{c_{\alpha,\beta}}{\alpha!\beta!}\partial^{\alpha}_{\xi}\partial^{\beta}_x \left(b_{k-1}(x,\xi)-\sum_{s=k-1}^{N-l-1}p'_{s,k-1}(x,\xi)\right)+\sum_{s=k}^{N-1}p'_{s,k}(x,\xi)-b_k(x,\xi).
\eeqs
By construction, $b_{k-1}\sim\underbrace{0+...+0}_{k-1}+\sum_{s=k-1}^{\infty}p'_{s,k-1}$. Moreover, by the above estimates for the derivatives of $p'_{s,k}$, the above lemma and its prove it follows that there exist $B>0$, $m>0$ and $\tilde{C}_h>0$ in the $(M_p)$ case, resp. there exist $B>0$, $h>0$ and $\tilde{C}_m>0$ in the $\{M_p\}$ case, such that for every $h>0$
\beqs
\frac{\left|D^{\alpha}_{\xi}D^{\beta}_x\left(b_{k}(x,\xi)- \sum_{s<N}p'_{s,k}(x,\xi)\right)\right|\langle (x,\xi)\rangle^{\rho|\alpha|+\rho|\beta|+2N\rho} e^{-M(m|\xi|)}e^{-M(m|x|)}}{h^{|\alpha|+|\beta|+2N}A_{\alpha}A_{\beta}A_NA_N}\leq \tilde{C}_h,
\eeqs
for all $(x,\xi)\in Q^c_{Bm_N}$, $\alpha,\beta\in\NN^d$ and $k,N\in\NN$, $N>k$, in the $(M_p)$ case, resp. the same as above but for some $h$ and every $m$ with $\tilde{C}_m$ in place of $\tilde{C}_h$, in the $\{M_p\}$ case. Now, if we use the estimate (\ref{290}), we get that\\
$\ds \left|\sum_{|\alpha+\beta|=2l}\frac{c_{\alpha,\beta}}{\alpha!\beta!}
\partial^{\alpha+\gamma}_{\xi}\partial^{\beta+\delta}_x \left(b_{k-1}(x,\xi)-\sum_{s=k-1}^{N-l-1}p'_{s,k-1}(x,\xi)\right)\right|$
\beqs
&\leq&\tilde{C}\sum_{|\alpha+\beta|=2l}\frac{|c_{\alpha,\beta}|}{\alpha!\beta!}
\frac{h^{|\gamma|+|\delta|+2N}A_{\alpha+\gamma}A_{\beta+\delta}
A_{N-l}A_{N-l}e^{M(m|\xi|)}e^{M(m|x|)}}{\langle (x,\xi)\rangle^{\rho|\gamma|+\rho|\delta|+2N\rho}}\\
&\leq&c_0^3\tilde{C}c'd^{2l}\sum_{|\alpha+\beta|=2l}\frac{(hH^2)^{|\gamma|+|\delta|+2N}A_{\gamma}A_{\delta}
A_{N}A_{N}e^{M(m|\xi|)}e^{M(m|x|)}}{\langle (x,\xi)\rangle^{\rho|\gamma|+\rho|\delta|+2N\rho}}\\
&\leq&c_0^3c'\tilde{C}2^{2d-1}\frac{(2hdH^2)^{|\gamma|+|\delta|+2N}A_{\gamma}A_{\delta}
A_{N}A_{N}e^{M(m|\xi|)}e^{M(m|x|)}}{\langle (x,\xi)\rangle^{\rho|\gamma|+\rho|\delta|+2N\rho}},
\eeqs
for all $(x,\xi)\in Q^c_{Bm_N}$, $\gamma,\delta\in\NN^d$, $N\geq l+1$ (in the last inequality we used $\ds\sum_{|\alpha+\beta|=2l}1\leq 2^{2l+2d-1}$), where we put $\tilde{C}=\tilde{C}_h$ in the $(M_p)$ case, resp. $\tilde{C}=\tilde{C}_m$ in the $\{M_p\}$ case. Note that the estimates are uniform in $l$ and $k$. One obtains\\
$\ds\left|\partial^{\gamma}_{\xi}\partial^{\delta}_x\left(\sum_{l=1}^{N-k}\sum_{|\alpha+\beta|=2l}
\frac{c_{\alpha,\beta}}{\alpha!\beta!}\partial^{\alpha}_{\xi}\partial^{\beta}_x \left(b_{k-1}(x,\xi)-\sum_{s=k-1}^{N-l-1}p'_{s,k-1}(x,\xi)\right)\right)\right|$
\beqs
\leq c_0^3c'\tilde{C}2^{2d-1}\frac{(4hdH^2)^{|\gamma|+|\delta|+2N}A_{\gamma}A_{\delta}
A_{N}A_{N}e^{M(m|\xi|)}e^{M(m|x|)}}{\langle (x,\xi)\rangle^{\rho|\gamma|+\rho|\delta|+2N\rho}},
\eeqs
for all $(x,\xi)\in Q^c_{Bm_N}$, $\gamma,\delta\in\NN^d$, $N>k$, with uniform estimates in $k$. Similar estimates hold for $\ds\sum_{s=k}^{N-1}p'_{s,k}(x,\xi)-b_k(x,\xi)$ (by the definition of $b_k$). By using the equality (\ref{310}), we obtain the desired result.
\end{proof}

The importance in the study of the Anti-Wick quantization lies in the following results. The proofs are similar to the case of Schwartz distributions and we omit them (see for example \cite{NR}).

\begin{proposition}
Let $a$ be a locally integrable function with *-ultrapolynomial growth (for example, an element of $\Gamma_{A_p,B_p,\rho}^{*,\infty}\left(\RR^{2d}\right)$). If $a(x,\xi)\geq 0$ for almost every $(x,\xi)\in\RR^{2d}$, then $(A_au,u)_{L^2}\geq 0$, $\forall u\in\SSS^*$. Moreover, if $a(x,\xi)>0$ for almost every $(x,\xi)\in\RR^{2d}$, then $(A_au,u)_{L^2}> 0$, $\forall u\in\SSS^*$, $u\neq 0$.
\end{proposition}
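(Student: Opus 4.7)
My plan is to use the sesquilinear representation (\ref{240}) specialized to $v = u$:
\[
(A_au,u)_{L^2} \;=\; \langle A_au, \overline{u}\rangle \;=\; (2\pi)^{-d}\langle a,\,Vu\,\overline{Vu}\rangle \;=\; (2\pi)^{-d}\langle a, |Vu|^2\rangle.
\]
Since $Vu\in\SSS^*(\RR^{2d})$ (by the continuity of the short-time Fourier transform $\SSS^*(\RR^d)\to\SSS^*(\RR^{2d})$ noted earlier), the function $|Vu|^2$ lies in $\SSS^*(\RR^{2d})$ as well. Combined with the *-ultrapolynomial growth of $a$, this lets me represent the pairing on the right as an absolutely convergent Lebesgue integral
\[
(A_au,u)_{L^2} \;=\; (2\pi)^{-d}\int_{\RR^{2d}} a(y,\eta)\,|Vu(y,\eta)|^2\,dy\,d\eta.
\]

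The nonnegativity half of the statement then follows immediately: if $a(y,\eta)\geq 0$ almost everywhere, the integrand above is pointwise a.e.\ nonnegative, hence the integral is $\geq 0$.

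For the strict positivity I would argue as follows. The crucial step is to show that $Vu$ cannot vanish on a set of full measure whenever $u\neq 0$. This comes directly from the isometry $\|Vu\|_{L^2(\RR^{2d})} = (2\pi)^{d/2}\|u\|_{L^2(\RR^d)}$ stated earlier: $u\neq 0$ forces $\|Vu\|_{L^2}>0$, so $Vu$ is not almost everywhere zero. Because $Vu$ is a smooth element of $\SSS^*(\RR^{2d})$, hence continuous, the open set $U=\{(y,\eta):Vu(y,\eta)\neq 0\}$ is nonempty and therefore has strictly positive Lebesgue measure. When in addition $a>0$ almost everywhere, the null set $\{a\leq 0\}$ cannot cover $U$, so $a(y,\eta)\,|Vu(y,\eta)|^2>0$ on a set of positive measure, forcing $(A_au,u)_{L^2}>0$.

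The only technical point that requires care---and the place I would call the main obstacle---is justifying the passage from the formal pairing in (\ref{240}) to the absolutely convergent Lebesgue integral. One has to combine the bound $|a(y,\eta)|\leq C\,e^{M(m|y|)+M(m|\eta|)}$ (from *-ultrapolynomial growth, or directly from membership in $\Gamma_{A_p,B_p,\rho}^{*,\infty}(\RR^{2d})$) with the rapid $\SSS^*$-decay of $|Vu|^2$. In the Beurling case the relevant decay is available for every $m$, while in the Roumieu case one chooses $m$ small enough. Once this integrability is in place, no further subtlety arises and the argument reduces to the well-known one for Schwartz distributions as in \cite{NR}.
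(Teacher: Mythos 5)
Your proof is correct and follows the standard argument (the one the paper points to in Nicola--Rodino): specialize the sesquilinear identity $\langle A_a u,\overline v\rangle=(2\pi)^{-d}\langle a,Vu\,\overline{Vv}\rangle$ to $v=u$, obtain the absolutely convergent integral $(2\pi)^{-d}\int a\,|Vu|^2$ from the pairing of the ultrapolynomial growth of $a$ against the $\SSS^*$-decay of $|Vu|^2$, and then read off nonnegativity (resp.\ strict positivity, via the STFT isometry and the continuity of $Vu$). This is exactly the route the paper has in mind, and your attention to the Beurling/Roumieu parameter split is the right technical check.
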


\noindent Nontrivial symbols $a$ that satisfy the conditions of this proposition, for example, are the ultrapolynomials of the form $\sum_{\alpha}c_{2\alpha}\xi^{2\alpha}$, where $c_{2\alpha}>0$ satisfy the necessary conditions for this to be an ultrapolynomial, i.e. there exist $C>0$ and $\tilde{L}>0$, resp. for every $\tilde{L}>0$ there exists $C>0$, such that $|c_{2\alpha}|\leq C\tilde{L}^{2|\alpha|}/M_{2\alpha}$, for all $\alpha\in\NN^d$.

\begin{proposition}
Let $a\in L^{\infty}\left(\RR^{2d}\right)$. Then $A_a$ extends to a bounded operator on $L^2$, with the following estimate of its norm $\|A_a\|_{\mathcal{L}_b\left(L^2\left(\RR^d\right)\right)}\leq \|a\|_{L^{\infty}\left(\RR^{2d}\right)}$.
\end{proposition}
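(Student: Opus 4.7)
The plan is to read $A_a$ as the composition $(2\pi)^{-d}\, V^*\!\circ M_a \circ V$, where $M_a$ denotes multiplication by $a$, and then chain together three norm estimates that are already available in this section.

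First I would observe that the proposition stated just before the definition of Anti-Wick operators provides that $V:L^2(\RR^d)\to L^2(\RR^{2d})$ is bounded with $\|Vu\|_{L^2}=(2\pi)^{d/2}\|u\|_{L^2}$, so in particular $\|V\|=(2\pi)^{d/2}$. Since $V^*$ is its Hilbert space adjoint (its extension to $L^2(\RR^{2d})\to L^2(\RR^d)$ was also noted in the text), we get $\|V^*\|=(2\pi)^{d/2}$ as well. Multiplication by $a\in L^{\infty}(\RR^{2d})$ is obviously bounded on $L^2(\RR^{2d})$ with operator norm $\|a\|_{L^{\infty}}$.

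For $u\in\SSS^*(\RR^d)$ one then has $Vu\in L^2(\RR^{2d})$, so $aVu\in L^2(\RR^{2d})$, and the formula $A_a u=(2\pi)^{-d}V^*(aVu)$ from the definition gives
\begin{equation*}
\|A_a u\|_{L^2}\leq (2\pi)^{-d}\|V^*\|\,\|a\|_{L^{\infty}}\,\|V\|\,\|u\|_{L^2}=\|a\|_{L^{\infty}}\|u\|_{L^2}.
\end{equation*}
Because $\SSS^*(\RR^d)$ is dense in $L^2(\RR^d)$, this estimate gives the existence of a unique bounded extension of $A_a$ to $L^2(\RR^d)$ with $\|A_a\|_{\mathcal{L}_b(L^2)}\leq \|a\|_{L^{\infty}}$. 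Equivalently, one can work directly with the sesquilinear form (\ref{240}): for $u,v\in\SSS^*$ one has
\begin{equation*}
|\langle A_au,\overline{v}\rangle|=(2\pi)^{-d}\left|\int_{\RR^{2d}}a(y,\eta)\,Vu(y,\eta)\,\overline{Vv(y,\eta)}\,dy\,d\eta\right|
\leq (2\pi)^{-d}\|a\|_{L^{\infty}}\|Vu\|_{L^2}\|Vv\|_{L^2},
\end{equation*}
which by Plancherel for $V$ equals $\|a\|_{L^{\infty}}\|u\|_{L^2}\|v\|_{L^2}$, and a density/duality argument again yields the boundedness of the extension.

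There is really no obstacle here, since the proof reduces to chaining three standard norm inequalities; the only thing to be careful about is the interpretation of $A_a$ on $L^2$ (it is a priori only defined on $\SSS^*$), which is handled by the density of $\SSS^*$ in $L^2$ and the fact that both $V$ and $V^*$ were already shown to extend continuously to the relevant $L^2$ spaces.
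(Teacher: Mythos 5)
Your proof is correct and is exactly the standard argument (the paper omits the proof, pointing to \cite{NR}, where it is done this way): factor $A_a=(2\pi)^{-d}V^*M_aV$, use the isometry property $\|Vu\|_{L^2}=(2\pi)^{d/2}\|u\|_{L^2}$ together with $\|V^*\|=\|V\|$ and $\|M_a\|=\|a\|_{L^\infty}$, and conclude by density of $\SSS^*$ in $L^2$. The alternative sesquilinear-form argument you sketch from (\ref{240}) is an equally valid packaging of the same estimate.
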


\section{Convolution with the gaussian kernel}

The existence of the convolution of two ultradistributions was studied in \cite{PilipovicC} and \cite{PK} in the Beurling case and in \cite{PB} in the Roumieu case. The convolution of two ultradistributions $S,T\in\DD'^*$ exists if for every $\varphi\in\DD^*$, $(S\otimes T)\varphi^{\Delta}\in\DD_{L^1}'^{(M_p)}\left(\RR^{2d}\right)$, resp. $(S\otimes T)\varphi^{\Delta}\in \tilde{\DD}_{L^1}'^{\{M_p\}}\left(\RR^{2d}\right)$, where $\varphi^{\Delta}(x,y)=\varphi(x+y)$. In that case $S* T$ is defined by $\langle S* T,\varphi\rangle=\langle (S\otimes T)\varphi^{\Delta}, 1\rangle$. We will briefly comment on the meaning of $\langle (S\otimes T)\varphi^{\Delta},1\rangle$ (for the complete theory of the existence of convolution as well as other equivalent definitions, we refer to \cite{PilipovicC} and \cite{PK} for the Beurling case and \cite{PB} for the Roumieu case). In \cite{PilipovicC}, for the Beurling case and \cite{PB} for the Roumieu case, alternative Hausdorff locally convex topology is introduced on $\DD_{L^{\infty}}^{(M_p)}$, resp $\tilde{\DD}_{L^{\infty}}^{\{M_p\}}$, which is weaker than the original topology, stronger than the induced one from $\EE^*$ and $\DD^*$ is continuously and densely injected in it. Moreover, the duals of these spaces with these topologies coincide with $\DD_{L^1}'^{(M_p)}$, resp. with $\tilde{\DD}_{L^1}'^{\{M_p\}}$ as sets. The meaning of $\langle (S\otimes T)\varphi^{\Delta},1\rangle$ is in the sense of these dualities. If $\psi\in\DD^*$ is such that $0\leq \psi \leq1$, $\psi(x)=1$ when $|x|\leq 1$ and $\psi(x)=0$ when $|x|>2$, then $\psi_j\longrightarrow 1$, when $j\longrightarrow \infty$, in the alternative topology of $\DD_{L^{\infty}}^{(M_p)}$, resp. $\tilde{\DD}_{L^{\infty}}^{\{M_p\}}$. So, if $G\in\DD_{L^1}'^{(M_p)}$, resp. $G\in\tilde{\DD}_{L^1}'^{\{M_p\}}$, $\langle G,\psi_j\rangle\longrightarrow \langle G,1\rangle$, when $j\longrightarrow\infty$.\\
\indent Our goal in this section is to find the largest subspace of $\DD'^*$ such that the convolution of each element of that subspace with $e^{s|\cdot|^2}$ exists, where $s\in\RR$, $s\neq0$ is fixed. The general idea is similar to that in \cite{Wagner}, where the case of Schwartz distributions is considered. We will need the following results, concerning the Laplace transform, from \cite{BojanL}.

For a set $B\subseteq\RR^d$ denote by $\mathrm{ch\,}B$ the convex hull of $B$.
\begin{theorem}\label{t1}
Let $B$ be a connected open set in $\RR^d_{\xi}$ and $T\in\DD'^{*}(\RR^d_x)$ be such that, for all $\xi\in B$, $e^{-x\xi}T(x)\in\SSS'^{*}(\RR^d_x)$. Then the Fourier transform $\mathcal{F}_{x\rightarrow\eta}\left(e^{-x\xi}T(x)\right)$ is an analytic function of $\zeta=\xi+i\eta$ for $\xi\in \mathrm{ch\,}B$, $\eta\in\RR^d$. Furthermore, it satisfies the following estimates: for every $K\subset\subset\mathrm{ch\,}B$ there exist $k>0$ and $C>0$, resp. for every $k>0$ there exists $C>0$, such that
\beq\label{300}
|\mathcal{F}_{x\rightarrow\eta}(e^{-x\xi}T(x))(\xi+i\eta)|\leq Ce^{M(k|\eta|)},\, \forall \xi\in K, \forall\eta\in\RR^d.
\eeq
\end{theorem}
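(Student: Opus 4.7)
The goal is to realize $F(\zeta) := \mathcal{F}_{x\to\eta}(e^{-x\xi}T(x))$ (a priori only a tempered ultradistribution in $\eta$ for each fixed $\xi \in B$) as a genuine holomorphic function of $\zeta = \xi + i\eta$ on the tube $\mathrm{ch\,}B + i\RR^{d}$ satisfying the pointwise growth \eqref{300}. I would proceed in three stages: realize $F$ as a holomorphic function on the initial tube $B + i\RR^{d}$; extend to $\mathrm{ch\,}B + i\RR^{d}$; and obtain the uniform estimates on compacts.

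For the first stage I would fix $\xi_{0} \in B$, put $S := e^{-x\xi_{0}}T \in \SSS'^{*}$, and observe that the hypothesis says $e^{-x\tau}S \in \SSS'^{*}$ for every $\tau$ in the open set $B - \xi_{0}$. The key Paley--Wiener-type lemma to establish is the following: if $U \in \SSS'^{*}$ and $e^{-x\tau}U \in \SSS'^{*}$ for $\tau$ in a neighbourhood of $0$, then the distributional Fourier transform $\widehat{U}$ is represented by a function which extends holomorphically in the imaginary direction, with a bound of the form $|\widehat{U}(\eta+i\sigma)| \leq Ce^{M(k|\eta|)}$ carrying the appropriate Beurling/Roumieu quantifier on $k$. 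I would prove this by invoking the structure theorem for $\SSS'^{*}$ to write $U = P(D)g$ with $g$ a continuous function of $M$-exponential growth and $P$ an ultradifferential operator of class $*$, and then computing
\begin{eqnarray*}
\mathcal{F}(e^{-x\tau}U)(\eta) = P(\eta + i\tau)\,\mathcal{F}(e^{-x\tau}g)(\eta),
\end{eqnarray*}
whose second factor is an absolutely convergent integral as soon as $\tau$ lies in the open set where $e^{-x\tau}$ beats the growth of $g$, while the growth of $P(\eta + i\tau)$ against the complex argument is controlled by $e^{M(k|\eta|)}$ directly from the defining coefficient estimates of ultrapolynomials of class $*$. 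Analyticity in $\zeta$ amounts to the Cauchy--Riemann equations, which I would check by differentiating under the integral and matching $\partial_{\tau_{j}}$ with $-i\partial_{\eta_{j}}$.

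For the second stage, once $F$ is holomorphic on the tube $B + i\RR^{d}$, the analytic continuation to $\mathrm{ch\,}B + i\RR^{d}$ follows from Bochner's tube theorem (a function holomorphic on a tube with connected base extends holomorphically to the tube over its convex hull), and uniqueness of analytic continuation guarantees agreement with the pointwise definition coming from any other choice of base point $\xi_{0}$. For the third stage, compactness of $K \subset\subset \mathrm{ch\,}B$ allows one to cover $K$ by finitely many simplices with vertices in $B$; applying the first stage at the vertices gives the boundary bound $|F(\xi^{(j)} + i\eta)| \leq C e^{M(k|\eta|)}$, and a Phragm\'en--Lindel\"of / convexity argument inside each tube $\mathrm{conv}\{\xi^{(j)}\} + i\RR^{d}$ propagates the bound uniformly over $K$, with the same $k$ up to harmless constants.

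The main obstacle is the first stage: upgrading the distributional $\widehat{e^{-x\xi}T}$ to an honest holomorphic function with the quantitative $e^{M(k|\eta|)}$ bound. This upgrade depends essentially on the \emph{two-sided} exponential temperedness hypothesis---that $\xi$ ranges over an open set rather than a single point---so that one has room to ``trade'' some decay from nearby $\xi'$ against the growth needed to allow analytic extension in the imaginary direction. Carefully tracking the Beurling/Roumieu quantifier distinction (``some $k$'' vs.\ ``every $k$'') through the structure theorem is the secondary technical burden.
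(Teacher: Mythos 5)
The paper itself does not prove this theorem; it imports it from \cite{BojanL} and adds only a remark giving the formula
$\mathcal{L}(S)(\zeta)=\left\langle e^{\varepsilon\sqrt{1+|x|^2}}e^{-x\zeta}S(x),e^{-\varepsilon\sqrt{1+|x|^2}}\right\rangle$,
so there is no written proof to compare against. Your stages 2 and 3 (Bochner's tube theorem and a Phragm\'en--Lindel\"of propagation of the bound over finitely many simplices) are reasonable and essentially unproblematic. The genuine gap is in stage 1, precisely in the assertion that $\mathcal{F}(e^{-x\tau}g)$ is an absolutely convergent integral ``as soon as $\tau$ lies in the open set where $e^{-x\tau}$ beats the growth of $g$.'' The structure theorem for $\SSS'^{*}$, applied to $U$ alone, produces a $g$ with $M$-exponential \emph{growth}: $|g(x)|\leq Ce^{M(m|x|)}$, which is subexponential and in particular not decaying. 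For any $\tau\neq 0$ the factor $e^{-x\tau}$ grows exponentially on the half-space $\{x\tau<0\}$, so $e^{-x\tau}g$ is never integrable; the ``open set where $e^{-x\tau}$ beats the growth of $g$'' is empty. The example $T=\delta_0$ (for which the hypotheses hold trivially with $B=\RR^d$) already breaks the argument: any structure-theorem representative $g$ of $\delta_0$ has polynomial or worse growth, and $\int e^{-ix\eta}e^{-x\tau}g(x)\,dx$ diverges for every $\tau\neq 0$.

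You correctly identify in your last paragraph that the two-sided hypothesis must be exploited, but the sketch of stage 1 never actually uses it: you apply the structure theorem to $U$ at a single base point and then hope for decay in $g$ that the theorem cannot deliver. The fix is to bring the open-set hypothesis in \emph{before} the structure theorem. Since $e^{\pm\delta x_j}U\in\SSS'^{*}$ for small $\delta$, the products $\cosh(\delta x_j)U$ (hence $\prod_j\cosh(\delta x_j)\,U$) are tempered; one then applies the structure theorem to $\prod_j\cosh(\delta x_j)\,U$, obtaining a representative of $M$-growth, and the factor $\prod_j\cosh(\delta x_j)^{-1}$ decays like $e^{-\delta\sum_j|x_j|}$ in \emph{all} directions, enough to dominate $e^{-x\tau}$ for $|\tau|$ small and make the Fourier integral absolutely convergent. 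Equivalently --- and this is what the paper's Remark is telling you --- one splits $e^{-x\zeta}$ so that a mild exponential $e^{\varepsilon\sqrt{1+|x|^2}}$ is absorbed into the ultradistribution (this is where the open-set hypothesis is spent) while the compensating $e^{-\varepsilon\sqrt{1+|x|^2}}$, being in $\SSS^{*}$, serves as the test function. This machinery of controlling $\cosh$-multipliers and exponentials of $\sqrt{1+|x|^2}$ is developed at length in Section 4 of the present paper (Lemmas \ref{27} and \ref{37} and the proof of Theorem \ref{ter}) and is not a small technicality one can elide.
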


\begin{remark}
If, for $S\in\DD'^{*}$, the conditions of the theorem are fulfilled, we call $\mathcal{F}_{x\rightarrow\eta}\left(e^{-x\xi}S(x)\right)$ the Laplace transform of $S$ and denote it by $\mathcal{L}(S)$. Moreover,
\beq\label{25}
\mathcal{L}(S)(\zeta)=\left\langle e^{\varepsilon\sqrt{1+|x|^2}}e^{-x\zeta}S(x),e^{-\varepsilon\sqrt{1+|x|^2}}\right\rangle,
\eeq
for $\zeta\in U+i\RR^d_{\eta}$, where $\overline{U}\subset\subset \mathrm{ch\,}B$ and $\varepsilon$ depends on $U$.\\
\indent If for $S\in\DD'^{*}$ the conditions of the theorem are fulfilled for $B=\RR^d$, then the choice of $\varepsilon$ can be made uniform for all $K\subset\subset\RR^d$.
\end{remark}

\begin{theorem}\label{t2}
Let $B$ be a connected open set in $\RR^d_{\xi}$ and $f$ an analytic function on $B+i\RR^d_{\eta}$. Let $f$ satisfies the condition: for every compact subset $K$ of $B$ there exist $C>0$ and $k>0$, resp. for every $k>0$ there exists $C>0$, such that
\beq\label{n1}
|f(\xi+i\eta)|\leq C e^{M(k|\eta|)},\, \forall\xi\in K, \forall\eta\in\RR^d.
\eeq
Then, there exists $S\in\DD'^{*}(\RR^d_x)$ such that $e^{-x\xi}S(x)\in\SSS'^{*}(\RR^d_x)$, for all $\xi\in B$ and
\beq\label{n4}
\mathcal{L}(S)(\xi+i\eta)=\mathcal{F}_{x\rightarrow\eta}\left(e^{-x\xi}S(x)\right)(\xi+i\eta)=f(\xi+i\eta),\,\, \xi\in B,\, \eta\in\RR^d.
\eeq
\end{theorem}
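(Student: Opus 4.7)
The plan is, for each fixed $\xi \in B$, to invert the Fourier transform of $\eta \mapsto f(\xi+i\eta)$ to obtain a tempered ultradistribution $T_\xi$, and then to glue the family $\{T_\xi\}_{\xi \in B}$ into a single ultradistribution $S\in\DD'^*(\RR^d_x)$ by setting $S=e^{x\xi}T_\xi$, showing that the right-hand side is in fact independent of $\xi$. The identity $\mathcal{L}(S)=f$ is then immediate from the definition of $\mathcal{L}$.

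First, the growth hypothesis on $f$ combined with the decay of elements of $\SSS^*$ implies that $G_\xi(\eta):=f(\xi+i\eta)$ defines a tempered ultradistribution on $\RR^d_\eta$ for every $\xi\in B$: in the Beurling case the single bound $|f(\xi+i\eta)|\leq Ce^{M(k|\eta|)}$ places $G_\xi\in\SSS'^{(M_p)}$, whereas in the Roumieu case the availability of such a bound for every $k>0$ places $G_\xi\in\SSS'^{\{M_p\}}$. I then set $T_\xi:=\mathcal{F}^{-1}(G_\xi)\in\SSS'^*(\RR^d_x)$.

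Next, I exploit the holomorphy of $f$. For each $\varphi\in\SSS^*$ the scalar function $\xi\mapsto\langle G_\xi,\varphi\rangle=\int f(\xi+i\eta)\varphi(\eta)\,d\eta$ is holomorphic on $B$ (differentiation under the integral is justified by the uniform growth bound on compact subsets of $B$), and the Cauchy--Riemann relations for $f$ yield $\partial_{\xi_j}G_\xi=-i\partial_{\eta_j}G_\xi$ as a smooth $\SSS'^*$-valued map of $\xi$. Applying the inverse Fourier transform to both sides produces the key identity
\[
\partial_{\xi_j}T_\xi=-x_j T_\xi\quad\text{in }\SSS'^*,\qquad \xi\in B,\ j=1,\dots,d.
\]
Although $e^{x\xi}$ is not a multiplier on $\SSS^*$, it is smooth in $x$, so $S_\xi:=e^{x\xi}T_\xi$ still makes sense in $\DD'^*$. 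Leibniz together with the identity above gives $\partial_{\xi_j}S_\xi=x_j e^{x\xi}T_\xi-x_j e^{x\xi}T_\xi=0$ in $\DD'^*$. By connectedness of $B$, the map $\xi\mapsto S_\xi$ is constant; call its value $S$. Then $e^{-x\xi}S=T_\xi\in\SSS'^*$ for every $\xi\in B$, and $\mathcal{F}_{x\to\eta}(e^{-x\xi}S)(\eta)=G_\xi(\eta)=f(\xi+i\eta)$, which is exactly the required identity.

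The main technical obstacle is making the differentiation step rigorous: one has to verify that $\xi\mapsto T_\xi$ is genuinely smooth as an $\SSS'^*$-valued map and that the scalar Cauchy--Riemann relation upgrades to an identity in $\SSS'^*$ after inverse Fourier transformation. The Roumieu case is the more delicate of the two, since the growth bound is not uniform in $k$, so the correct notion of ``smooth $\SSS'^{\{M_p\}}$-valued family'' has to be extracted using the projective-limit description of $\SSS^{\{M_p\}}$ over sequences $(r_p),(s_q)\in\mathfrak{R}$ recalled in Section~1, together with the growth estimates provided by the hypothesis. Once these equicontinuity estimates are in hand, the ``constancy in $\xi$'' argument is purely formal.
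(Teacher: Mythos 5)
Your proposal is sound, but note that this paper does not contain its own proof of the statement: Theorems~\ref{t1} and~\ref{t2} are both quoted verbatim from \cite{BojanL} (``We will need the following results, concerning the Laplace transform, from \cite{BojanL}.''), so there is no in-text argument to compare against. I will therefore assess your proof on its own merits.

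Your strategy is the standard one for inverse Paley--Wiener/Laplace theorems: define $G_\xi(\eta)=f(\xi+i\eta)$, check $G_\xi\in\SSS'^*(\RR^d_\eta)$ from the growth bound \eqref{n1}, set $T_\xi=\mathcal{F}^{-1}G_\xi$, derive $\partial_{\xi_j}T_\xi=-x_jT_\xi$ from the Cauchy--Riemann equations, and observe that $S_\xi:=e^{x\xi}T_\xi$ has all $\xi$-derivatives zero, hence is constant on the connected set $B$. All signs and Fourier conventions are consistent with the paper's normalization $\mathcal{F}f(\xi)=\int e^{-ix\xi}f(x)\,dx$: indeed $\partial_{\xi_j}f=-i\partial_{\eta_j}f$, so $\partial_{\xi_j}T_\xi=(2\pi)^{-d}\int e^{ix\eta}(-i)\partial_{\eta_j}f\,d\eta=-x_jT_\xi$ after integration by parts, and the Leibniz computation $\partial_{\xi_j}(e^{x\xi}T_\xi)=x_je^{x\xi}T_\xi-e^{x\xi}x_jT_\xi=0$ goes through. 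Your identification of the technical burden (upgrading the scalar Cauchy--Riemann relation to an $\SSS'^*$-valued one, Roumieu more delicate than Beurling) is accurate. One way to discharge it cleanly and avoid any debate about the ``correct notion of smooth $\SSS'^{\{M_p\}}$-valued family'' is to test against $\chi\in\DD^*$ directly: writing $\langle S_\xi,\chi\rangle=\langle T_\xi,e^{x\xi}\chi\rangle=\int f(\xi+i\eta)\Psi(\xi+i\eta)\,d\eta$ with $\Psi(\zeta)=(2\pi)^{-d}\int e^{x\zeta}\chi(x)\,dx$ entire, one sees that the integrand is holomorphic in $\zeta$, so $\partial_{\xi_j}$ of the integral equals $-i\int\partial_{\eta_j}(f\Psi)\,d\eta=0$ once the decay of $\Psi$ in $\eta$ (Paley--Wiener for $\DD^*$, which dominates $e^{M(k|\eta|)}$ in both cases) justifies vanishing of boundary terms. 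This sidesteps the differentiability-in-$\SSS'^*$ issue entirely and turns your ``purely formal'' step into a genuinely scalar computation. Your first step is also correct: for $G_\xi\in\SSS'^{(M_p)}$ one bounds $|\langle G_\xi,\varphi\rangle|\leq C\|\varphi\|_m\int e^{M(k|\eta|)-M(m|\eta|)}\,d\eta$, which converges for $m>kH$ via $(M.2)$; for $G_\xi\in\SSS'^{\{M_p\}}$ one uses that continuity on a $(DFS)$-space reduces to continuity on each Banach step $\SSS^{M_p,m}_\infty$, and for fixed $m$ the hypothesis provides a bound with $k$ as small as one likes.
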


Put $B^{*}=\{S\in\DD'^{*}|\cosh(k|x|)S\in\SSS'^{*},\, \forall k\geq 0\}$ and for $s\in\RR\backslash\{0\}$, put $B^{*}_s=e^{-s|x|^2}B^{*}$. Obviously $B^{*}\subseteq\SSS'^{*}$ and $B^{*}_s\subseteq\DD'^{*}$. Define
\beqs
A^{*}=\left\{f\in\mathcal{O}\left(\CC^d\right)|\forall K\subset\subset\RR^d_{\xi},\, \exists h,C>0, \mbox{ resp. }\forall h>0,\, \exists C>0,\,\mbox{such that }\right.\\
\left.|f(\xi+i\eta)|\leq C e^{M(h|\eta|)},\, \forall \xi\in K, \forall \eta\in\RR^d\right\},
\eeqs
$A^{*}_{\small\mbox{real}}=\{f_{|\RR^d}|f\in A^{*}\}$ and $A^{*}_s=e^{s|x|^2}A^{*}_{\small\mbox{real}}$. Assume that $k>0$. First we will prove that $\cosh(k|x|)\in \mathcal{C}^{\infty}(\RR^d)$. For $\rho\geq 0$, we have
\beqs
\cosh(k\rho)=\frac{1}{2}\left(\sum_{n=0}^{\infty}\frac{k^n\rho^n}{n!}+
\sum_{n=0}^{\infty}\frac{(-1)^n k^n\rho^n}{n!}\right)=\sum_{n=0}^{\infty}\frac{k^{2n}\rho^{2n}}{(2n)!},
\eeqs
hence $\ds\cosh(k|x|)=\sum_{n=0}^{\infty}\frac{k^{2n}|x|^{2n}}{(2n)!}$ and the function $\ds\sum_{n=0}^{\infty}\frac{k^{2n}|x|^{2n}}{(2n)!}$ is obviously in $\mathcal{C}^{\infty}\left(\RR^d\right)$. We will give another two equivalent definitions of $B^{*}$. We need the following lemmas.

\begin{lemma}\label{27}
Let $k>0$. The function $\ds\frac{\cosh(k|x|)}{\cosh(2k|x|)}$ is an element of $\SSS^*$.
\end{lemma}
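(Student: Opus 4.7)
The plan is to realize $\Phi(x):=\cosh(k|x|)/\cosh(2k|x|)$ as the restriction of a function holomorphic on a complex tube around $\RR^d$, derive an exponential decay bound there, and then deduce the $\SSS^*$ estimates via Cauchy's integral formula together with the structural properties of $M_p$. Since $\cosh(k|x|)=\sum_{n\ge 0} k^{2n}(x_1^2+\cdots+x_d^2)^n/(2n)!$ extends to an entire function of $z\in\CC^d$ (and likewise $\cosh(2k|x|)$), $\Phi$ extends meromorphically with poles exactly on the variety $\{z:z^2=-(2n+1)^2\pi^2/(16k^2)\}$, $n\in\NN$. In the tube $T_c:=\{z\in\CC^d:|\mathrm{Im}\,z|<c\}$ with $c<\pi/(4k)$, one has $\mathrm{Re}(z^2)=|\mathrm{Re}\,z|^2-|\mathrm{Im}\,z|^2>-c^2>-\pi^2/(16k^2)$, so all these poles are avoided and $\Phi$ is holomorphic on $T_c$.

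Next, on a slightly smaller tube $T_{c'}$ I would establish $|\Phi(z)|\le Ce^{-k|\mathrm{Re}\,z|/2}$. Writing $\sqrt{z^2}=s+it$ with $s\ge 0$, one checks that for $z=\xi+i\eta\in T_{c'}$, $s$ is comparable to $|\xi|$ for $|\xi|$ large while $|t|\le c'$ remains bounded; the identity $|\cosh(a+ib)|^2=\sinh^2 a+\cos^2 b$ then gives $|\cosh(k\sqrt{z^2})|\le Ce^{k|\xi|}$ and $|\cosh(2k\sqrt{z^2})|\ge (1/4)e^{2k|\xi|}$ for $|\xi|$ large, with bounded $|\xi|$ handled by continuity and the absence of poles. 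Cauchy's inequality on polydiscs of fixed radius $r<c'/\sqrt{d}$ centered at real points $\xi$ (which lie inside $T_{c'}$) then produces the Gevrey-$1$ type bound
\[
|D^\alpha \Phi(\xi)|\le C\,\alpha!\,r^{-|\alpha|}e^{-k|\xi|/2},\qquad \xi\in\RR^d,\ \alpha\in\NN^d.
\]

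To pass to $\SSS^*$ membership I would insert this in the seminorm from the paper:
\[
\|\Phi\|_m\le C\cdot\sup_{\xi\in\RR^d}e^{M(m|\xi|)-k|\xi|/2}\cdot\sup_{\alpha\in\NN^d}\frac{(m/r)^{|\alpha|}\alpha!}{M_\alpha}.
\]
Condition $(M.3)$ implies $M(\rho)/\rho\to0$ (via $\int_0^\infty M(t)/t^2\,dt<\infty$), which makes the first supremum finite for every $m>0$. In the Roumieu case $\{M_p\}$ it suffices to choose $m$ small enough that $m/r\le 1$, whereupon $\alpha!\le M_\alpha$ bounds the second supremum. The genuine obstacle is the Beurling case $(M_p)$, where the second supremum must be finite for \emph{every} $m$; the key point is that $(M.3)$ forces $\sum 1/m_p<\infty$ and hence $m_p/p\to\infty$, so taking the geometric mean gives $(M_p/p!)^{1/p}\to\infty$. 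This yields $\alpha!/M_\alpha\le C_L L^{-|\alpha|}$ for every $L>0$, and choosing $L>m/r$ absorbs the factor $(m/r)^{|\alpha|}$, completing the argument.
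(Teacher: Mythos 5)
Your argument is correct and reaches the same Gevrey-1 decay estimate $|D^\alpha\Phi(\xi)|\le C\,\alpha!\,r^{-|\alpha|}e^{-\kappa|\xi|}$ that the paper derives, but by a noticeably cleaner route. The paper works in the cone $W=\{z=x+iy:|x|>2|y|\}$, writes $g_k(z)=(e^{k\sqrt{z^2}}+e^{-k\sqrt{z^2}})/2$ there, and establishes a lower bound on the denominator $g_{2k}$ by explicitly estimating $\sqrt{\rho}\cos(\theta/2)$ in polar coordinates for $z^2$; near the origin it patches in a separate Cauchy estimate on a small ball where $g_{2k}\neq0$ by continuity. You instead observe that the zeros of $g_{2k}$ lie on $\{z^2=-(2n+1)^2\pi^2/(16k^2)\}$, a union of negative-real level sets of $z^2$, and that in the tube $T_c$ with $c<\pi/(4k)$ one has $\mathrm{Re}(z^2)=|\xi|^2-|\eta|^2>-c^2>-\pi^2/(16k^2)$, so no patching near $0$ is needed; the identity $|\cosh(a+ib)|^2=\sinh^2a+\cos^2b$ then gives the size estimate in one stroke. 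Your version also spells out the passage from the Gevrey-1 decay bound to membership in $\SSS^*$ — in particular, why $(M.3)'$ forces $m_p/p\to\infty$, hence $(M_p/p!)^{1/p}\to\infty$, so that $\alpha!/M_\alpha\le C_LL^{-|\alpha|}$ for every $L$, covering both the Beurling and Roumieu seminorms — a step the paper dispatches with ``from this it easily follows.'' The two proofs buy essentially the same thing, but yours replaces a somewhat ad hoc cone-plus-ball decomposition with a single tube and makes the final absorption of the factorial explicit; the paper's cone has the mild advantage that $\mathrm{Re}(z^2)>0$ automatically there, avoiding even the brief pole discussion.

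One small caveat: in the Cauchy step you should use polydiscs of (coordinatewise) radius $r<c'/\sqrt{d}$ so that the imaginary parts stay inside $T_{c'}$, and account for the shift $|\mathrm{Re}\,w|\ge|\xi|-r\sqrt d$ by absorbing $e^{kr\sqrt d/2}$ into the constant — you flag the radius constraint, so this is just a reminder to carry the shift through the estimate.
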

\begin{proof} Consider the function $\ds g_k(z)=\sum_{n=0}^{\infty}\frac{k^{2n}(z^2)^n}{(2n)!}$. Obviously $g_k(z)$ is an entire function. Put $W=\{z=x+iy\in\CC^d||x|>2|y|\}$ and consider the set $W_r=W\backslash\overline{B(0,r)}$, where $B(0,r)$ is the ball in $\CC^d$ with center at $0$ and radius $r>0$. Then $\ds\frac{e^{k\sqrt{z^2}}+e^{-k\sqrt{z^2}}}{2}$ is analytic and single valued function on $W_r$, where we take the principal branch of the square root which is analytic on $\CC\backslash (-\infty,0]$. Also, for $z\in W_r$, put $\rho=\sqrt{\left(|x|^2-|y|^2\right)^2+4(xy)^2}$, $\ds\cos\theta= \frac{|x|^2-|y|^2}{\sqrt{\left(|x|^2-|y|^2\right)^2+4(xy)^2}}$ and $\ds\sin\theta=\frac{2xy}{\sqrt{\left(|x|^2-|y|^2\right)^2+4(xy)^2}}$, where $\theta\in (-\pi,\pi)$, from what it follows $\theta\in(-\pi/2,\pi/2)$ (because $\cos\theta>0$ and $\theta\in (-\pi,\pi)$). We will need sharper estimate for $\cos\theta$.
\beqs
\cos\theta&=&\frac{|x|^2-|y|^2}{\sqrt{\left(|x|^2-|y|^2\right)^2+4(xy)^2}}=
\left(1+\left(\frac{2|xy|}{|x|^2-|y|^2}\right)^2\right)^{-1/2}\\
&\geq&\left(1+\left(\frac{|x|^2+|y|^2}{|x|^2-|y|^2}\right)^2\right)^{-1/2}
\geq\left(1+\left(\frac{\frac{5}{4}|x|^2}{\frac{3}{4}|x|^2}\right)^2\right)^{-1/2}=
\frac{3}{\sqrt{34}}.
\eeqs
Then
\beqs
\left|e^{k\sqrt{z^2}}+e^{-k\sqrt{z^2}}\right|&\geq& \left|e^{k\sqrt{z^2}}\right|-\left|e^{-k\sqrt{z^2}}\right|=e^{k\mathrm{Re\,}\sqrt{\rho(\cos\theta+i\sin\theta)}}-
e^{-k\mathrm{Re\,}\sqrt{\rho(\cos\theta+i\sin\theta)}}\\
&=&e^{k\mathrm{Re\,}\sqrt{\rho}\left(\cos\frac{\theta}{2}+i\sin\frac{\theta}{2}\right)}-
e^{-k\mathrm{Re\,}\sqrt{\rho}\left(\cos\frac{\theta}{2}+i\sin\frac{\theta}{2}\right)}\geq
e^{k\sqrt{\rho}\cos\frac{\theta}{2}}-1
\eeqs
where the second equality follows from the fact that we take the principal branch of the square root. Now, using the above estimate for $\cos\theta$, we have
\beqs
\sqrt{\rho}\cos\frac{\theta}{2}=\sqrt{\rho}\sqrt{\frac{\cos\theta+1}{2}}\geq
\sqrt{\rho}\sqrt{\frac{3+\sqrt{34}}{2\sqrt{34}}}.
\eeqs
So, if we put $\ds c_1=\sqrt{\frac{3+\sqrt{34}}{2\sqrt{34}}}$, we obtain
\beq\label{30}
\left|e^{k\sqrt{z^2}}+e^{-k\sqrt{z^2}}\right|\geq e^{c_1 k\sqrt[4]{\left(|x|^2-|y|^2\right)^2+4(xy)^2}}-1\geq
e^{c_1 k\sqrt{|x|^2-|y|^2}}-1>0.
\eeq
Hence $e^{k\sqrt{z^2}}+e^{-k\sqrt{z^2}}$ doesn't have zeroes in $W_r$. Now, $\ds f(z)=\frac{e^{k\sqrt{z^2}}+e^{-k\sqrt{z^2}}}{e^{2k\sqrt{z^2}}+e^{-2k\sqrt{z^2}}}$ is an analytic function on $W_r$. Moreover, because $\left(e^{k\sqrt{z^2}}+e^{-k\sqrt{z^2}}\right)/2=g_k(z)$, for $z\in W_r\cap\RR^d_x$ and from the uniqueness of analytic continuation, it follows $\left(e^{k\sqrt{z^2}}+e^{-k\sqrt{z^2}}\right)/2=g_k(z)$ on $W_r$. Hence $f(z)=g_k(z)/g_{2k}(z)$ on $W_r$ and this holds for all $r>0$, hence on $W$. Note that $g_{2k}(0)=1$, so, there exists $r_0>0$ such that $|g_{2k}(z)|>0$ on $B(0,2r_0)$ and hence $g_k(z)/g_{2k}(z)$ is analytic function on $W\cup B(0,2r_0)$. Let $C_{r_0}>0$ be a constant such that $\left|g_k(z)/g_{2k}(z)\right|\leq C_{r_0}$ on $\overline{B(0,r_0)}$. Take $r_1>0$ such that $\overline{B(x,2d r_1)}\subseteq\left(\CC^d\backslash\overline{B(0,r_0/16)}\right)\cap W$, for all $x\in W_{\frac{r_0}{4}}\cap\RR^d_x$. Then, for such $x$, from Cauchy integral formula, we have
\beqs
\left|\partial_z^{\alpha}f(x)\right|\leq \frac{\alpha !}{r_1^{|\alpha|}}\sup_{|w_1-x_1|\leq r_1,...,|w_d-x_d|\leq r_1}|f(w)|.
\eeqs
Now, for $w=u+iv\in\CC^d$ such that $|w_j-x_j|\leq r_1$, for all $j=1,...,d$, using the estimate (\ref{30}) but with $2k$ instead of $k$ and the fact $\mathrm{Re\,}\sqrt{z^2}>0$, for $z\in W$, which we proved above, we get
\beqs
|f(w)|&=&\left|\frac{e^{k\sqrt{w^2}}+e^{-k\sqrt{w^2}}}{e^{2k\sqrt{w^2}}+e^{-2k\sqrt{w^2}}}\right|\leq \frac{e^{k\sqrt[4]{\left(|u|^2-|v|^2\right)^2+4(uv)^2}}+1}
{e^{2c_1 k\sqrt{|u|^2-|v|^2}}-1}\\
&\leq&\frac{2e^{k\sqrt{|u|^2-|v|^2+2|uv|}}}
{e^{2c_1 k\sqrt{|u|^2-|v|^2}}-1}\leq \frac{2e^{\sqrt{2}k|u|}}
{e^{\sqrt{3}c_1 k|u|}-1}\leq C_1 e^{(\sqrt{2}-\sqrt{3}c_1)k|u|}
\eeqs
and it is easy to check that $\sqrt{2}-\sqrt{3}c_1<0$. If we put $c=\sqrt{3}c_1-\sqrt{2}$, we get
\beqs
|f(w)|\leq C_1 e^{-ck|u|}\leq C_1 e^{-ck(|x|-|u-x|)}\leq C_1 e^{ck r_1\sqrt{d}}e^{-ck|x|}= C_2 e^{-ck|x|}.
\eeqs
Hence $\ds \left|\partial_x^{\alpha}f(x)\right|\leq C_2\frac{\alpha !}{r_1^{|\alpha|}}e^{-ck|x|}$. For $x\in (B(0,r_0/2)\cap \RR^d_x)\backslash\{0\}$, if we take $r_2>0$ small enough such that $\overline{B(x,2d r_2)}\subseteq B(0,r_0)$ we have (from Cauchy integral formula)
\beqs
\left|\partial_x^{\alpha}f(x)\right|=\left|\partial_z^{\alpha}\left(\frac{g_k(x)}{g_{2k}(x)}\right)\right|\leq \frac{\alpha!}{r_2^{\alpha}}\sup_{|w_1-x_1|\leq r_2,...,|w_d-x_d|\leq r_2}|g(w)|\leq C_{r_0} \frac{\alpha!}{r_2^{\alpha}}\leq C_3 \frac{\alpha!}{r_2^{\alpha}}e^{-ck|x|}.
\eeqs
Because $f(x)$ is in $\mathcal{C}^{\infty}(\RR^d)$ the same inequality will hold for the derivatives in $x=0$. If we take $r=\min\{r_1,r_2\}$ we get that, for $x\in\RR^d$,
\beq\label{35}
\left|\partial_x^{\alpha}f(x)\right|\leq C\frac{\alpha!}{r^{\alpha}}e^{-ck|x|},
\eeq
for some $C>0$. From this it easily follows that $\ds f(x)=\frac{\cosh(k|x|)}{\cosh(2k|x|)}\in\SSS^{*}$.
\end{proof}

\begin{lemma}\label{37}
If $\psi\in\SSS^{*}$ and $T\in\SSS'^{*}$ then $\psi T\in\OO_C'^{*}$.
\end{lemma}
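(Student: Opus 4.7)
The goal is to show that $\varphi \mapsto (\psi T)*\varphi$ is well-defined and continuous from $\SSS^*$ to itself. Since $\psi \in \SSS^* \subset \OO_M^*$, the product $\psi T$ is an element of $\SSS'^*$. For $\varphi \in \SSS^*$, I write formally
\[
F(x) := (\psi T)*\varphi(x) = \langle T(y),\,\psi(y)\varphi(x-y)\rangle;
\]
the pairing is meaningful because $y \mapsto \psi(y)\varphi(x-y)$ belongs to $\SSS^*$ (direct Leibniz estimate using $\psi,\varphi \in \SSS^*$), and one checks that $x \mapsto \psi(\cdot)\varphi(x-\cdot)$ is smooth with values in $\SSS^*$, giving $D^\alpha F(x) = \langle T(y),\psi(y)(D^\alpha\varphi)(x-y)\rangle$. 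The problem is then reduced to bounding, in the Beurling case, $\|D^\alpha F\cdot e^{M(m|\cdot|)}\|_{L^\infty} \leq C\,M_\alpha m^{-|\alpha|}$ uniformly in $\alpha$, for every target $m>0$, with $C$ depending continuously on $\varphi$; the Roumieu case is analogous using the family $\|\cdot\|_{(r_p),(s_q)}$.

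By continuity of $T$ there is a fixed $m_T>0$ (resp.\ a pair $(r_p),(s_q) \in \mathfrak{R}$) with $|\langle T,\phi\rangle| \leq C_T \|\phi\|_{m_T}$. Applying Leibniz to $D^\gamma_y(\psi(y)(D^\alpha\varphi)(x-y))$, combined with the Schwartz-type bounds
\[
|D^\beta\psi(y)| \leq \|\psi\|_h M_\beta h^{-|\beta|} e^{-M(h|y|)},\qquad |D^\kappa\varphi(x-y)| \leq \|\varphi\|_h M_\kappa h^{-|\kappa|} e^{-M(h|x-y|)},
\]
and the inequality $M_\beta M_{\alpha+\gamma-\beta} \leq c_0 H^{|\alpha|+|\gamma|} M_\alpha M_\gamma$ (consequence of $(M.1)$ and $(M.2)$), the estimate reduces to controlling the exponential factor $e^{-M(h|y|)-M(h|x-y|)+M(m_T|y|)}$ together with a prefactor of the form $(2m_TH/h)^{|\gamma|}(H/h)^{|\alpha|} M_\alpha M_\gamma$.

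The key device is to buy decay in $x$ from $\psi$'s rapid decay. From $(M.2)$ one obtains the stability relation $2M(\rho) \leq M(H\rho) + \log c_0$ by the direct calculation
\[
e^{2M(\rho)} = \sup_{p,q}\frac{\rho^{p+q}}{M_pM_q} \leq c_0\sup_r\frac{(H\rho)^r}{M_r} = c_0 e^{M(H\rho)}.
\]
Consequently $\psi$'s decay of order $e^{-M(Hh|y|)}$ factors as $C e^{-M(h|y|)} e^{-M(h|y|)}$; the first copy absorbs the seminorm weight $e^{M(m_T|y|)}$ (upon choosing $h \geq m_T$), while the second combines with $e^{-M(h|x-y|)}$ via $|x|\leq|y|+|x-y|$ and the standard subadditivity $M(\lambda+\nu) \leq M(2\lambda)+M(2\nu)+\log 2$, producing a factor of $e^{-M((h/2)|x|)}$. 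Choosing $h$ large enough in terms of $m_T$, the target $m$, and $H$ collapses the $\gamma$-prefactor to a bounded quantity and leaves exactly the right $\alpha$-dependence $M_\alpha m^{-|\alpha|}$; linearity of the final bound in $\|\psi\|_{Hh}$ and $\|\varphi\|_h$ yields continuity of $\varphi \mapsto F$.

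The main obstacle is the parameter bookkeeping: $h$ must simultaneously dominate $m_T$ (so that $M(m_T|y|) \leq M(h|y|)$), be large enough that $M((h/2)|x|) \geq M(m|x|)$, and satisfy $2m_TH/h \leq 1$ while $(H/h)^{|\alpha|}M_\alpha$ reproduces $M_\alpha m^{-|\alpha|}$ after absorbing one more factor of $H$ through $(M.2)$. In the Roumieu setting the scalar $h$ is replaced by a sequence in $\mathfrak{R}$ obtained via Lemma \ref{pst}, and the stability $2M(\rho) \leq M(H\rho)+\log c_0$ is replaced by a corresponding splitting for $N_{r_p}$ using an intermediate sequence $(r'_p) \in \mathfrak{R}$; the structural argument is otherwise identical.
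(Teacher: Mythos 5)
Your proof is correct, at least in the Beurling case which you carry out in detail, but it takes a genuinely different route from the paper. The paper goes through two reductions: first it applies the Fourier-transform bijection $\OO_C'^{*}\leftrightarrow\OO_M^{*}$ (Proposition~8 of \cite{PBD}) together with $\mathcal{F}(\psi T)=\mathcal{F}\psi*\mathcal{F}T$ to reduce the claim to showing $\psi*T\in\OO_M^{*}$; then it invokes the structure theorem $T=P(D)F$ with $F$ continuous of ultrapolynomial growth (Theorem~2 of \cite{PilipovicT}), so that $\psi*T=P(D)\psi*F$ and only the trivial convolution $\psi*F$ of a test function with a function needs estimating, the multiplier property being checked against Proposition~7 of \cite{PBD}. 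You instead attack the definition of $\OO_C'^{*}$ head-on, writing $(\psi T)*\varphi(x)=\langle T(y),\psi(y)\varphi(x-y)\rangle$ and bounding this by the continuity seminorm of $T$, Leibniz, the $(M.1)$--$(M.2)$ chain $M_{\beta}M_{\alpha+\gamma-\beta}\leq M_{\alpha+\gamma}\leq c_0H^{|\alpha|+|\gamma|}M_{\alpha}M_{\gamma}$, and the stability $2M(\rho)\leq M(H\rho)+\log c_0$ to squeeze out the weight $e^{-M((h/2)|x|)}$ via subadditivity of $M$. What you buy is self-containment (no Fourier bijection, no structure theorem, no external multiplier criterion) at the cost of heavier parameter bookkeeping; what the paper buys is that after its reductions the remaining estimate is almost trivial because $F$ is a function and $\psi*F$ is a classical absolutely convergent integral. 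One remark on your sketch: in the Roumieu case you do not in fact need to replace the scalar $h$ by a sequence from $\mathfrak{R}$ via Lemma~\ref{pst}; the fixed $h_\psi,h_\varphi>0$ from $\SSS^{\{M_p\}}$ suffice, because $T$ is controlled by a $\|\cdot\|_{(r_p),(s_q)}$-seminorm and the weight $e^{N_{s_p}(|y|)}$ can be absorbed into $e^{M(k|y|)}$ for $k$ small (at the price of a constant), after which the same $(M.2)$-splitting of $e^{-M(h_\psi|y|)}$ applies verbatim and $\sup_{\gamma}\bigl(H(1/h_\psi+1/h_\varphi)\bigr)^{|\gamma|}/\prod_{p\leq|\gamma|}r_p$ is finite automatically since $r_p\to\infty$. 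So the Roumieu case is if anything simpler than your remark suggests, and the proof as a whole is sound.
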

\begin{proof} The Fourier transform is a bijection between $\OO_C'^{*}$ and $\OO_M^{*}$ (see proposition 8 of \cite{PBD}) and $\mathcal{F}(\psi T)=\mathcal{F}\psi*\mathcal{F}T$. Hence, it is enough to prove that $\psi * T\in\OO_M^{*}$ for all $\psi\in\SSS^{*}$ and $T\in\SSS'^{*}$. From the representation theorem of ultradistributions in $\SSS'^*$ (theorem 2 of \cite{PilipovicT}), there exists locally integrable function $F(x)$ (in fact it can be taken to be continuous) such that there exist $m,C>0$, resp. for every $m>0$ there exists $C>0$, such that $\ds\left\|F(x) e^{-M(m|x|)}\right\|_{L^{\infty}}\leq C$ and an ultradifferential operator $P(D)$ of class * such that $T=P(D)F$. Because
\beqs
\psi *T=\psi *P(D)F=P(D)(\psi *F)=P(D)\psi *F
\eeqs
and $P(D)\psi\in\SSS^*$ it is enough to prove that for every $\psi\in\SSS^*$ and every such $F$, $\psi *F\in\OO_M^*$. We will give the proof only in the $\{M_p\}$ case, the $(M_p)$ case is similar. Let $\psi$ and $F$ are such function. There exists $h>0$ such that
\beqs
\sup_{\alpha\in\NN^d}\sup_{x\in\RR^d}\frac{h^{|\alpha|}e^{M(h|x|)}\left|D^{\alpha}\psi(x)\right|}{M_{\alpha}}<\infty.
\eeqs
Take $m$ such that $\ds\int_{\RR^d}e^{-M(h|t|)}e^{M(2m|t|)}dt$ is finite. Later on we will impose another condition on $m$. Then $\ds\left\|F(x) e^{-M(m|x|)}\right\|_{L^{\infty}}\leq C_m$. Note that $e^{M(m|x-t|)}\leq 2e^{M(2m|x|)} e^{M(2m|t|)}$ (one easily proves that for $\lambda,\nu>0$, $e^{M(\lambda+\nu)}\leq 2e^{M(2\lambda)}e^{M(2\nu)}$), so we have
\beqs
\left|D^{\alpha}(\psi *F)(x)\right|&\leq&\int_{\RR^d}\left|D^{\alpha}\psi(t)\right||F(x-t)|dt\leq C'C_m \int_{\RR^d}\frac{e^{-M(h|t|)}M_{\alpha}}{h^{|\alpha|}}e^{M(m|x-t|)}dt\\
&\leq& C'C_m C''\frac{e^{M(2m|x|)}M_{\alpha}}{h^{|\alpha|}}\int_{\RR^d}e^{-M(h|t|)}e^{M(2m|t|)}dt\leq
C\frac{e^{M(2m|x|)}M_{\alpha}}{h^{|\alpha|}}.
\eeqs
We will use the equivalent condition given in proposition 7 of \cite{PBD} for a $\mathcal{C}^{\infty}$ function to be a multiplier for $\SSS'^{\{M_p\}}$. Let $k>0$ be arbitrary but fixed. Take $m$ small enough such that $2m\leq k$. Choose $h_1<h$. Then, by the previous estimates, we obtain
\beqs
\frac{h_1^{|\alpha|} e^{-M(k|x|)}\left|D^{\alpha}(\psi *F)(x)\right|}{M_{\alpha}}\leq
C\frac{h_1^{|\alpha|} e^{-M(k|x|)}e^{M(2m|x|)}M_{\alpha}}{h^{|\alpha|}M_{\alpha}}\leq C,
\eeqs
hence $\psi *F$ is a multiplier for $\SSS'^{\{M_p\}}$ and the proof is complete.
\end{proof}

\indent For $S\in B^{*}$, by lemma \ref{27}, for $k>0$, $\ds\frac{\cosh(k|x|)}{\cosh(2k|x|)}\in\SSS^{*}$ and by lemma \ref{37} we have
\beqs
\cosh(k|x|)S=\frac{\cosh(k|x|)}{\cosh(2k|x|)}\cosh(2k|x|)S\in\OO_C'^{*}.
\eeqs
Similarly as in the proof of lemma \ref{27} one can prove that $\left(\cosh(k|x|)\right)^{-1}\in\SSS^{*}$, for $k>0$. So, for $S\in B^{*}$, we also have $\ds S=\left(\cosh(k|x|)\right)^{-1}\cosh(k|x|)S\in\OO_C'^{*}$. Using this, we get
\beq\label{40}
B^{*}=\left\{S\in\DD'^{*}|\cosh(k|x|) S\in\OO_C'^{*},\, \forall k\geq 0\right\}.
\eeq
\begin{lemma}
$\OO_C'^{(M_p)}\subseteq\DD'^{(M_p)}_{L^1}$ and $\OO_C'^{\{M_p\}}\subseteq \tilde{\DD}'^{\{M_p\}}_{L^1}$.
\end{lemma}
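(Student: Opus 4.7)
My plan is to invoke a structural description of convolutors that is the dual analogue of the representation theorem for $\SSS'^*$ used in the proof of Lemma \ref{37}. Concretely, every $T\in\OO_C'^*$ should admit a representation
\beqs
T=P(D)F,
\eeqs
where $P(D)$ is an ultradifferential operator of class $*$ and $F$ is a continuous function with exponential decay, namely $\|F(x)e^{M(k|x|)}\|_{L^\infty}<\infty$ for some $k>0$ in the $(M_p)$ case, resp.\ for every $k>0$ in the $\{M_p\}$ case. This is the natural mirror of the $\SSS'^*$ representation (growth is traded for decay) and should be available from \cite{PBD}; alternatively, it can be derived via the Fourier isomorphism $\mathcal{F}:\OO_C'^*\to\OO_M^*$ by dividing $\widehat T\in\OO_M^*$ by a suitably chosen ultrapolynomial $P_l$, resp.\ $P_{l_p}$, of Proposition \ref{orn}, so that $F=\mathcal{F}^{-1}(\widehat T/P_l)$ inherits the decay coming from the estimate $|1/P_l(x)|\le Ce^{-M(|x|/k)}$.

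Granted such a representation, the inclusion is short. Exponential decay gives $F\in L^1(\RR^d)$, and for $\varphi\in\DD^*$,
\beqs
\langle T,\varphi\rangle=\langle F,P(-D)\varphi\rangle=\int_{\RR^d}F(x)\,P(-D)\varphi(x)\,dx.
\eeqs
Writing $P(D)=\sum_\alpha c_\alpha D^\alpha$, the coefficient bound $|c_\alpha|\le CL^{|\alpha|}/M_\alpha$ (for some $L>0$ in the $(M_p)$ case, resp.\ for every $L>0$ with a constant $C_L$ in the $\{M_p\}$ case) yields, termwise,
\beqs
\|P(-D)\varphi\|_{L^\infty}\le \sum_\alpha |c_\alpha|\,\|D^\alpha\varphi\|_{L^\infty}\le C'\,p(\varphi),
\eeqs
where $p$ is one of the defining seminorms on $\DD_{L^\infty}^{(M_p)}$, resp.\ on $\tilde{\DD}_{L^\infty}^{\{M_p\}}$. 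In the Roumieu case one must select $(t_j)\in\mathfrak{R}$ so that $\prod_{j=1}^{|\alpha|}t_j$ dominates $L^{|\alpha|}$, which is possible since $L$ can be chosen arbitrarily small while $t_j\to\infty$. Combined with $F\in L^1$, this produces $|\langle T,\varphi\rangle|\le \|F\|_{L^1}\,C'\,p(\varphi)$, so $T$ extends by density of $\DD^*$ in $\dot{\mathcal{B}}^{(M_p)}$, resp.\ $\dot{\tilde{\mathcal{B}}}^{\{M_p\}}$, to a continuous linear functional on that space, i.e., $T\in\DD'^{(M_p)}_{L^1}$, resp.\ $T\in\tilde{\DD}'^{\{M_p\}}_{L^1}$.

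The main obstacle is securing the correct decay of $F$, especially in the Roumieu case, where the estimate must hold at every rate $k>0$ simultaneously. If one goes via the Fourier route, this requires carefully coordinating the growth of $\widehat T\in\OO_M^{\{M_p\}}$ with the sequence $(l_p)\in\mathfrak{R}$ defining $P_{l_p}$, so that the decay supplied by $1/P_{l_p}$ both absorbs the growth of $\widehat T$ and leaves room to reach the desired rate on $F$. Once that step is in place, what remains is the straightforward ultradistributional counterpart of the classical embedding $\OO_C'(\RR^d)\subseteq\DD'_{L^1}(\RR^d)$, with the ultrapolynomials of Proposition \ref{orn} taking the role that the classical polynomial multipliers $(1+|x|^2)^N$ play in the distributional case.
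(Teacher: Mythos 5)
Your proof is correct and follows essentially the same route as the paper: invoke the structural representation $S=P(D)F_1+F_2$ of a convolutor from \cite{PBD}, with $F_1,F_2$ continuous and $\|e^{M(k|\cdot|)}(|F_1|+|F_2|)\|_{L^\infty}<\infty$, then bound $|\langle S,\varphi\rangle|$ by $\|P(-D)\varphi\|_{L^\infty}$ and conclude by density of $\DD^*$ in $\dot{\mathcal{B}}^{(M_p)}$, resp.\ $\dot{\tilde{\mathcal{B}}}^{\{M_p\}}$. One clarification on the ``obstacle'' you flag at the end: the cited Roumieu representation supplies decay at only \emph{some} fixed $k>0$, and that is all the argument needs (it already gives $F\in L^1$), so there is no requirement that the estimate hold at every rate $k>0$ simultaneously; likewise, the alternative Fourier route you sketch is not what the paper does and is incomplete as stated, since dividing $\widehat T$ by $P_{l_p}$ only yields boundedness of $\mathcal{F}^{-1}(\widehat T/P_{l_p})$, not pointwise decay of $F$, without additional control of the derivatives of $\widehat T/P_{l_p}$.
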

\begin{proof} We will give the proof only in the $\{M_p\}$ case, the $(M_p)$ case is similar. Let $S\in\OO_C'^{\{M_p\}}$. From proposition 2 of \cite{PBD}, there exist $k>0$ and $\{M_p\}$ - ultradifferential operator $P(D)$ such that $S=P(D)F_1+F_2$ where $\left\|e^{M(k|x|)}\left(|F_1(x)|+|F_2(x)|\right)\right\|_{L^{\infty}}<\infty$. We will assume that $F_2=0$ and put $F=F_1$. The general case is proved analogously. Let $\varphi\in\DD^{\{M_p\}}$. We have
\beqs
\left|\langle S,\varphi\rangle\right|=|\langle F,P(-D)\psi\rangle|\leq \left\|e^{M(k|\cdot|)}F\right\|_{L^{\infty}}\left\|e^{-M(k|\cdot|)}\right\|_{L^1}\|P(-D)\varphi\|_{L^{\infty}}\leq Cp_{(t_j)}(\varphi),
\eeqs
for some $C>0$ and $(t_j)\in\mathfrak{R}$, where, the last inequality follows from the fact that $P(D): \dot{\tilde{\mathcal{B}}}^{\{M_p\}}\longrightarrow \dot{\tilde{\mathcal{B}}}^{\{M_p\}}$ is continuous. Because $\DD^{\{M_p\}}$ is dense in $\dot{\tilde{\mathcal{B}}}^{\{M_p\}}$, the claim in the lemma follows.
\end{proof}

\indent If we use the previous lemma in (\ref{40}), we get
\beq
B^{(M_p)}=\left\{S\in\DD'^{(M_p)}|\cosh(k|x|) S\in\DD'^{(M_p)}_{L^1},\, \forall k\geq 0\right\},\label{45}\\
B^{\{M_p\}}=\left\{S\in\DD'^{\{M_p\}}|\cosh(k|x|) S\in\tilde{\DD}'^{\{M_p\}}_{L^1},\, \forall k\geq 0\right\}.\label{46}
\eeq

Now we will give the theorem that characterizes the elements of $\DD'^*$ for which the convolution with $e^{s|x|^2}$ exists as an element of $\DD'^*$.

\begin{theorem}\label{ter}
Let $s\in\RR$, $s\neq 0$. Then
\begin{itemize}
\item[a)] The convolution of $S\in\DD'^{*}$ and $e^{s|x|^2}$ exists if and only if $S\in B^{*}_s$.
\item[b)] $\mathcal{L}: B^{*}\longrightarrow A^{*}$ is well defined and bijective mapping. For $S\in B^{*}$ and $\xi,\eta\in\RR^d$, $e^{-(\xi+i\eta)x}S(x)\in\DD_{L^1}'^{(M_p)}\left(\RR^d_x\right)$, resp. $e^{-(\xi+i\eta)x}S(x)\in\tilde{\DD}_{L^1}'^{\{M_p\}}\left(\RR^d_x\right)$ and the Laplace transform of $S$ is given by $\mathcal{L}(S)(\xi+i\eta)=\left\langle e^{-(\xi+i\eta)x}S(x),1_x\right\rangle$.
\item[c)] The mapping $B^{*}_s\longrightarrow A^{*}_s$, $S\mapsto S* e^{s|x|^2}$ is bijective and for $S\in B^*_s$, $\left(S*e^{s|\cdot|^2}\right)(x)=e^{s|x|^2}\mathcal{L}\left(e^{s|\cdot|^2}S\right)(2sx)$.
\end{itemize}
\end{theorem}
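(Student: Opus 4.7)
The plan is to first prove (b), establishing $\mathcal L$ as a bijection $B^{*}\to A^{*}$, and then derive (a) and (c) by using the identity $e^{s|x-y|^{2}}=e^{s|x|^{2}}e^{-2sxy}e^{s|y|^{2}}$ together with the change of variable $(x,y)\mapsto(x+y,y)$, which turns the Gaussian convolution into a Laplace transform of $e^{s|\cdot|^{2}}S$. In this way, both existence in (a) and the closed formula in (c) reduce via (b) to an analytic statement about entire functions.

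For the well-definedness $\mathcal L:B^{*}\to A^{*}$, I would take $S\in B^{*}$ and for each $\xi\in\RR^{d}$ factorize
\[
e^{-x\xi}S=\frac{e^{-x\xi}}{\cosh(2|\xi||x|)}\cdot\cosh(2|\xi||x|)S.
\]
The first factor lies in $\SSS^{*}$ by a complex-analytic estimate modeled on Lemma~\ref{27} (with $g_{k}(z)$ replaced by $g_{k}(z)e^{-z\xi}$ and using $|e^{-z\xi}|\leq e^{|\xi||\mathrm{Re}\,z|}$), and (\ref{40}) places the second in $\OO_{C}'^{*}$, so Lemma~\ref{37} gives $e^{-x\xi}S\in\OO_{C}'^{*}\subset\SSS'^{*}$; Theorem~\ref{t1} with $B=\RR^{d}$ then produces an entire Laplace transform with exactly the growth defining $A^{*}$. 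The same factorization, combined with (\ref{45})--(\ref{46}), places $e^{-(\xi+i\eta)x}S\in\DD_{L^{1}}'^{(M_{p})}$ (resp.\ $\tilde{\DD}_{L^{1}}'^{\{M_{p}\}}$), justifying the dual-pairing formula. Injectivity of $\mathcal L$ follows from injectivity of $\mathcal F$ on $\SSS'^{*}$.

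The main obstacle is surjectivity. Given $f\in A^{*}$, Theorem~\ref{t2} furnishes $S\in\DD'^{*}$ with $\mathcal L(S)=f$ and $e^{-x\xi}S\in\SSS'^{*}$ for all $\xi\in\RR^{d}$, and I still need $\cosh(k|x|)S\in\SSS'^{*}$ for every $k>0$. My plan is to fix an ultradifferentiable partition of unity $\{\chi_{\epsilon}\}_{\epsilon\in\{\pm1\}^{d}}\subset\EE^{*}$ (available by the non-quasi-analyticity supplied by $(M.3)$) subordinate to thickenings of the closed orthants $O_{\epsilon}=\{x:\epsilon_{j}x_{j}\geq 0\}$, on whose supports $\epsilon\cdot x\geq c|x|$ for a fixed $c>0$, and decompose
\[
\cosh(k|x|)\,S=\sum_{\epsilon}\bigl[\chi_{\epsilon}(x)\cosh(k|x|)e^{-(k/c)\epsilon\cdot x}\bigr]\cdot\bigl[e^{(k/c)\epsilon\cdot x}S\bigr].
\]
The second bracket is $e^{-x\xi_{\epsilon}}S\in\SSS'^{*}$ with $\xi_{\epsilon}=-(k/c)\epsilon$; the first is bounded on its support (since $\cosh(k|x|)\leq e^{k|x|}\leq e^{(k/c)\epsilon\cdot x}$ there) with $M_{p}$-type derivative estimates inherited from the real-analyticity of $\cosh(k|x|)e^{-(k/c)\epsilon\cdot x}$ and the class of $\chi_{\epsilon}$, placing it in $\OO_{M}^{*}$. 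Rigorously verifying this $\OO_{M}^{*}$ membership uniformly in both the Beurling and Roumieu settings is the technical heart of the whole argument.

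For (a) and (c), with $S=e^{-s|\cdot|^{2}}T$ and $T\in B^{*}$ (so $S\in B_{s}^{*}$), the identity above, the change of variables $(u,v)=(x+y,y)$, and a further substitution $w=u-v$ in the inner integral give, for $\varphi\in\DD^{*}$,
\[
\langle(S\otimes e^{s|\cdot|^{2}})\varphi^{\Delta},1\rangle=\int\varphi(u)\,e^{s|u|^{2}}\mathcal L(T)(2su)\,du.
\]
Since $\mathcal L(T)\in A^{*}$ by (b), the right-hand side is absolutely convergent, and the same growth bound produces continuity of $(S\otimes e^{s|\cdot|^{2}})\varphi^{\Delta}$ in the alternative topology on $\DD_{L^{\infty}}^{(M_{p})}$ (resp.\ $\tilde{\DD}_{L^{\infty}}^{\{M_{p}\}}$) in which $\psi_{j}\to 1$; this simultaneously gives existence of the convolution and the formula $(S*e^{s|\cdot|^{2}})(x)=e^{s|x|^{2}}\mathcal L(e^{s|\cdot|^{2}}S)(2sx)\in A_{s}^{*}$. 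Conversely, if $S*e^{s|\cdot|^{2}}$ exists, the same chain of identities recovers $\mathcal L(e^{s|\cdot|^{2}}S)$ as an entire function with $A^{*}$-type growth, so (b) yields $e^{s|\cdot|^{2}}S\in B^{*}$, i.e.\ $S\in B_{s}^{*}$. The bijectivity of $S\mapsto S*e^{s|\cdot|^{2}}:B_{s}^{*}\to A_{s}^{*}$ in (c) is then immediate from the explicit formula combined with the bijectivity of $\mathcal L$ from (b).
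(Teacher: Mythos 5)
Your overall architecture agrees with the paper's: prove (b) first, then derive (c) and the forward direction of (a). For well‑definedness, injectivity, and the $\DD'_{L^1}$‑membership in (b) your factorizations are essentially the paper's arguments. The converse direction of (a), however, contains a genuine gap, and the forward direction of (a) is only sketched at the point where the actual work lies.

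For the converse of (a) you write that ``the same chain of identities recovers $\mathcal L(e^{s|\cdot|^2}S)$ as an entire function with $A^*$-type growth, so (b) yields $e^{s|\cdot|^2}S\in B^*$.'' This is circular: $\mathcal L$ is only defined on $B^*$ (one needs $e^{-\zeta x}e^{s|x|^2}S\in\SSS'^*$ for $\zeta$ ranging over an open set before the Laplace transform is even a well‑posed object), so you cannot invoke $\mathcal L(e^{s|\cdot|^2}S)$ as an entire function and then conclude $e^{s|\cdot|^2}S\in B^*$. To break the circularity one must extract, directly from the hypothesis that $(\varphi*e^{s|\cdot|^2})S\in\DD'^{(M_p)}_{L^1}$ (resp.\ $\tilde\DD'^{\{M_p\}}_{L^1}$) for all $\varphi\in\DD^*$, the conclusion that $\cosh(l|x|)e^{s|x|^2}S$ belongs to those same spaces for every $l$. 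The paper does exactly this, and it is the technically hardest part of the whole theorem: it constructs explicit elements of $\DD^*_{L^\infty}$, namely the reciprocals $e^{-2sxx_0-2\varepsilon|s|\sqrt{1+|x|^2}}\bigl(\int\varphi(y)e^{s|y|^2-2sxy}\,dy\bigr)^{-1}$ with $\varphi\geq 0$ of small support around $x_0$, multiplies through to get $e^{-2sxx_0-2\varepsilon|s|\sqrt{1+|x|^2}}e^{s|x|^2}S\in\DD'^{(M_p)}_{L^1}$ for all $x_0,\varepsilon$, symmetrizes to obtain $\cosh(xx_0)$, passes to $\cosh(l|x|)$ via $\cosh(l|x|)\leq 2\sum_j\cosh(x^{(j)}x)$, and finally strips off the regularizing factor $e^{-\varepsilon\sqrt{1+|x|^2}}$. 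None of that is supplied by your ``same chain of identities.'' Similarly, in the forward direction the formula $\langle(S\otimes e^{s|\cdot|^2})\varphi^\Delta,1\rangle=\int\varphi(u)e^{s|u|^2}\mathcal L(T)(2su)\,du$ presupposes that the left side is a valid dual pairing, i.e.\ that $(S\otimes e^{s|\cdot|^2})\varphi^\Delta\in\DD'^{(M_p)}_{L^1}$ resp.\ $\tilde\DD'^{\{M_p\}}_{L^1}$. This requires building the multiplier $f=(\cosh(k|x|))^{-1}\int_{\RR^d}\varphi(y)e^{s|y|^2-2sxy}\,dy\in\DD^*_{L^\infty}$ and, in the Roumieu case, a separate verification of hypocontinuity of the bilinear map on $\DD^{\{M_p\}}_K\times\dot{\tilde{\mathcal B}}^{\{M_p\}}$; saying the growth of $\mathcal L(T)$ ``produces continuity in the alternative topology'' skips these steps.

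On the positive side, your surjectivity argument for (b) takes a different and perfectly reasonable route. The paper deduces $\cosh(k|x|)T\in\SSS'^*$ by writing $\cosh(k|x|)T=\bigl[\cosh(k|x|)\bigl(\sum_j\cosh(2x^{(j)}x)\bigr)^{-1}\bigr]\bigl[\sum_j\cosh(2x^{(j)}x)\,T\bigr]$, relying on the inequality (\ref{85}) and a complex‑analytic estimate to show the bracketed ratio lies in $\DD^*_{L^\infty}\subseteq\OO_M^*$. Your conical partition of unity $\{\chi_\epsilon\}_{\epsilon\in\{\pm1\}^d}$ subordinate to orthant neighborhoods, with $\epsilon\cdot x\geq c|x|$ on $\supp\chi_\epsilon$, achieves the same: $\chi_\epsilon(x)\cosh(k|x|)e^{-(k/c)\epsilon\cdot x}$ is bounded with factorial‑type derivative estimates by a Cauchy‑formula argument on a conical tube, hence lies in $\OO_M^*$, while $e^{(k/c)\epsilon\cdot x}S\in\SSS'^*$. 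Both decompositions reduce $\cosh(k|x|)T$ to finitely many applications of the multiplier theorem; yours uses $2^d$ geometric pieces and the non‑quasi‑analyticity supplied by $(M.3)$ to build $\chi_\epsilon\in\EE^*$, the paper's uses $d$ algebraic pieces and the $\cosh$‑inequality. They buy comparable things, and your version is in some ways more transparent geometrically, so it is a legitimate alternative once the $\OO_M^*$ estimates you flag as ``the technical heart'' are written out.
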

\begin{proof} First we will prove $a)$. Let $S\in B^{*}_s$. Let $\varphi\in\DD^{*}$ is fixed and $K\subset\subset \RR^d$, such that $\mathrm{supp\,}\varphi\subseteq K$. Note that
\beqs
\left(\varphi *e^{s|\cdot|^2}\right)(x)=e^{s|x|^2}\int_{\RR^d}\varphi(y)e^{s|y|^2-2sxy}dy
\eeqs
and define $\ds f(x)=\left(\cosh(k|x|)\right)^{-1}\int_{\RR^d}\varphi(y)e^{s|y|^2-2sxy}dy$ where $k$ will be chosen later. Put $l=\sup\{|y||y\in K\}$ to simplify notations. We will prove that $f\in\DD_{L^{\infty}}^*$, for large enough $k$. For $w\in \CC^d$, put $\ds g(w)=\int_{\RR^d}\varphi(y)e^{s|y|^2-2swy}dy$. Then $g(w)$ is an entire function. To estimate its derivatives we use the Cauchy integral formula and obtain
\beqs
\left|\partial^{\alpha}g(x)\right|\leq \frac{\alpha!}{r^{|\alpha|}}\sup_{|w_1-x_1|\leq r,...,|w_d-x_d|\leq r}|g(w)|.
\eeqs
Take $r<1/(2dl|s|)$. We put $w=\xi+i\eta$ and estimate
\beqs
|g(w)|&\leq& \int_{\RR^d}|\varphi(y)|e^{s|y|^2-2s\xi y}dy\leq e^{2|s||\xi|l}\|\varphi\|_{L^{\infty}}\int_{K}e^{s|y|^2}dy=
c''\|\varphi\|_{L^{\infty}}e^{2l|s||\xi|}\\
&\leq& c''\|\varphi\|_{L^{\infty}}e^{2l|s|\left(|x|+|\xi-x|\right)}= c''\|\varphi\|_{L^{\infty}} e^{2l|s||\xi-x|}e^{2l|s||x|}\leq 3c''\|\varphi\|_{L^{\infty}}e^{2l|s||x|},
\eeqs
where we denote $\ds c''=\int_{K}e^{s|y|^2}dy$. Hence, we get
\beq\label{50}
\left|\partial_x^{\alpha}g(x)\right|\leq \frac{3c''\|\varphi\|_{L^{\infty}}\alpha!}{r^{|\alpha|}}e^{2l|s||x|}.
\eeq
We can use the same methods as in the proof of lemma \ref{27} to prove that
\beqs
\left|D^{\alpha}\left(\frac{1}{\cosh(k|x|)}\right)\right|\leq C\frac{\alpha!}{r^{|\alpha|}}e^{-c'k|x|}
\eeqs
for some $C>0$, $c'>0$ and $c'$ doesn't depend on $k$. If we take $r>0$ small enough we can make it the same for (\ref{50}) and the above estimate. Now take $k$ large enough such that $2l|s|<c'k$. Then, for $h>0$ fixed, we have
\beqs
\frac{h^{|\alpha|}\left|D^{\alpha}f(x)\right|}{M_{\alpha}}&\leq&
\sum_{\beta\leq\alpha}{\alpha\choose\beta}\frac{h^{|\alpha|}\left|D^{\alpha-\beta}g(x)\right|
\left|D^{\beta}\left(\left(\cosh(k|x|)\right)^{-1}\right)\right|}{M_{\alpha}}\\
&\leq&3c'' C\|\varphi\|_{L^{\infty}} \sum_{\beta\leq\alpha}{\alpha\choose\beta}\frac{(2h)^{|\alpha|}(\alpha-\beta)!e^{2l|s||x|}
\beta!e^{-c'k|x|}}{2^{|\alpha|}r^{|\alpha-\beta|}r^{|\beta|}M_{\alpha}}\\
&\leq&\frac{3c'' C\|\varphi\|_{L^{\infty}}}{2^{|\alpha|}} \sum_{\beta\leq\alpha}{\alpha\choose\beta}\left(\frac{2h}{r}\right)^{|\alpha|}
\frac{\alpha!}{{M_{\alpha}}}e^{(2l|s|-c'k)|x|}\leq c''C'\|\varphi\|_{L^{\infty}},
\eeqs
where we use the fact $\ds\frac{k^p p!}{M_p}\rightarrow 0$ when $p\rightarrow\infty$. From the arbitrariness of $h$ we have $f\in\DD_{L^{\infty}}^*$. Because $\DD_{L^{\infty}}^{\{M_p\}}=\tilde{\DD}_{L^{\infty}}^{\{M_p\}}$ as a set, $f\in\tilde{\DD}_{L^{\infty}}^{\{M_p\}}$. Now, we obtain
\beqs
\left(\varphi *e^{s|\cdot|^2}\right)(x)S=f(x)\cosh(k|x|)e^{s|x|^2}S.
\eeqs
$e^{s|x|^2}S\in B^{*}$ (because $S\in B^{*}_s$), hence, by (\ref{45}), resp. (\ref{46}), $\cosh(k|x|)e^{s|x|^2}S\in \DD_{L^1}'^{(M_p)}$, resp. $\cosh(k|x|)e^{s|x|^2}S\in \tilde{\DD}_{L^1}'^{\{M_p\}}$. Hence $\left(\varphi *e^{s|x|^2}\right)S\in\DD_{L^1}'^{(M_p)}$, resp. $\left(\varphi *e^{s|x|^2}\right)S\in\tilde{\DD}_{L^1}'^{\{M_p\}}$. The theorem of \cite{PK} implies that the convolution of $S$ and $e^{s|x|^2}$ exists, in the $(M_p)$ case. Let us consider the $\{M_p\}$ case. If we prove that for arbitrary compact subset $K$ of $\RR^d$, the bilinear mapping $(\varphi,\chi)\mapsto \left\langle \left(\varphi*e^{s|\cdot|^2}\right)S,\chi\right\rangle$, $\DD^{\{M_p\}}_K\times \dot{\tilde{\mathcal{B}}}^{\{M_p\}}\longrightarrow \CC$, is continuous, theorem 1 of \cite{PB} will imply the existence of convolution of $S$ and $e^{s|x|^2}$. Let $K\subset\subset \RR^d$ be fixed. By the above consideration, we have
\beqs
\left|\left\langle \left(\varphi*e^{s|\cdot|^2}\right)S,\chi\right\rangle\right|=\left|\left\langle \cosh(k|x|)e^{s|x|^2}S(x),f(x)\chi(x)\right\rangle\right|\leq C_1p_{(t_j)}(f\chi),
\eeqs
for some $C_1>0$ and $(t_j)\in\mathfrak{R}$, where, in the last inequality, we used that $\cosh(k|x|)e^{s|x|^2}S\in \tilde{\DD}_{L^1}'^{\{M_p\}}$. For brevity, denote $T_{\alpha}=\prod_{j=1}^{|\alpha|}t_j$ and $T_0=1$. Observe that
\beqs
\frac{\left|D^{\alpha}\left(f(x)\chi(x)\right)\right|}{T_{\alpha}M_{\alpha}}&\leq& \sum_{\beta\leq\alpha}{\alpha\choose\beta} \frac{\left|D^{\beta}f(x)\right|\left|D^{\alpha-\beta}\chi(x)\right|}{T_{\beta}M_{\beta}T_{\alpha-\beta}M_{\alpha-\beta}} \leq \tilde{C}c''\|\varphi\|_{L^{\infty}}p_{(t_j/2)}(\chi)\\
&\leq& \tilde{C}c''p_{(t_j/2),K}(\varphi)p_{(t_j/2)}(\chi),
\eeqs
where we used the above estimates for the derivatives of $f$. Note that $c''$ does not depend on $\varphi$, only on $K$. From this, the continuity of the bilinear mapping in consideration follows.\\
\indent For the other direction, let the convolution of $S$ and $e^{s|x|^2}$ exists. Then, by the theorem of \cite{PK}, resp. theorem 1 of \cite{PB}, for every $\varphi\in\DD^{*}$, $\left(\varphi *e^{s|\cdot|^2}\right)S\in\DD_{L^1}'^{(M_p)}$, resp. $\left(\varphi *e^{s|\cdot|^2}\right)S\in\tilde{\DD}_{L^1}'^{\{M_p\}}$. Let $\varphi\in\DD^{*}$, such that $\varphi(y)\geq 0$. Put $U=\{y\in\RR^d|\varphi(y)\neq 0\}$ and $t=\sup\{|y||y\in \mathrm{supp\,}\varphi\}$. Then we have
\beqs
\int_{\RR^d}\varphi(y)e^{s|y|^2-2sxy}dy\geq
c e^{\inf_{y\in U}(-2 sxy)},
\eeqs
where $\ds c=\int_{\RR^d}\varphi(y)e^{s|y|^2}dy$. Let $x_0\in\RR^d$ and $\varepsilon>0$ be fixed. There exists $\varphi\in\DD^{*}$, such that $U\subseteq B(x_0,\varepsilon)$ ($B(x_0,\varepsilon)$ is the ball in $\RR^d$ with center at $x_0$ and radius $\varepsilon$). Then
\beqs
\inf_{y\in U}(-2 sxy)\geq \inf_{y\in B(x_0,\varepsilon)}(-2 sxy)=-2sxx_0+\inf_{y\in B(x_0,\varepsilon)}(-2 sx(y-x_0))\geq-2sxx_0-2\varepsilon|s||x|.
\eeqs
We get
\beqs
\int_{\RR^d}\varphi(y)e^{s|y|^2-2sxy}dy\geq c e^{-2sxx_0-2\varepsilon |s||x|}.
\eeqs
Define $\ds f(x)=e^{-2sxx_0-2\varepsilon |s|\sqrt{1+|x|^2}}\left(\int_{\RR^d}\varphi(y)e^{s|y|^2-2sxy}dy\right)^{-1}$. We will prove that $f\in\DD_{L^{\infty}}^{*}$. $\ds g(w)=\int_{\RR^d}\varphi(y)e^{s|y|^2-2swy}dy$ is an entire function. Put $w=\xi+i\eta$. Then, for $w$ in the strip $\RR^d_{\xi}+i\{\eta\in\RR^d||\eta|<1/(8|s|t)\}$ and $y\in\supp\varphi$, we have $|2s\eta y|\leq 2|s||\eta||y|\leq 1/4<\pi/4$, hence
\beqs
\left|\int_{\RR^d}\varphi(y)e^{s|y|^2-2swy}dy\right|\geq
\left|\int_{\RR^d}\varphi(y)e^{s|y|^2-2s\xi y}\cos(2s\eta y)dy\right|\geq \frac{1}{\sqrt{2}}\int_{\RR^d}\varphi(y)e^{s|y|^2-2s\xi y}dy>0.
\eeqs
Moreover, $e^{-2swx_0-2\varepsilon |s|\sqrt{1+w^2}}$ is analytic on the strip $\RR^d_{\xi}+i\{\eta\in\RR^d||\eta|<1/4\}$, where we take the principal branch of the square root which is single valued and analytic on $\CC\backslash (-\infty,0]$. So, for $r_0=\min\{1/4,1/(8|s|t)\}$, $f(w)$ is analytic on the strip $\RR^d+i\{\eta\in\RR^d||\eta|<r_0\}$. To estimate the derivatives of $f$, we use Cauchy integral formula and obtain
\beq\label{60}
\left|\partial^{\alpha}f(x)\right|\leq \frac{\alpha!}{r^{|\alpha|}}\sup_{|w_1-x_1|\leq r,...,|w_d-x_d|\leq r}|f(w)|,
\eeq
where $r<r_0/(2d)$. Put $\rho=\sqrt{\left(1+|\xi|^2-|\eta|^2\right)^2+4(\xi\eta)^2}$, $\ds\cos\theta= \frac{1+|\xi|^2-|\eta|^2}{\sqrt{\left(1+|\xi|^2-|\eta|^2\right)^2+4(\xi\eta)^2}}$ and $\ds\sin\theta=\frac{2\xi\eta}{\sqrt{\left(1+|\xi|^2-|\eta|^2\right)^2+4(\xi\eta)^2}}$, where $\theta\in (-\pi,\pi)$, from what it follows that $\theta\in(-\pi/2,\pi/2)$ (because $\cos\theta>0$ and $\theta\in (-\pi,\pi)$). Then
\beqs
\mathrm{Re\,}\left(\sqrt{1+w^2}\right)&=&
\mathrm{Re\,}\left(\sqrt{\rho}\left(\cos\frac{\theta}{2}+i\sin\frac{\theta}{2}\right)\right)=
\sqrt{\rho}\cos\frac{\theta}{2}=\sqrt{\rho}\sqrt{\frac{\cos\theta+1}{2}}\\
&=&\frac{\sqrt{\rho\cos\theta+\rho}}{\sqrt{2}}=
\frac{1}{\sqrt{2}}\sqrt{1+|\xi|^2-|\eta|^2+\sqrt{\left(1+|\xi|^2-|\eta|^2\right)^2+4(\xi\eta)^2}}\\
&\geq&\frac{1}{\sqrt{2}}\sqrt{1+|\xi|^2-|\eta|^2+1+|\xi|^2-|\eta|^2}=\sqrt{1+|\xi|^2-|\eta|^2},
\eeqs
where the first equality follows from the fact that we take the principal branch of the square root. We obtain
\beqs
|f(w)|&=&\frac{\left|e^{-2swx_0-2\varepsilon |s|\sqrt{1+w^2}}\right|}{\ds\left|\int_{\RR^d}\varphi(y)e^{s|y|^2-2sw y}dy\right|}\leq\frac{\sqrt{2}e^{-2s\xi x_0}
e^{-2\varepsilon |s|\mathrm{Re\,}\left(\sqrt{1+w^2}\right)}}{\ds\int_{\RR^d}\varphi(y)e^{s|y|^2-2s\xi y}dy}\\
&\leq&\frac{\sqrt{2}e^{-2s\xi x_0}e^{-2\varepsilon |s|\sqrt{1+|\xi|^2-|\eta|^2}}}
{\ds\int_{\RR^d}\varphi(y)e^{s|y|^2-2s\xi y}dy}\leq\frac{\sqrt{2}e^{-2s\xi x_0}e^{-2\varepsilon |s||\xi|}}
{c e^{-2s\xi x_0-2\varepsilon |s||\xi|}}\leq C''_0.
\eeqs
So, from (\ref{60}), we have $\left|\partial_x^{\alpha}f(x)\right|\leq C_0\alpha!/r^{|\alpha|}$, for some $C_0>0$. From this it easily follows that $f\in\DD_{L^{\infty}}^{*}$. Now we have
\beq
e^{-2sxx_0-2\varepsilon |s|\sqrt{1+|x|^2}}e^{s|x|^2}S&=&f(x)\left(\varphi *e^{s|\cdot|^2}\right)(x)S\in\DD_{L^1}'^{(M_p)},
\mbox{ resp.}\label{70}\\
e^{-2sxx_0-2\varepsilon |s|\sqrt{1+|x|^2}}e^{s|x|^2}S&=&f(x)\left(\varphi *e^{s|\cdot|^2}\right)(x)S\in\tilde{\DD}_{L^1}'^{\{M_p\}},\label{71}
\eeq
where we used the fact that $\left(\varphi *e^{s|\cdot|^2}\right)S\in\DD_{L^1}'^{(M_p)}$, resp. $\left(\varphi *e^{s|\cdot|^2}\right)S\in\tilde{\DD}_{L^1}'^{\{M_p\}}$ (which, as noted before, follows from the existence of the convolution of $S$ and $e^{s|x|^2}$) and these hold for every $x_0\in\RR^d$ and every $\varepsilon>0$. Now, put $x_0'=2sx_0$, $x_0''=-2sx_0$ and $\varepsilon'=2|s|\varepsilon$. Then, from (\ref{70}), resp. (\ref{71}), we have
\beqs
e^{-xx_0'-\varepsilon'\sqrt{1+|x|^2}}e^{s|x|^2}S\in\DD_{L^1}'^{(M_p)}&,& e^{xx_0''-\varepsilon'\sqrt{1+|x|^2}}e^{s|x|^2}S\in\DD_{L^1}'^{(M_p)}, \mbox{ resp.}\\
e^{-xx_0'-\varepsilon'\sqrt{1+|x|^2}}e^{s|x|^2}S\in\tilde{\DD}_{L^1}'^{\{M_p\}}&,& e^{xx_0''-\varepsilon'\sqrt{1+|x|^2}}e^{s|x|^2}S\in\tilde{\DD}_{L^1}'^{\{M_p\}}
\eeqs
and from arbitrariness of $x_0$ and $\varepsilon>0$ it follows
\beq\label{80}
\cosh(xx_0)e^{-\varepsilon\sqrt{1+|x|^2}}e^{s|x|^2}S\in\DD_{L^1}'^{(M_p)}, \mbox{ resp. } \cosh(xx_0)e^{-\varepsilon\sqrt{1+|x|^2}}e^{s|x|^2}S\in\tilde{\DD}_{L^1}'^{\{M_p\}}
\eeq
for all $x_0\in\RR^d$ and all $\varepsilon>0$. Let $l>0$. Take $x^{(j)}\in\RR^d$, $j=1,...,d$, to be such that $x^{(j)}_q=0$, for $j\neq q$ and $x^{(j)}_j=ld$. Then
\beq\label{85}
\cosh(l|x|)\leq e^{l|x|}\leq \prod_{j=1}^d e^{l|x_j|}\leq \left(\sum_{j=1}^d \frac{1}{d}e^{l|x_j|}\right)^d\leq
\sum_{j=1}^d e^{ld|x_j|}\leq 2\sum_{j=1}^d \cosh\left(x^{(j)}x\right).
\eeq
We will prove that $\ds \cosh(l|x|)\left(\sum_{j=1}^d \cosh\left(2x^{(j)}x\right)\right)^{-1}\in\DD_{L^{\infty}}^{*}$. The function $\ds\sum_{j=1}^d \cosh (2ld w_j)$ is an entire function of $w=\xi+i\eta$. Moreover, for $w\in U=\RR^d_{\xi}+i\{\eta\in\RR^d||\eta|<1/(4ld^2)\}$, we have
\beqs
\left|\sum_{j=1}^d \cosh (2ld w_j)\right|&=&\frac{1}{2}\left|\sum_{j=1}^d \left(e^{2ld \xi_j}+e^{-2ld \xi_j}\right)
\cos(2ld \eta_j)+i\sum_{j=1}^d \left(e^{2ld \xi_j}-e^{-2ld \xi_j}\right)
\sin(2ld \eta_j)\right|\\
&\geq& \frac{1}{2}\left|\sum_{j=1}^d \left(e^{2ld \xi_j}+e^{-2ld \xi_j}\right)
\cos(2ld \eta_j)\right|\geq\frac{\sqrt{2}}{4}\sum_{j=1}^d \left(e^{2ld \xi_j}+e^{-2ld \xi_j}\right)\\
&\geq&\frac{\sqrt{2}}{4}\sum_{j=1}^d e^{2ld |\xi_j|},
\eeqs
hence
\beq\label{90}
\left|\sum_{j=1}^d \cosh (2ld w_j)\right|\geq\frac{\sqrt{2}}{4}\sum_{j=1}^d e^{2ld |\xi_j|}>0, \mbox{ for all } w=\xi+i\eta\in U.
\eeq
For $\cosh(l|x|)$, we already proved that is the restriction to $\RR^d\backslash \{0\}$ of the function $\cosh(l\sqrt{w^2})$ which is analytic on $W=\{w=\xi+i\eta\in\CC^d||\xi|>2|\eta|\}$ (see the proof of lemma \ref{27}). Hence $\ds \cosh(l\sqrt{w^2})\left(\sum_{j=1}^d \cosh (2ld w_j)\right)^{-1}$ is analytic on $W\cap U$. We will use the same notations that were used in the proof of lemma \ref{27}. Similarly as there, put $\ds g_k(w)=\sum_{n=0}^{\infty}\frac{k^{2n}(w^2)^n}{(2n)!}$. Then $g_k(w)=\left(e^{k\sqrt{w^2}}+e^{-k\sqrt{w^2}}\right)/2$, for $w\in W_r\cap\RR^d_{\xi}$ and from the uniqueness of analytic continuation and arbitrariness of $r>0$ it follows $g_k(w)=\left(e^{k\sqrt{w^2}}+e^{-k\sqrt{w^2}}\right)/2$ on $W$. Fix $0<r_0<1/(8ld^3)$. Then, for $w\in \overline{B(0,r_0)}$, by (\ref{90}), we have $\ds \left|g_l(w)\left(\sum_{j=1}^d \cosh (2ld w_j)\right)^{-1}\right|\leq C_{r_0}$. Take $r_1>0$ such that $\overline{B(x,2d r_1)}\subseteq\left(\CC^d\backslash\overline{B(0,r_0/16)}\right)\cap W\cap U$, for all $x\in W_{\frac{r_0}{4}}\cap\RR^d_x$. For such $x$, we use Cauchy integral formula to estimate
\beqs
\left|\partial^{\alpha}\left(\frac{\cosh(l\sqrt{x^2})}{\sum_{j=1}^d \cosh (2ld x_j)}\right)\right|\leq \frac{\alpha!}{r_1^{\alpha}}\sup_{|w_1-x_1|\leq r_1,...,|w_d-x_d|\leq r_1}\left|\frac{\cosh(l\sqrt{w^2})}{\sum_{j=1}^d \cosh (2ld w_j)}\right|.
\eeqs
Now, using (\ref{90}), we have
\beqs
\left|\frac{\cosh(l\sqrt{w^2})}{\sum_{j=1}^d \cosh (2ld w_j)}\right|&\leq& \frac{2}{\sqrt{2}} \frac{e^{l\mathrm{Re\,}\sqrt{w^2}}+e^{-l\mathrm{Re\,}\sqrt{w^2}}}{\sum_{j=1}^d e^{2ld |\xi_j|}}\leq
\frac{4e^{l\sqrt[4]{\left(|\xi|^2-|\eta|^2\right)^2+4(\xi\eta)^2}}}{\sum_{j=1}^d e^{2ld |\xi_j|}}\\
&\leq&\frac{4e^{l\sqrt{|\xi|^2-|\eta|^2+2|\xi\eta|}}}{\sum_{j=1}^d e^{2ld |\xi_j|}}\leq\frac{4 e^{2l|\xi|}}
{\sum_{j=1}^d e^{2ld |\xi_j|}}\leq\frac{8 \cosh(2l|\xi|)}{\sum_{j=1}^d e^{2ld |\xi_j|}}\leq C',
\eeqs
where the last inequality follows from (\ref{85}). Hence, for $x\in W_{\frac{r_0}{4}}\cap\RR^d_x$ we get
\beqs
\left|\partial^{\alpha}\left(\frac{\cosh(l|x|)}{\sum_{j=1}^d \cosh (2ld x_j)}\right)\right|\leq C'\frac{\alpha!}{r_1^{\alpha}}.
\eeqs
For $x\in (B(0,r_0/2)\cap \RR^d_x)\backslash\{0\}$, if we take $r_2>0$ small enough such that $\overline{B(x,2d r_2)}\subseteq B(0,r_0)$ we have (from Cauchy integral formula)
\beqs
\left|\partial^{\alpha}\left(\frac{\cosh(l\sqrt{x^2})}{\sum_{j=1}^d \cosh (2ld x_j)}\right)\right|&=&\left|\partial^{\alpha}\left(\frac{g_l(x)}{\sum_{j=1}^d \cosh (2ld x_j)}\right)\right|\\
&\leq&\frac{\alpha!}{r_2^{\alpha}}\sup_{|w_1-x_1|\leq r_2,...,|w_d-x_d|\leq r_2}
\left|\frac{g_l(w)}{\sum_{j=1}^d \cosh (2ld w_j)}\right|\leq C_{r_0}\frac{\alpha!}{r_2^{\alpha}}.
\eeqs
Because $\ds \cosh(l|x|)\left(\sum_{j=1}^d \cosh\left(2x^{(j)}x\right)\right)^{-1}$ is in $\mathcal{C}^{\infty}(\RR^d)$ the same inequality will hold for the derivatives in $x=0$. If we take $r=\min\{r_1,r_2\}$ we get that, for $x\in\RR^d$,
\beqs
\left|\partial_x^{\alpha}\left(\frac{\cosh(l|x|)}{\sum_{j=1}^d \cosh\left(2x^{(j)}x\right)}\right)\right|\leq C\frac{\alpha!}{r^{\alpha}}.
\eeqs
Now, it easily follows that $\ds \cosh(l|x|)\left(\sum_{j=1}^d \cosh\left(2x^{(j)}x\right)\right)^{-1}\in\DD_{L^{\infty}}^{*}$. From (\ref{80}), we have
\beq\label{100}
\cosh(l|x|)e^{-\varepsilon\sqrt{1+|x|^2}} e^{s|x|^2}S\in\DD_{L^1}'^{(M_p)}, \mbox{ resp. } \cosh(l|x|)e^{-\varepsilon\sqrt{1+|x|^2}} e^{s|x|^2}S\in\tilde{\DD}_{L^1}'^{\{M_p\}},
\eeq
for every $l>0$ and every $\varepsilon>0$. Let $l>0$ be fixed. By considering the function $e^{\varepsilon\sqrt{1+z^2}}$, which is analytic on the strip $\RR^d+i\{y\in\RR^d||y|<1/4\}$, we obtain the estimates $\ds\left|\partial^{\alpha} e^{\varepsilon\sqrt{1+|x|^2}}\right|\leq \tilde{C}\frac{\alpha !}{\tilde{r}^{|\alpha|}}
e^{2\varepsilon\sqrt{1+|x|^2}}$, for $\tilde{r}<1/(8d)$ and some $\tilde{C}>0$. By this and (\ref{35}), for small enough $r>0$, we have
\beqs
\left|D^{\alpha}\left(\frac{\cosh\left(\frac{l|x|}{2}\right)}{\cosh(l|x|)}e^{\varepsilon\sqrt{1+|x|^2}}\right)\right| &\leq&\sum_{\beta\leq\alpha}{\alpha\choose\beta}
\left|D^{\beta}\left(\frac{\cosh\left(\frac{l|x|}{2}\right)}{\cosh(l|x|)}\right)\right|
\left|D^{\alpha-\beta}e^{\varepsilon\sqrt{1+|x|^2}}\right|\\
&\leq&\sum_{\beta\leq\alpha}{\alpha\choose\beta}C'\frac{\beta!}{r^{|\beta|}}e^{-\frac{c}{2}l|x|}\frac{(\alpha-\beta) !} {r^{|\alpha|-|\beta|}}e^{2\varepsilon\sqrt{1+|x|^2}}\\
&\leq& C'\frac{\alpha!}{r^{|\alpha|}}\sum_{\beta\leq\alpha}{\alpha\choose\beta}e^{-\frac{c}{2}l|x|}e^{2\varepsilon\sqrt{1+|x|^2}}
\leq C''\alpha!\left(\frac{2}{r}\right)^{|\alpha|},
\eeqs
where the last inequality will hold if we take $\varepsilon<cl/4$ and $c$ is the one defined in the proof of lemma \ref{27}. We get that $\ds \frac{\cosh\left(\frac{l}{2}|x|\right)}{\cosh(l|x|)}e^{\varepsilon\sqrt{1+|x|^2}}\in\DD_{L^{\infty}}^{*}$. From this and (\ref{100}) we get $\cosh\left(\frac{l}{2}|x|\right) e^{s|x|^2}S\in\DD_{L^1}'^{(M_p)}$, resp. $\cosh\left(\frac{l}{2}|x|\right) e^{s|x|^2}S\in\tilde{\DD}_{L^1}'^{\{M_p\}}$. From the arbitrariness of $l>0$, we have
\beqs
\cosh(l|x|) e^{s|x|^2}S\in\DD_{L^1}'^{(M_p)}, \mbox{ resp. } \cosh(l|x|) e^{s|x|^2}S\in\tilde{\DD}_{L^1}'^{\{M_p\}}
\eeqs
for all $l>0$. By (\ref{45}), resp. (\ref{46}), we have that $e^{s|x|^2}S\in B^{*}$. Hence $S\in B^{*}_s$.\\
\indent Let us prove $b)$. Let $S\in B^{*}$. Similarly as in the proof of lemma \ref{27}, we can prove that for each fixed $\xi\in\RR^d$ there exists $k_{\xi}>0$ ($k$ depends on $\xi$) such that $\ds\frac{e^{-x\xi}}{\cosh(k_{\xi}|x|)}\in\SSS^{*}\left(\RR^d_x\right)$. Then, for fixed $\xi\in\RR^d$, we have
\beqs
e^{-x\xi}S=\frac{e^{-x\xi}}{\cosh(k_{\xi}|x|)}\cosh(k_{\xi}|x|)S\in\SSS'^*\left(\RR^d_x\right).
\eeqs
Hence, by theorem \ref{t1}, the Laplace transform of $S$ exists and belongs to $A^{*}$. Analogously, for $\varepsilon>0$ and $\xi+i\eta$ fixed, we can find $k>0$ ($k$ depends on $\varepsilon$ and $\xi+i\eta$) such that $\ds\frac{e^{-(\xi+i\eta)x}e^{\varepsilon\sqrt{1+|x|^2}}}{\cosh(k|x|)}\in\SSS^{*}\left(\RR^d_x\right)$. Then
\beqs
e^{-(\xi+i\eta)x}e^{\varepsilon\sqrt{1+|x|^2}}S=\frac{e^{-(\xi+i\eta)x}e^{\varepsilon\sqrt{1+|x|^2}}}{\cosh(k|x|)}
\cosh(k|x|)S\in\DD_{L^1}'^{(M_p)}\left(\RR^d_x\right),
\eeqs
in the $(M_p)$ case and resp. $e^{-(\xi+i\eta)x}e^{\varepsilon\sqrt{1+|x|^2}}S\in \tilde{\DD}_{L^1}'^{\{M_p\}}$ in the $\{M_p\}$ case. By (\ref{25}), we have
\beqs
\mathcal{L}(S)(\xi+i\eta)=\left\langle e^{\varepsilon\sqrt{1+|x|^2}}e^{-(\xi+i\eta)x}S(x),e^{-\varepsilon\sqrt{1+|x|^2}}\right\rangle=
\left\langle e^{-(\xi+i\eta)x}S(x),1_x\right\rangle.
\eeqs
The injectivity is obvious. Let us prove the surjectivity. By theorem \ref{t2}, for $f\in A^{*}$ there exists $T\in\DD'^{*}$ such that $e^{-x\xi}T(x)\in\SSS'^{*}\left(\RR^d_x\right)$, for all $\xi\in \RR^d_{\xi}$ and $\mathcal{L}(T)(\xi+i\eta)=f(\xi+i\eta)$. Because $e^{-x\xi}T(x)\in\SSS'^{*}\left(\RR^d_x\right)$, for all $\xi\in \RR^d$ we obtain that $\cosh(x\xi)T(x)\in \SSS'^*\left(\RR^d_x\right)$ for all $\xi\in\RR^d$. Let $k>0$. By the considerations in the proof of $a)$, if take $x^{(j)}\in\RR^d$, $j=1,...,d$, such that $x^{(j)}_q=0$, for $j\neq q$ and $x^{(j)}_j=kd$, we obtain that $\ds \cosh(k|x|)\left(\sum_{j=1}^d \cosh\left(2x^{(j)}x\right)\right)^{-1}\in\DD_{L^{\infty}}^{*}$. Obviously $\DD_{L^{\infty}}^{*}\subseteq \OO_M^*$. Hence
\beqs
\cosh(k|x|)T(x)=
\cosh(k|x|)\left(\sum_{j=1}^d \cosh\left(2x^{(j)}x\right)\right)^{-1}\sum_{j=1}^d \cosh\left(2x^{(j)}x\right)T(x)\in\SSS'^{*}(\RR^d).
\eeqs
We obtain $T\in B^{*}$ and the surjectivity is proved.\\
\indent Now we will prove $c)$. By $a)$, $S *e^{s|\cdot|^2}$ is well defined for $S\in B^{*}_s$. Let $\psi\in\DD^*$ is such that $0\leq\psi\leq 1$, $\psi(x)=1$ when $|x|\leq 1$ and $\psi(x)=0$ when $|x|>2$. Put $\psi_j(x)=\psi(x/j)$ for $j\in\ZZ_+$. Because the convolution of $S$ and $e^{s|x|^2}$ exists,
\beq\label{120}
\left\langle S* e^{s|\cdot|^2},\varphi\right\rangle=\left\langle \left(\varphi* e^{s|\cdot|^2}\right)S,1\right\rangle=
\lim_{j\rightarrow\infty}\left\langle \left(\varphi* e^{s|\cdot|^2}\right)S,\psi_j\right\rangle,
\eeq
for all $\varphi\in\DD^*$. Fix $j\in\ZZ_+$ and observe that $\left\langle\left(\varphi* e^{s|\cdot|^2}\right)S,\psi_j\right\rangle=\left\langle(\psi_jS)* e^{s|\cdot|^2},\varphi\right\rangle$. Let $l\in\NN$ be so large such that $\supp\psi_j\subseteq\{x\in\RR^d|\psi_l(x)=1\}$. We have\\
$\ds\left\langle \left(\varphi *e^{s|\cdot|^2}\right)S,\psi_j\right\rangle$
\beqs
&=&\left\langle \left(\varphi *e^{s|\cdot|^2}\right)(\xi)(\psi_jS)(\xi),\psi_l(\xi)\right\rangle
=\left\langle e^{s|\xi|^2}\int_{\RR^d}\varphi(x) e^{s|x|^2-2sx\xi}dx(\psi_jS)(\xi),\psi_l(\xi)\right\rangle\\
&=&\left\langle e^{s|\xi|^2}e^{s|x|^2-2sx\xi}(\psi_jS)(\xi),\psi_l(\xi)\varphi(x)\right\rangle
=\left\langle e^{s|x|^2}\left\langle e^{s|\xi|^2}e^{-2sx\xi}(\psi_jS)(\xi),\psi_l(\xi)\right\rangle,
\varphi(x)\right\rangle\\
&=&\left\langle e^{s|x|^2}\left\langle e^{s|\xi|^2}e^{-2sx\xi}S(\xi),\psi_j(\xi)\right\rangle,
\varphi(x)\right\rangle,
\eeqs
where the third and the fourth equality follow from theorem 2.3 of \cite{Komatsu2}. We obtain $\left\langle(\psi_jS)* e^{s|\cdot|^2},\varphi\right\rangle=\left\langle e^{s|x|^2}\left\langle e^{s|\xi|^2}e^{-2sx\xi}S(\xi),\psi_j(\xi)\right\rangle,\varphi(x)\right\rangle$, for all $\varphi\in\DD^*$ and all $j\in\ZZ_+$. Hence
\beq\label{130}
e^{s|x|^2}\left\langle e^{s|\xi|^2}e^{-2sx\xi}S(\xi),\psi_j(\xi)\right\rangle=\left((\psi_jS) *e^{s|\cdot|^2}\right)(x)
\eeq
in $\DD'^*\left(\RR^d_x\right)$, for all $j\in\ZZ_+$. Because $\left\langle e^{s|\xi|^2}e^{-2sx\xi}S(\xi),\psi_j(\xi)\right\rangle=\left\langle \psi_j(\xi)S(\xi),e^{s|\xi|^2}e^{-2sx\xi}\right\rangle$, for every fixed $x\in\RR^d$, theorem 3.10 of \cite{Komatsu3} implies that the left hand side of (\ref{130}) is an element of $\EE^*\left(\RR^d_x\right)$. By (\ref{120}), the right hand side of (\ref{130}) tends to $S*e^{s|\cdot|^2}$ in $\DD'^*$. Because $S\in B^*_s$, $e^{s|\cdot|^2}S\in B^*$ and by $b)$, for each fixed $x,y\in\RR^d$, $e^{-(x+iy)\,\cdot} e^{s|\cdot|^2}S\in\DD_{L^1}'^{(M_p)}$, resp. $e^{-(x+iy)\,\cdot}e^{s|\cdot|^2}S\in\tilde{\DD}_{L^1}'^{\{M_p\}}$, the Laplace transform of $e^{s|\cdot|^2}S$ exists and $\mathcal{L}\left(e^{s|\cdot|^2}S\right)(2sx)=\left\langle e^{s|\xi|^2}e^{-2sx\xi}S(\xi), 1_{\xi}\right\rangle$, for every fixed $x\in\RR^d$. So, the right hand side of (\ref{130}) tends to $e^{s|x|^2}\mathcal{L}\left(e^{s|\cdot|^2}S\right)(2sx)$ pointwise. We will prove that the convergence holds in $\DD'^*$. Let $K$ be a fixed compact subset of $\RR^d$. With similar technic as in the proof of lemma \ref{27}, we can find large enough $k>0$ ($k$ depends on $K$) such that $e^{-2sx\xi}\left(\cosh(k|\xi|)\right)^{-1}\in\SSS^*\left(\RR^d_{\xi}\right)$, for each $x\in K$ and the set $\left\{e^{-2sx\,\cdot}\left(\cosh(k|\cdot|)\right)^{-1}\in\SSS^*\left(\RR^d_{\xi}\right)\big| x\in K\right\}$ is bounded subset of $\SSS^*\left(\RR^d_{\xi}\right)$. Because $S\in B^*_s$, $\cosh(k|\cdot|)e^{s|\cdot|^2}S\in\SSS'^*$. Hence
\beqs
\left\langle e^{s|\xi|^2}e^{-2sx\xi}S(\xi),\psi_j(\xi)\right\rangle&=&\left\langle e^{s|\xi|^2}e^{-2sx\xi}\left(\cosh(k|\xi|)\right)^{-1}\cosh(k|\xi|)S(\xi),\psi_j(\xi)\right\rangle\\
&=&\left\langle e^{s|\xi|^2}\cosh(k|\xi|)S(\xi),e^{-2sx\xi}\left(\cosh(k|\xi|)\right)^{-1}\psi_j(\xi)\right\rangle.
\eeqs
By the way we defined $\psi_j$, one easily verifies that $\left\{e^{-2sx\,\cdot}\left(\cosh(k|\cdot|)\right)^{-1}\psi_j(\cdot)\big|x\in K,\, j\in\ZZ_+\right\}$ is a bounded subset of $\SSS^*\left(\RR^d_{\xi}\right)$. From this it follows that there exists $C_K>0$ ($C_K$ depends on $K$) such that $\left|e^{s|x|^2}\left\langle e^{s|\xi|^2}e^{-2sx\xi}S(\xi),\psi_j(\xi)\right\rangle\right|\leq C_K$, for all $x\in K$, $j\in\ZZ_+$. Because $e^{s|x|^2}\left\langle e^{s|\xi|^2}e^{-2sx\xi}S(\xi),\psi_j(\xi)\right\rangle$ tends to $e^{s|x|^2}\mathcal{L}\left(e^{s|\cdot|^2}S\right)(2sx)$ pointwise, by the above, the convergence also holds in $\DD'^*\left(\RR^d_x\right)$. Hence, we obtain $e^{s|x|^2}\mathcal{L}\left(e^{s|\cdot|^2}S\right)(2sx)= \left(S*e^{s|\cdot|^2}\right)(x)$. Now, $b)$ implies $S *e^{s|\cdot|^2}\in A^{*}_s$. The bijectivity of $S\mapsto S *e^{s|\cdot|^2}$ follows from the bijectivity of $\mathcal{L}:B^{*}\longrightarrow A^{*}$.
\end{proof}

\section{A new class of Anti-Wick operators}

Theorem \ref{ter}, along with (\ref{247}), allows us to define Anti-Wick operators $A_a:\DD^*\left(\RR^d\right)\longrightarrow\DD'^*\left(\RR^d\right)$, when $a$ is not necessary in $\SSS'^*\left(\RR^{2d}\right)$. If $a\in B^*_{-1}$ (and only then) $b(x,\xi)=\pi^{-d}\left(a(\cdot,\cdot)*e^{-|\cdot|^2-|\cdot|^2}\right)(x,\xi)$ exists and is an element of $A^*_{-1}$. If this $b$ is such that, for every $\chi\in\DD^*\left(\RR^{2d}\right)$ the integral
\beq\label{140}
\frac{1}{(2\pi)^d}\int_{\RR^d}\int_{\RR^d}\int_{\RR^d}e^{i(x-y)\xi}b\left(\frac{x+y}{2},\xi\right)\chi(x,y)dxdyd\xi
\eeq
is well defined as oscillatory integral and $\langle K_b,\chi\rangle$ defined as the above integral is well defined ultradistributions, then the operator associated to that kernel (see theorem 2.3 of \cite{Komatsu2}) $\varphi\mapsto \langle K_b(x,y),\varphi(y)\rangle$, $\DD^*\left(\RR^d\right)\longrightarrow\DD'^*\left(\RR^d\right)$, can be called the Anti-Wick operator with symbol $a$ (because of proposition \ref{245}, this is appropriate generalization of Anti-Wick operators). The next theorem gives an example of such $b$.

\begin{theorem}\label{trb}
If $a\in B^*_{-1}$ is such that $b$, given by (\ref{247}), satisfies the following condition: for every $K\subset\subset\RR^d_x$ there exists $\tilde{r}>0$ such that there exist $m,C_1>0$, resp. there exist $C_1>0$ and $(k_p)\in\mathfrak{R}$, (in both cases $C_1$ and $m$, resp. $C_1$ and $(k_p)$ depend on $K$) such that
\beq\label{145}
\left|b(x+i\eta,\xi)\right|\leq C_1 e^{M(m|\xi|)}, \mbox{ resp. } \left|b(x+i\eta,\xi)\right|\leq C_1 e^{N_{k_p}(|\xi|)},\, x\in K, |\eta|<\tilde{r}, \xi\in\RR^d,
\eeq
then (\ref{140}) is oscillatory integral and $K_b$ defined by (\ref{140}) is well defined ultradistribution.
\end{theorem}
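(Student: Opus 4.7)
The plan is to convert the triple integral (\ref{140}) into an absolutely convergent double integral by performing a partial Fourier transform in $v = x-y$, and then to combine the Paley--Wiener decay of $\hat{\chi}$ with the growth bound (\ref{145}) on $b$ to show convergence and continuity.

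First I would make the linear change of variables $u = (x+y)/2$, $v = x-y$ (Jacobian $1$), rewriting the integrand of (\ref{140}) as $e^{iv\xi}\,b(u,\xi)\,\tilde\chi(u,v)$, where $\tilde\chi(u,v) := \chi(u+v/2,\,u-v/2) \in \DD^*(\RR^{2d})$ has compact support, say contained in $K_1 \times K_2$. To make sense of the integral I would truncate with $\psi_R(\xi) = \psi(\xi/R)$ for some $\psi \in \DD^*(\RR^d)$ equal to $1$ near $0$: the truncated integrand is compactly supported in all three variables, so Fubini applies and the $v$-integration produces
\begin{equation*}
\Phi_R(u,\xi) \;:=\; \psi_R(\xi)\int_{\RR^d} e^{iv\xi}\tilde\chi(u,v)\,dv \;\longrightarrow\; \Phi(u,\xi) \quad\text{pointwise as }R\to\infty.
\end{equation*}
The crucial ingredient is the Paley--Wiener theorem applied to the partial Fourier transform $\Phi(u,\xi)$ in $v$: uniformly in $u \in K_1$, one has the decay $|\Phi(u,\xi)| \leq C_{k}\,e^{-M(k|\xi|)}$ for every $k > 0$ in the Beurling case, respectively $|\Phi(u,\xi)| \leq C_{(t_p)}\,e^{-N_{t_p}(|\xi|)}$ for every $(t_p) \in \mathfrak{R}$ in the Roumieu case, with constants controlled by seminorms of $\tilde\chi$ (hence of $\chi$).

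Next, combining with (\ref{145}), which yields $|b(u,\xi)| \leq C_1 e^{M(m|\xi|)}$ (resp.\ $\leq C_1 e^{N_{k_p}(|\xi|)}$) for $u \in K_1$, I would choose the Paley--Wiener parameter to dominate the growth of $b$: in the Beurling case, any $k$ with $k \gg mH$ suffices by the usual properties of $M$; in the Roumieu case, I would pick $(t_p) \in \mathfrak{R}$ growing sufficiently slowly (using Lemma~\ref{pst} to replace $(k_p)$ by a monotonized variant and then taking $t_p \leq k_p$ so that $N_{t_p}$ exceeds $N_{k_p}$ by enough to gain integrability). This yields an estimate
\begin{equation*}
\int_{K_1}\!\!\int_{\RR^d} |b(u,\xi)\,\Phi(u,\xi)|\,d\xi\,du \;\leq\; C\cdot p(\chi)
\end{equation*}
for a continuous seminorm $p$ on $\DD^*_{\mathrm{supp}\chi}(\RR^{2d})$. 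I would then set
\begin{equation*}
\langle K_b,\chi\rangle \;:=\; \frac{1}{(2\pi)^d}\int_{K_1}\!\!\int_{\RR^d} b(u,\xi)\,\Phi(u,\xi)\,d\xi\,du,
\end{equation*}
and dominated convergence (applied to the truncated integrals with $\Phi_R \to \Phi$) shows that the regularizations converge to this value independently of $\psi$, which identifies (\ref{140}) as an oscillatory integral with value $\langle K_b,\chi\rangle$. Continuity of $\chi \mapsto \langle K_b,\chi\rangle$ on each $\DD^*_{K}(\RR^{2d})$ is immediate from the estimate, so $K_b \in \DD'^*(\RR^{2d})$.

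The main obstacle is the Roumieu case: given the fixed sequence $(k_p)$ coming from (\ref{145}), one must exhibit an $(t_p) \in \mathfrak{R}$ whose associated function $N_{t_p}$ dominates $N_{k_p}$ by enough extra growth at infinity (at least like $(d+1)\log|\xi|$) to make the $\xi$-integral converge; this requires a careful comparison argument for associated functions in the spirit of Lemma~\ref{pst}, rather than a merely constant gap. Everything else is the standard oscillatory-integral/Paley--Wiener machinery transposed to the ultradifferentiable framework.
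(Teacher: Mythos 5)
Your approach is correct and genuinely different in flavor from the paper's. You convert (\ref{140}) into an absolutely convergent double integral by the linear change of variables $u=(x+y)/2$, $v=x-y$, performing the $v$-integration to obtain the partial Fourier transform $\Phi(u,\xi)$ of $\tilde\chi(u,\cdot)$, and then invoking the Paley--Wiener-type decay $|\Phi(u,\xi)|\leq C_k e^{-M(k|\xi|)}$ (resp. $\leq C_{(t_p)} e^{-N_{t_p}(|\xi|)}$) against the growth bound (\ref{145}) to obtain integrability; continuity of $\chi\mapsto\langle K_b,\chi\rangle$ and independence of the cutoff $\psi_R$ then follow at once. The paper instead keeps the original variables and gains the $\xi$-decay by integrating by parts with a constructed ultradifferential operator $P_l(D_y)$ from Proposition~\ref{orn}, producing the weight $1/|P_l(\xi)|\leq Ce^{-M(r|\xi|)}$; it then works locally over bounded open sets $U$ and patches via the sheaf property of $\DD'^*$, verifying along the way that the definition is independent of the particular $P_l$ and $\psi$. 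Your route buys a genuinely global argument (no sheaf patching, no independence-of-$P_l$ verification) and replaces the construction of $P_l$ by the off-the-shelf Paley--Wiener estimate; the paper's route keeps the boundedness verification more mechanical since the decay factor $1/P_l(\xi)$ is explicit. Concerning the Roumieu ``obstacle'' you flag: it is real but surmountable exactly as you sketch, and the paper faces the identical issue without spelling it out. Use Lemma~\ref{pst} to replace $(k_p)$ by $(k'_p)\leq(k_p)$ so that $N'_p=M_p\prod k'_j$ satisfies $(M.2)$ with some constant $H'$; then $2N_{k'_p}(\rho)\leq N_{k'_p}(H'\rho)+\log c$, so taking $t_p=k'_p/H'$ gives $N_{t_p}(\rho)=N_{k'_p}(H'\rho)\geq 2N_{k'_p}(\rho)-\log c\geq 2N_{k_p}(\rho)-\log c$, whence $e^{N_{k_p}(|\xi|)-N_{t_p}(|\xi|)}\leq c\,e^{-N_{k_p}(|\xi|)}$, and $e^{-N_{k_p}}$ decays faster than $|\xi|^{-(d+1)}$, giving integrability. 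Alternatively, you can bypass $(t_p)$ altogether in the Roumieu case by using the form $|\Phi(u,\xi)|\leq Ce^{-M(|\xi|/h)}$ for some $h>0$, and the inequality $N_{k_p}(\rho)\leq M(c\rho)$ for $\rho$ large (any $c>0$, stated in the Preliminaries), taking $c$ small relative to $1/h$.
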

\begin{proof} Under the conditions in the theorem, Cauchy integral formula yields $\left|D^{\alpha}_x b(x,\xi)\right|\leq C \alpha!/r_1^{|\alpha|}e^{M(m|\xi|)}$, resp. $\left|D^{\alpha}_x b(x,\xi)\right|\leq C \alpha!/r_1^{|\alpha|}e^{N_{k_p}(|\xi|)}$, for all $x\in K$, $\xi\in\RR^d$ ($r_1$ and $C$ depend on $K$). Let $U$ be an arbitrary bounded open subset of $\RR^{2d}$. Then $V=\left\{t\in\RR^d|t=(x+y)/2,\, (x,y)\in U\right\}$ is a bounded set in $\RR^d$, hence $K=\overline{V}$ is compact set. For this $K$, let $m$, resp. $(k_p)$ be as in (\ref{145}). Take $P_l$, resp. $P_{l_p}$, as in proposition \ref{orn}, such that $\left|P_l(\xi)\right|\geq C_2 e^{M(r|\xi|)}$, resp. $\left|P_{l_p}(\xi)\right|\geq C_2 e^{N_{r_p}(\xi)}$, for some $C_2>0$, such that $\ds\int_{\RR^d}e^{M(m|\xi|)}e^{-M(r|\xi|)}d\xi<\infty$, resp. $\ds\int_{\RR^d}e^{N_{k_p}(|\xi|)}e^{-N_{r_p}(|\xi|)}d\xi<\infty$. We can define $K_{b,U}$ as
\beqs
\langle K_{b,U},\chi\rangle=\frac{1}{(2\pi)^d}\int_{\RR^{3d}}\frac{e^{i(x-y)\xi}}{P_l(\xi)}P_l(D_y) \left(b\left(\frac{x+y}{2},\xi\right)\chi(x,y)\right)dxdyd\xi,\,\chi\in\DD^*(U),
\eeqs
in the $(M_p)$ case, resp. the same but with $P_{l_p}$ in place of $P_l$ in the $\{M_p\}$ case and then one easily checks that $K_{b,U}\in\DD'^*(U)$. Moreover, if $\psi\in \DD^*\left(\RR^d\right)$ is such that $\psi(\xi)=1$ in a neighborhood of $0$, for $\delta>0$, we can define $K_{b,U,\psi,\delta}\in\DD'^*(U)$ as
\beqs
\langle K_{b,U,\psi,\delta},\chi\rangle=\frac{1}{(2\pi)^d}\int_{\RR^{3d}}e^{i(x-y)\xi}\psi(\delta\xi) b\left(\frac{x+y}{2},\xi\right)\chi(x,y)dxdyd\xi.
\eeqs
Then $K_{b,U,\psi,\delta}\longrightarrow K_{b,U}$, when $\delta\longrightarrow 0^+$, in $\DD'^*(U)$. Combining these results, we obtain that the definition of $K_{b,U}$ does not depend on $P_l$ resp. $P_{l_p}$, when these are appropriately chosen (see the above discussion) and on the choice of $\psi$ with the above properties. Moreover, when $U_1$ and $U_2$ are two bounded open sets in $\RR^{2d}$ with nonempty intersection, it follows that $K_{b,U_1}=K_{b,U_1\cup U_1}=K_{b,U_2}$ in $\DD'^*(U_1\cap U_2)$. Because $\DD'^*$ is a sheaf (see \cite{Komatsu1}), $K_b$ can be defined as an element of $\DD'^*\left(\RR^{2d}\right)$ as the oscillatory integral (\ref{140}).
\end{proof}

\begin{example} Interesting such symbols $a$ are given by $e^{l|x|^2}P(\xi)$, where $l<1$ and $P(\xi)$ is an ultrapolynomial of class *. In this case, obviously $a\in B^*_{-1}$. Moreover
\beqs
b(x,\xi)&=&\frac{1}{\pi^d}e^{-|x|^2-|\xi|^2}\mathcal{L}\left(e^{-|\cdot|^2-|\cdot|^2}a(\cdot,\cdot)\right)(-2x,-2\xi)\\
&=& \frac{1}{\pi^d}\left(\frac{\pi}{1-l}\right)^{d/2}e^{l|x|^2/(1-l)}\int_{\RR^d}e^{-|\eta|^2} P(\xi-\eta)d\eta
\eeqs
In the $(M_p)$ case, there exist $m,C_1>0$ such that $|P(\xi-\eta)|\leq C_1 e^{M(m|\xi|)}e^{M(m|\eta|)}$, resp. in the $\{M_p\}$ case, there exist $C_1>0$ and $(k_p)\in\mathfrak{R}$, such that $|P(\xi-\eta)|\leq C_1 e^{N_{k_p}(|\xi|)}e^{N_{k_p}(|\eta|)}$ (in the $(M_p)$ case this estimate follows from proposition 4.5 of \cite{Komatsu1}, in the $\{M_p\}$ case the estimate easily follows by combining proposition 4.5 of \cite{Komatsu1} and lemma 3.4 of \cite{Komatsu3}). Hence, $b$ satisfies the conditions in the above theorem and $b^w$ can be the defined as the operator corresponding to the kernel $K_b$ defined as the oscillatory integral (\ref{140}).
\end{example}



\end{document}